\setlist[enumerate]{label=\textup{(\arabic*)}}
\newcommand{\msc}[1]{\href{https://zbmath.org/classification/?q=#1}{#1}}
\crefname{main}	{Theorem}		{Theorems}
\crefname{thm}	{Theorem}		{Theorems}
\crefname{lem}	{Lemma}			{Lemmas}
\crefname{prop}	{Proposition}	{Propositions}
\crefname{dfn}	{Definition}	{Definitions}
\crefname{fig}	{Figure}		{Figures}
\crefname{tbl}	{Table}			{Tables}
\crefname{rmk}	{Remark}		{Remarks}
\crefname{exm}	{Example}		{Examples}
\theoremstyle{plain}
\newtheorem{main} {Theorem}
\newtheorem{thm}	{Theorem} [section]
\newtheorem{prop}	[thm] {Proposition}
\newtheorem{cor}	[thm] {Corollary}
\newtheorem{lem}	[thm] {Lemma}
\theoremstyle{definition}
\newtheorem{dfn}	[thm] {Definition}
\newtheorem{rmk}	[thm] {Remark}
\newtheorem*{thx}{Acknowledgements}
\numberwithin{equation}{section}
\newcommand{\F}{\mathbb{F}}
\newcommand{\Dih}{\mathrm{D}}
\newcommand{\Quat}{\mathrm{Q}}
\newcommand{\Semi}{\mathrm{S}}
\newcommand{\aug}{\varepsilon_N}
\newcommand{\II}{\Delta}
\newcommand{\FF}{F}
\DeclarePairedDelimiterX\set[1]\lbrace\rbrace{\,\def\given{\mid}#1\,} 
\DeclarePairedDelimiterX\gen[1]\langle\rangle{\,\def\given{\mid}#1\,} 
\newcommand{\Ideal}{\Delta}
\newcommand{\Comm}{\gamma}
\newcommand{\Agemo}{\mho}
\newcommand{\Zeta}{\mathrm{Z}}
\DeclareMathOperator{\Ext}{Ext}
\DeclareMathOperator{\Soc}{\Psi}
\DeclareMathOperator{\Frat}{\Phi}
\renewcommand{\dim}[1]{\operatorname{dim} #1}
\newcommand{\codim}[1]{\operatorname{codim} #1}
\newcommand{\cohomology}{\operatorname{H}^*}
\DeclareMathOperator{\SG}{\textup{\texttt{SG}}}
\DeclareMathOperator{\cc}{cc}
\DeclareMathOperator{\dg}{d}
\newenvironment{example}[1][]{\refstepcounter{thm}\par\medskip
   \noindent \textbf{Example~\thethm. #1} \rmfamily}{\medskip}
\begin{document}

\title[Invariants and a reduction for the MIP]{Abelian invariants and a reduction theorem for the modular isomorphism problem}

\author[L.~Margolis]{Leo Margolis}
\address[Leo Margolis]{Institudo de Ciencias Matem\'aticas, C/ Nicolas Cabrera 13, 28049 Madrid, Spain.}
\email{leo.margolis@icmat.es}

\author[T.~Sakurai]{Taro Sakurai}
\address[Taro Sakurai]{Department of Mathematics and Informatics, Graduate School of Science, Chiba University, 1-33, Yayoi-cho, Inage-ku, Chiba-shi, Chiba, 263-8522, Japan.}
\email{tsakurai@math.s.chiba-u.ac.jp}

\author[M.~Stanojkovski]{Mima Stanojkovski}
\address[Mima Stanojkovski]{ 
Universit\`a di Trento, Dipartimento di Matematica, via Sommarive 14, 38123 Trento (TN), Italy.
}
\email{mima.stanojkovski@unitn.it}

\thanks{
Leo Margolis acknowledges financial support from the Spanish Ministry of Science and Innovation, through the “Severo Ochoa Programme for Centres of Excellence in R\&D” (CEX2019-000904-S). 
Mima Stanojkovski was supported by the Deutsche Forschungsgemeinschaft (DFG, German Research Foundation) – Project-ID 286237555 – TRR 195.}

\subjclass[2020]{\msc{16S34} (\msc{20C05}, \msc{20D15})}

\keywords{Abelian invariant, modular group algebra, modular isomorphism problem, power structure, reduction}

\date{\today}

\begin{abstract}
 We show that elementary abelian direct factors can be disregarded in the study of the modular isomorphism problem. Moreover, we obtain four new series of abelian invariants of the group base in the modular group algebra of a finite $p$-group. Finally, we apply our results to new classes of groups.
\end{abstract}

\maketitle

\section*{Introduction}

Given a field $\FF$ of positive characteristic $p$ and a finite $p$-group $G$,
the modular group algebra $\FF G$ of $G$ over $\FF$ plays a  fundamental role in studying linear representations of $G$ over $\FF$.
The interplay between the group structure of $G$ and the algebra structure of $\FF G$ has been thoroughly studied and is yet still not completely understood.
It is obvious that isomorphic finite $p$-groups $G \cong H$ define isomorphic finite-dimensional algebras $\FF G \cong \FF H$.
The \emph{modular isomorphism problem} (MIP) asks whether the converse holds. In symbols, this reads
\[
\FF G\cong \FF H \ \Longrightarrow \ G\cong H?
\]
Over the decades, the last question has been positively answered
for  finite $p$-groups of small orders and for special classes of them (we refer to \cite{San85,HS06, EK11} for an overview of most known results, while more recent contributions are \cite{BK19, Sak20, MM20, BdR20, MS21}).
However, a quite recent breakthrough \cite{GLMdR22} found the first counterexample to the modular isomorphism problem for the case $p = 2$.
Having said that, the modular isomorphism problem is still open for odd primes and it is far from being solved in general. For example, as of today, no structural reduction theorems are known.

In this article we prove the following result reducing the modular isomorphism problem to finite $p$-groups free of elementary abelian direct factors.
\begin{main}\label{main:reduction}
Let $\FF$ be a field of positive characteristic $p$ and  let $G$ and $H$ be finite $p$-groups. Let, moreover, $E$ be a finite elementary abelian $p$-group.
Then an algebra isomorphism $\FF[E \times G]\cong \FF[E \times H]$ implies an algebra isomorphism $\FF G \cong \FF H$,
in symbols
\begin{equation*}
	\FF[E \times G] \cong \FF[E \times H] \implies \FF G \cong \FF H.
\end{equation*}
\end{main}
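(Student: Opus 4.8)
The plan is to reduce to the case $E = C_p$ and then to recover $\FF G$ intrinsically as a quotient of $\FF[C_p\times G]$ by a principal ideal. First I would argue by induction on the rank of $E$: writing $E = C_p\times E'$ and using $\FF[E\times G] = \FF[C_p\times(E'\times G)]$, the problem reduces to the single statement that $\FF[C_p\times P]\cong\FF[C_p\times Q]$ implies $\FF P\cong\FF Q$ for all finite $p$-groups $P,Q$, applied with $P = E'\times G$ and $Q = E'\times H$. For the recovery, let $A = \FF[C_p\times G]$, let $t_0$ generate the cyclic factor $C_p$, and set $t = t_0 - 1$. Then $t$ is central, satisfies $t^p = 0$ and $t\notin J(A)^2$, where $J$ denotes the Jacobson radical (the augmentation ideal), and the projection $C_p\times G\to G$ induces an isomorphism $A/tA\cong\FF G$.

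Now suppose $\phi\colon A\to B$ is an algebra isomorphism, with $B = \FF[C_p\times H]$, and set $z = \phi(t)$. Since $\phi$ preserves the centre, the radical and its powers, the element $z$ is central with $z^p = 0$ and $z\notin J(B)^2$; moreover $B/zB = \phi(A)/\phi(tA)\cong A/tA\cong\FF G$. Thus the entire theorem follows from the following cancellation lemma, which I would isolate and prove on its own: \emph{for every central $z\in B = \FF[C_p\times H]$ with $z^p = 0$ and $z\notin J(B)^2$ one has $B/zB\cong\FF H$.} Indeed, applying this to $z = \phi(t)$ gives $\FF G\cong B/zB\cong\FF H$.

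To prove the lemma I would exploit that $z$ is a legitimate substitute for the variable $s = t_0 - 1$ of the tensor factor $\FF[C_p]\cong\FF[s]/(s^p)$. Writing $B = \FF H\otimes_{\FF}\FF[s]/(s^p)$, consider the $\FF$-algebra endomorphism $\psi$ of $B$ determined by $\psi|_{\FF H} = \mathrm{id}$ and $\psi(s) = z$; this is well defined precisely because $z$ is central and $z^p = 0$. On $J/J^2\cong\langle\bar s\rangle\oplus I_H/I_H^2$ (where $I_H$ is the augmentation ideal of $\FF H$) the induced map fixes the summand $I_H/I_H^2$ and sends $\bar s$ to $\bar z$, so $\psi$ is surjective, hence an automorphism since $B$ is finite dimensional and local, exactly when $\bar z$ has a nonzero $\bar s$-component. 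In that case $\psi$ carries $sB$ onto $zB$ and $B/zB\cong B/sB\cong\FF H$, because $B/sB = \FF[(C_p\times H)/C_p] = \FF H$.

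The main obstacle is the remaining case, where $\bar z$ lies entirely in $I_H/I_H^2$. Writing $z = \sum_{i} c_i s^i$ with $c_i\in\Zeta(\FF H)$, one has $\bar z = \overline{c_0}$, and computing inside the commutative subalgebra $\Zeta(\FF H)\otimes\FF[s]/(s^p)$ gives $z^p = c_0^{\,p}$, so the hypothesis forces $c_0^{\,p} = 0$. Since the linear part of a central element of $\FF H$ can only involve central group elements, $\overline{c_0}$ lies in the image of $\Zeta(H)$ in $H/\Frat(H)$; the heart of the matter is to upgrade this, using $c_0^{\,p} = 0$ (this is exactly where a $p$-power/power-structure analysis of central nilpotents is needed), to the statement that $\overline{c_0}\in\overline{\Omega_1(\Zeta(H))}$, i.e.\ that it is realized by an element of order $p$. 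Granting this, I may pick a central $g$ of order $p$ with $g\notin\Frat(H)$ whose image is a nonzero component of $\overline{c_0}$; then $\langle g\rangle$ is a direct factor, say $H = \langle g\rangle\times H'$, and $B/(g-1)B = \FF[(C_p\times H)/\langle g\rangle] = \FF[C_p\times H']\cong\FF H$. Re-expanding $B = \FF H'\otimes\FF[s]/(s^p)\otimes\FF[x]/(x^p)$ with $x = g - 1$, the element $z$ now has a nonzero $\bar x$-component, so the analogue of $\psi$ (fixing $\FF H'$ and $s$, sending $x\mapsto z$) is an automorphism and yields $B/zB\cong B/(g-1)B\cong\FF H$, completing the proof. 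I expect the order-$p$ extraction in this last case to be the only genuinely delicate point.
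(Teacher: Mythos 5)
Your strategy is genuinely different from the paper's, and its skeleton is sound. The paper never cancels one cyclic factor at a time: it fixes maximal elementary abelian direct factors $G=T\times U$ and $H=S\times V$ (\cref{dfn:decomp}, \cref{lem:exists}), proves that any ideal with the right codimension and the right interaction with the powers of the augmentation ideal has quotient isomorphic to $\FF U$ (\cref{lem:char}), and transports such an ideal through an isomorphism using that $\Ideal(\FF \Soc(G))\FF G+\Ideal(\FF G)^2=\Omega_1(\Zeta(\FF G))+\Ideal(\FF G)^2$ is canonical together with the invariance of the relevant codimension (\cref{cor:invariants}). Your route --- induction to $E=C_p$, the identification $A/tA\cong\FF G$, and the automorphism trick in the first case (the subalgebra $C$ generated by $\FF H$ and $z$ satisfies $C+J^2=B$, hence $C=B$ by a Nakayama-type induction, whence $\psi(sB)=zB$) --- is correct as far as it goes, and so is the final reassembly $B/(g-1)B=\FF[C_p\times H']\cong \FF H$. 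If completed, it would be an attractive alternative, since it needs no analogue of \cref{lem:char} and no codimension invariant.

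However, there is a genuine gap, which you flag but do not close: the whole difficulty of the theorem is concentrated in the step you introduce with ``Granting this''. The claim you need --- if $c_0\in\Zeta(\FF H)\cap\Ideal(\FF H)$ satisfies $c_0^p=0$, then $\overline{c_0}$ lies in the $\FF$-span of the image of $\Soc(H)=\Omega_1(\Zeta(H))$ in $\Ideal(\FF H)/\Ideal(\FF H)^2$ --- is true, but it is exactly the content of the paper's \cref{lem:augS}\ref{item:Soc}, whose proof via \cref{lem:powers} is the real computational core of the reduction; asserting it is not proving it. To fill the hole: write $c_0=\sum_{w\in\Zeta(H)}\alpha_w w+u$ with $u\in\Zeta(\FF H)\cap\Comm(\FF H)$, using \cref{lem:center}\ref{item:FZG}; the Frobenius is additive on the commutative algebra $\Zeta(\FF H)$, so $c_0^p=\bigl(\sum_w\alpha_w w\bigr)^p+u^p$; since $u^p$ stays in the ideal $\Zeta(\FF H)\cap\Comm(\FF H)$ while $\bigl(\sum_w\alpha_w w\bigr)^p\in\FF\Zeta(H)$, the directness of the sum in \cref{lem:center}\ref{item:FZG} forces $\bigl(\sum_w\alpha_w w\bigr)^p=0$, and then the coset-representative argument of \cref{lem:powers} (with $m=0$, $n=1$, applied to the abelian group $\Zeta(H)$) gives $\sum_w\alpha_w w\in\Ideal(\FF\Soc(H))\FF\Zeta(H)$; finally $u\in\Comm(\FF H)\subseteq J^2$. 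A second, smaller, repair: after picking $g$, the complement $H'$ must be chosen adapted to $\overline{c_0}$ (take $H'$ to be the preimage of an $\F_p$-hyperplane of $H/\Frat(H)$ avoiding the rational socle component of $\overline{c_0}$ that you lift to $g$). For a careless complement the argument fails: with $H=\langle g_1\rangle\times\langle g_2\rangle$ elementary abelian, $c_0=(g_1-1)+(g_2-1)$, $g=g_1$ and $H'=\langle g_1g_2\rangle$, the $\bar x$-component of $\bar z$ vanishes, so your second automorphism $\psi$ is not surjective and $B/zB\cong B/(g-1)B$ cannot be concluded that way.
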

\cref{main:reduction} is a simplified instance of \cref{thm:reduction} and is proven in \cref{sec:reduction}.
One ingredient in the proof of the reduction theorem is that the isomorphism type of the intersection
of the socle  with the  Frattini subgroup of $G$ is an invariant of $\FF G$.  This last invariant can be derived from the following \cref{main:invariants}; see in particular \cref{cor:invariants}.

Let $\Zeta(G)$ and $\Comm(G)$ denote the center and commutator subgroup of $G$, respectively.
Moreover, for a non-negative integer $n$, set
\[
\Omega_n(G)=\gen{ g\in G \given g^{p^n}=1 } \textup{ and }
\Agemo_n(G)=\gen{ g^{p^n} \given g\in G }.
\]

\begin{main}\label{main:invariants}
Let $\FF$ be a field of positive characteristic $p$ and  let $G$ be a finite $p$-group. Let, moreover, $n$ be a non-negative integer.
	Then the isomorphism types of the following are invariants of $\FF G$:
	\begin{multicols}{2}
		\begin{enumerate}
			\item\label{it:invariants1} $G/\Comm(G)\Omega_n(\Zeta(G))$,
			\item\label{it:invariants2} $\Comm(G)\Omega_n(\Zeta(G))/\Comm(G)$,
			\columnbreak
			\item\label{it:invariants3} $\Zeta(G)\cap\Agemo_n(G)\Comm(G)$,
			\item\label{it:invariants4} $\Zeta(G)/\Zeta(G)\cap\Agemo_n(G)\Comm(G)$.
		\end{enumerate}
	\end{multicols}
\end{main}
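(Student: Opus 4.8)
The plan is to translate each of the four group-theoretic quotients into the isomorphism type of a commutative modular group algebra that can be read off intrinsically from \(\FF G\), and then to invoke the classical solution of the modular isomorphism problem for finite abelian \(p\)-groups (valid over any field of characteristic \(p\)) to turn such an algebra back into a group up to isomorphism. Throughout, let \(\Delta=\operatorname{rad}(\FF G)\) be the augmentation ideal; an algebra isomorphism \(\Theta\colon\FF G\to\FF H\) preserves \(\Delta\), the center \(\Zeta(\FF G)\), and the commutator ideal \([\FF G,\FF G]\,\FF G\), hence induces an isomorphism \(\overline{\Theta}\) of the maximal commutative quotients \(\FF[G/\Comm(G)]\cong\FF[H/\Comm(H)]\). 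The basic observation I would record first is that, writing \(\pi\) for the projection onto this quotient, one has \(\pi(\Zeta(\FF G))=\FF[\,\overline{\Zeta(G)}\,]\), where \(\overline{\Zeta(G)}=\Zeta(G)\Comm(G)/\Comm(G)\): a class sum \(\widehat{C}\) is sent by \(\pi\) to \(|C|\) times the common image of the elements of \(C\), which vanishes in characteristic \(p\) unless \(|C|=1\), i.e.\ unless \(C\) is central. Consequently \(\overline{\Theta}\) carries the intrinsic subalgebra \(\FF[\,\overline{\Zeta(G)}\,]\) onto \(\FF[\,\overline{\Zeta(H)}\,]\).

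Next I would exploit the power (Frobenius) structure of these commutative algebras; possibly after extending scalars to a perfect field, which does not affect the group-theoretic conclusions. In \(\FF[A]\) for a finite abelian \(p\)-group \(A\) one has \((a-1)^{p^n}=a^{p^n}-1\), so the subgroups \(\Omega_n(A)\) and \(\Agemo_n(A)=A^{p^n}\) are encoded by the \(p^n\)-th power map on the augmentation ideal and are therefore invariants of \(\FF[A]\); concretely \(\FF[A^{p^n}]\) and \(\FF[A/A^{p^n}]\) are an intrinsic subalgebra and quotient. Applying this to \(A=G/\Comm(G)\) together with the intrinsic subalgebra \(\FF[\,\overline{\Zeta(G)}\,]\) already yields invariant~(4): it is exactly the image of \(\overline{\Zeta(G)}\) in \(A/A^{p^n}\), namely \(\overline{\Zeta(G)}\,A^{p^n}/A^{p^n}\cong\Zeta(G)/(\Zeta(G)\cap\Agemo_n(G)\Comm(G))\), whose group algebra is the intrinsic image of \(\FF[\,\overline{\Zeta(G)}\,]\) in \(\FF[A/A^{p^n}]\).

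The remaining three invariants are not visible from the pair \((A,\overline{\Zeta(G)})\) alone, because they involve the genuine power structure of \(\Zeta(G)\) rather than that of its image: the image of \(\Omega_n(\Zeta(G))\) in \(A\) can be strictly smaller than \(\Omega_n(\overline{\Zeta(G)})\), and the kernel \(\Zeta(G)\cap\Comm(G)\) of \(\Zeta(G)\twoheadrightarrow\overline{\Zeta(G)}\) is entirely collapsed by \(\pi\) (as already the case \(G=\Quat_8\) illustrates). This is the heart of the matter, and the plan here is to reconstruct, intrinsically from \(\Zeta(\FF G)\), the group algebra \(\FF[\Zeta(G)]\) \emph{together with} its canonical map to \(\FF[G/\Comm(G)]\). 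The tools I would use are the symmetric-algebra structure of \(\FF G\) — which realizes \(\Zeta(\FF G)=[\FF G,\FF G]^{\perp}\) and lets one track the commutator ideal inside the center — combined with the \(p^n\)-th power map on \(\Zeta(\FF G)\) in the spirit of Külshammer's central ideals, which detects the orders of central group elements and their position relative to \(\Comm(G)\). I expect this reconstruction to be the main obstacle: the subgroup \(G\le 1+\Delta\), and with it \(\FF[\Zeta(G)]\), is a priori not determined by the algebra structure, so the genuine work lies in isolating the central group-like part and certifying that \(\Theta\) respects it.

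Granting this reconstruction, the proof concludes uniformly in \(n\). The subgroup \(\Omega_n(\Zeta(G))\le\Zeta(G)\) is intrinsic in \(\FF[\Zeta(G)]\) by the Frobenius identity, so its image under the intrinsic map \(\FF[\Zeta(G)]\to\FF[G/\Comm(G)]\) is an intrinsic subalgebra \(\FF[\,\Comm(G)\Omega_n(\Zeta(G))/\Comm(G)\,]\), giving invariant~(2), and the corresponding quotient algebra gives invariant~(1); dually, the preimage in \(\Zeta(G)\) of \((G/\Comm(G))^{p^n}\) is intrinsic and yields invariants~(3) and~(4). In each case the relevant object is the modular group algebra of the desired quotient, carried by \(\Theta\) to the analogous algebra for \(H\); the abelian case of the modular isomorphism problem then upgrades each algebra isomorphism to the asserted isomorphism of groups, establishing all four series at once.
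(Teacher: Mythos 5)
Your derivation of invariant \ref{it:invariants4} is correct, and it is in fact a pleasantly short route: the projection $\pi\colon\FF G\to\FF[G/\Comm(G)]$ kills all non-central class sums in characteristic $p$, so $\pi(\Zeta(\FF G))=\FF[\Zeta(G)\Comm(G)/\Comm(G)]$ is a canonical subalgebra of the maximal commutative quotient, and its image in the canonical quotient $\FF[A/\Agemo_n(A)]$ with $A=G/\Comm(G)$ is the group algebra of $\Zeta(G)/(\Zeta(G)\cap\Agemo^*_n(G))$; the abelian case of the MIP finishes. This is a genuinely different (and for this one invariant, more economical) route than the paper, which obtains \ref{it:invariants4} as the case $m=0$ of a stronger statement proved inside $\Zeta(\FF G)$ (\cref{thm:ZA*}).

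However, for invariants \ref{it:invariants1}, \ref{it:invariants2} and \ref{it:invariants3} your argument is explicitly conditional on a reconstruction, ``intrinsically from $\Zeta(\FF G)$, of the group algebra $\FF[\Zeta(G)]$ together with its canonical map to $\FF[G/\Comm(G)]$,'' which you yourself flag as the main obstacle and do not carry out; the appeal to the symmetric-algebra structure and to K\"ulshammer-type ideals is a pointer, not a proof. This deferred step is precisely where the paper's technical work lies, and the missing ingredients are concrete. First, by \cref{lem:center}\ref{item:ideal}--\ref{item:FZG} (a classical fact from Sehgal's book), $\Zeta(\FF G)\cap\Comm(\FF G)$ is an \emph{ideal} of $\Zeta(\FF G)$ and $\Zeta(\FF G)=\FF\Zeta(G)\oplus(\Zeta(\FF G)\cap\Comm(\FF G))$; hence the canonical quotient $\Zeta(\FF G)/(\Zeta(\FF G)\cap\Comm(\FF G))$ \emph{is} the sought copy of $\FF\Zeta(G)$, and compatibility with $\pi$ requires computing $\Zeta(\FF G)\cap\Ideal(\FF N)\FF G$ for $N\supseteq\Comm(G)$, which is the paper's \cref{lem:ZnN}. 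Second, your assertion that ``$\Omega_n(\Zeta(G))$ is intrinsic in $\FF[\Zeta(G)]$ by the Frobenius identity'' conflates the group-theoretic subgroup with the algebra-level set $\Omega_n(\FF\Zeta(G))$; the bridge between them is the paper's \cref{lem:powers}, namely that for abelian $A$ the ideal generated by $\Agemo_m(\Omega_n(\FF A))$ equals $\Ideal(\FF\Agemo_m(\Omega_n(A)))\FF A$, and even granting this, what becomes canonical is an ideal and the corresponding quotient algebra, not the subalgebra $\FF[\Omega_n(\Zeta(G))]$ whose ``image'' you invoke. One then recovers the isomorphism types of the subgroups in \ref{it:invariants2} and \ref{it:invariants3} not directly but from the order sequences $(|\Agemo_k(\cdot)|)_{k\ge 0}$ via \cref{prop:type}, which is exactly how \cref{thm:CO*,thm:ZA*} conclude. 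Without these steps the conditional part of your proposal cannot be certified, so as written the proof establishes only invariant \ref{it:invariants4}.
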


\begin{figure}[htbp]
	\includegraphics{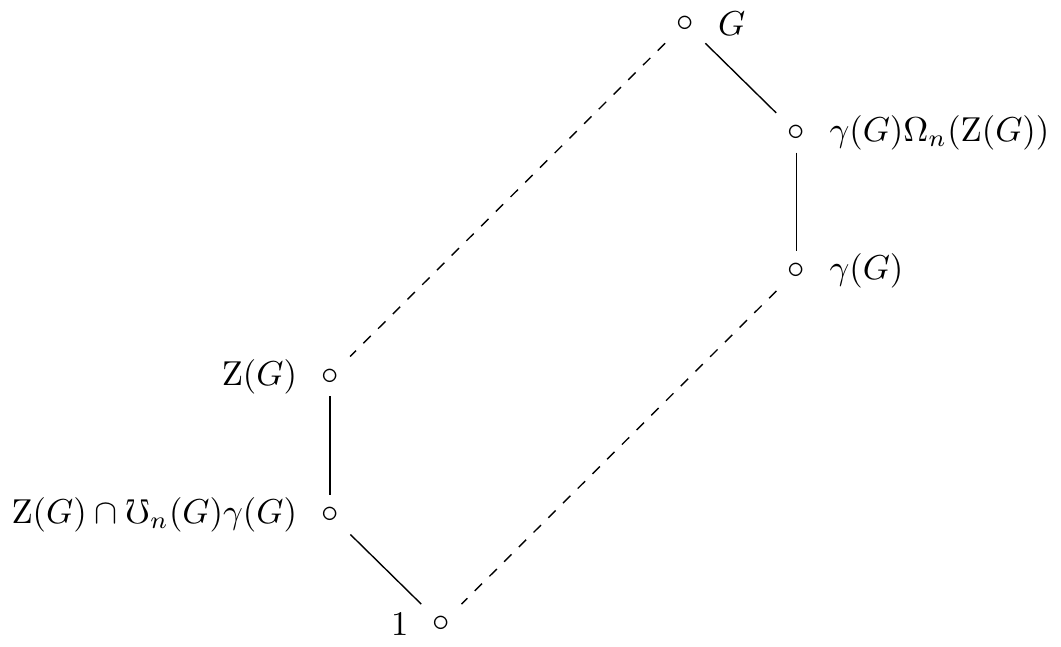}
	\caption{The solid lines depict the new abelian invariants of the modular group algebra in \cref{main:invariants}.}
	\label{fig:duality}
\end{figure}

The search for invariants is, if possible, even more motivated by the discovery of a counterexample to the modular isomorphism problem. Indeed, such invariants provide obstructions to the coexistence of two non-isomorphic group bases in the same modular group algebra.

We apply our results to obtain positive results on the modular isomorphism problem for two new classes of finite $2$-groups both of which share certain properties with the known counterexamples; cf.\ \cref{rmk:counterex}. 

We stress that all our results are independent of the choice of the field, in particular, they are not confined to the case where $\FF=\F_p$ is the field of $p$ elements (note that this is the case for several existing results on the modular isomorphism problem). Moreover, our results would be new also in that framework.

This article is organized as follows.
In \cref{sec:prel} we introduce the notation and recall some standard results.
\cref{sec:invariants} is devoted to establishing four new series of abelian invariants of group bases of modular group algebras, the main results being Theorems~\ref{thm:CO*} and \ref{thm:ZA*}.
\cref{sec:decomp} deals with elementary decompositions of finite $p$-groups from a purely group-theoretic point of view.
The reduction, Theorem~\ref{thm:reduction}, is proved in \cref{sec:reduction} using one of the new abelian invariants and the decomposition from \cref{sec:decomp}.
Two concrete applications to the modular isomorphism problem, which are independent of each other, are given in \cref{sec:appl}.

\begin{thx}
We thank \'Angel del R\'{\i}o for his helpful comments on an early draft of this manuscript. We also thank Max Horn and Eamonn O'Brien for some clarifications regarding the classification of groups of order $32$.
We thank the anonymous referee for their careful reading of this manuscript and the many useful comments. We thank Diego Garc\'{\i}a-Lucas for spotting an imprecision in the original formulation of \cref{lem:filt} and for continuing this story in his recent preprint \cite{Gar22}. 
\end{thx}

\section{Preliminaries and notation}
\label{sec:prel}

Throughout the whole paper, $p$ denotes a prime number, $G$ and $H$ finite $p$-groups and $\FF$ a field of characteristic $p$. 

\subsection{Groups}

Denote by $\Frat(G)$ the Frattini subgroup of $G$ and by $\Soc(G)$ the socle of $G$, i.e.\ the subgroup of $G$ that is generated by central elements of prime order.
The set of conjugacy classes of $G$ is denoted $\cc(G)$ and for $g,h \in G$ we write $[g,h] = g^{-1}h^{-1}gh$ for the commutator of $g$ and $h$. The commutator subgroup of $G$ is denoted by $\Comm(G)$, while $\Zeta(G) $ denotes the center of $G$. Moreover, for every non-negative integer $n$, we define the following crucial players of this paper:
\begin{multicols}{2} \begin{enumerate}
\item   $\Agemo_n(G)=\gen{ g^{p^n} \given g\in G }$,
	\item $\Agemo^*_n(G) = \Agemo_n(G)\Comm(G)$,
	\item $\Omega_n(G)=\gen{ g\in G \given g^{p^n}=1 }$,
	\item $\Omega^*_n(G) = \Omega_n(\Zeta(G))$.
\end{enumerate} \end{multicols}
\noindent
For instance, with this notation, we have that  
$$\Agemo_n(G/\Comm(G)) = \Agemo^*_n(G)/\Comm(G), \ \Frat(G) = \Agemo^*_1(G), \textup{ and } \Soc(G) = \Omega^*_1(G).$$
The following result is a direct consequence of the classification of finite abelian $p$-groups.

\begin{prop}\label{prop:type}
Assume that $G$ and $H$ are abelian. Then $G$ and $H$ are isomorphic if and only if $(|\Agemo_n(G)|)_{n \geq 0}=(|\Agemo_n(H)|)_{n \geq 0}$.
\end{prop}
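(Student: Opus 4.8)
The plan is to exploit the fundamental theorem of finite abelian $p$-groups, which asserts that $G$ decomposes as a direct product of cyclic groups and that its isomorphism type is determined by, and determines, the multiset of orders of these cyclic factors. Encoding this data as the sequence $(a_k)_{k \geq 1}$, where $a_k$ denotes the number of cyclic direct factors of order exactly $p^k$, reduces the statement to showing that the sequences $(a_k)_{k \geq 1}$ and $(\lvert\Agemo_n(G)\rvert)_{n \geq 0}$ determine one another. The forward implication is then immediate, since $\Agemo_n(G)$ is a characteristic subgroup of $G$, so isomorphic groups have $\Agemo_n$-subgroups of equal order for every $n$.

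For the reverse implication I would first compute $\lvert\Agemo_n(G)\rvert$ explicitly from the cyclic decomposition. Since $G$ is abelian, the map $g \mapsto g^{p^n}$ is an endomorphism and $\Agemo_n(G)$ decomposes compatibly with any direct product decomposition of $G$; for a single cyclic factor $C \cong \mathbb{Z}/p^k\mathbb{Z}$ one has $\Agemo_n(C) \cong \mathbb{Z}/p^{\max(k-n,\,0)}\mathbb{Z}$. Writing $d_n = \log_p \lvert\Agemo_n(G)\rvert$, this yields the closed formula
\[
d_n = \sum_{k \geq 1} a_k \, \max(k-n,\,0) = \sum_{k > n} a_k\,(k-n).
\]

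It then remains to invert this relation. Taking a first difference gives $d_n - d_{n+1} = \sum_{k \geq n+1} a_k$, the total number of cyclic factors of order at least $p^{n+1}$, and a second difference isolates a single term:
\[
a_{n+1} = d_n - 2 d_{n+1} + d_{n+2} \qquad \text{for all } n \geq 0.
\]
Thus the sequence $(a_k)_{k \geq 1}$ is recovered from $(\lvert\Agemo_n(G)\rvert)_{n \geq 0}$, which completes the argument. The only point demanding genuine care is the claim that $\Agemo_n$ respects direct products in the abelian setting, so that the per-factor computation may be assembled into the formula for $d_n$; once that is in hand, the remainder is a bookkeeping computation with telescoping differences, and I anticipate no substantive obstacle.
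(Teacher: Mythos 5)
Your proof is correct, and it supplies in full the argument that the paper compresses into a single citation: the paper's entire proof of \cref{prop:type} is a reference to Macdonald \cite[Chapter II, (1.4)]{Mac95}, where the fact is stated in the language of partitions (a finite abelian $p$-group of type $\lambda$ has $|\Agemo_n(G)| = p^{\sum_{k>n}\lambda'_k}$, so the sequence of orders determines the conjugate partition and hence $\lambda$). Your derivation is exactly the combinatorial content behind that citation, made explicit: the per-cyclic-factor computation $\Agemo_n(\mathbb{Z}/p^k\mathbb{Z}) \cong \mathbb{Z}/p^{\max(k-n,0)}\mathbb{Z}$, the resulting formula $d_n = \sum_{k>n} a_k(k-n)$, and the inversion $a_{n+1} = d_n - 2d_{n+1} + d_{n+2}$ by second differences are all correct, as is the reduction of the forward implication to the fact that $\Agemo_n$ is characteristic. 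What your route buys is self-containedness: a reader needs only the structure theorem for finite abelian groups, not Macdonald's formalism; what the citation buys is brevity and a pointer to the standard dictionary between abelian $p$-groups and partitions, which is the natural reference frame for this kind of statement.
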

\begin{proof}
	This readily follows from \cite[Chapter II, (1.4)]{Mac95}.
\end{proof}

\subsection{Algebras}
Let $A$ be a finite-dimensional algebra over $\FF$.
Analogously to the notation for groups, we let $\Comm(A) = \sum_{x, y \in A} \FF(xy -yx)$ denote the commutator subspace of $A$ and $\Zeta(A)$ the center of $A$. 
We use the following notation for a non-negative integer $n$ and a subspace $X$ of $A$:
\begin{multicols}{2} \begin{enumerate}
	\item $\Agemo_n(X) = \set{ x^{p^n} \given x \in X }$,
	\item $\Omega_n(X) = \set{ x \in X \given x^{p^n} = 0 }$.
\end{enumerate} \end{multicols}
\noindent
The codimension of $X$ is $\codim{X} = \dim{A} - \dim{X}$, where the dimensions are taken as vector spaces over $\FF$.

The following fact on power maps can be found in \cite[Chapter 2, Lemma 3.1]{Pas77}.
\begin{lem}
	\label{lem:dream}
	Let $A$ be a finite-dimensional algebra over $\FF$ and $n$ a non-negative integer.
	Then
	\begin{equation*}
		(x + y)^{p^n} \equiv x^{p^n} + y^{p^n}  \bmod \Comm(A)
	\end{equation*}
	 for every pair of elements $x$ and $y$ in $A$.
\end{lem}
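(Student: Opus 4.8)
The plan is to expand the $p^n$-th power combinatorially and exploit a cyclic symmetry, keeping everything at the level of additive congruences since $\Comm(A)$ is a subspace and not an ideal. Writing the product out, $(x+y)^{p^n}$ equals the sum over all $2^{p^n}$ words $w = c_1 c_2 \cdots c_{p^n}$ of length $p^n$ in the two non-commuting letters $c_i \in \{x,y\}$, each word being read as the corresponding product in $A$. The two constant words contribute exactly $x^{p^n}$ and $y^{p^n}$, so it suffices to show that the sum of all remaining, non-constant, words lies in $\Comm(A)$.

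The key observation is that the cyclic group $C = \mathbb{Z}/p^n$ acts on the set of these words by cyclic rotation, and that this action respects congruence modulo $\Comm(A)$. Indeed, if $w = c_1 c_2 \cdots c_{p^n}$ and $w'$ is its rotation $c_2 \cdots c_{p^n} c_1$, then setting $a = c_1$ and $b = c_2 \cdots c_{p^n}$ gives $w - w' = ab - ba \in \Comm(A)$; since congruence modulo the subspace $\Comm(A)$ is an equivalence relation, all words in a single $C$-orbit are mutually congruent modulo $\Comm(A)$.

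Next I would analyze the orbit sizes. The stabilizer of a word is a subgroup of the cyclic $p$-group $C$, so each orbit has size a power of $p$; an orbit has size $1$ exactly when the word is fixed by the rotation by one position, that is, exactly for the two constant words. Hence every non-constant word lies in an orbit $O$ whose size $|O|$ is a positive power of $p$. Since each $w \in O$ is congruent to a fixed representative $w_0$, the orbit sum satisfies $\sum_{w \in O} w \equiv |O|\, w_0 \pmod{\Comm(A)}$, and $|O|\, w_0 = 0$ because $\operatorname{char}\FF = p$. Adding up over all non-constant orbits shows that the total contribution of the non-constant words lies in $\Comm(A)$, which yields $(x+y)^{p^n} \equiv x^{p^n} + y^{p^n} \bmod \Comm(A)$.

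The only genuinely delicate points are the two bookkeeping facts just used: that a cyclic rotation of a product differs from the original by a single commutator (so that $C$-orbits are unions inside a congruence class modulo $\Comm(A)$), and that a non-constant word has orbit size divisible by $p$ (so that its orbit sum dies in characteristic $p$). Both are elementary once the cyclic action is set up, so I expect no real obstacle; the one thing to watch is to never pass to a quotient algebra, since $\Comm(A)$ need not be an ideal. An alternative, slightly longer route would be to first prove the case $n=1$ by the same orbit argument over $\mathbb{Z}/p$ and then induct on $n$ via $(x+y)^{p^n} = \bigl((x+y)^{p^{n-1}}\bigr)^p$, but this forces one to check separately that $\Comm(A)$ is closed under $p$-th powers modulo itself, so I would prefer the direct argument over $\mathbb{Z}/p^n$.
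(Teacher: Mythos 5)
Your proof is correct. Note, however, that the paper does not actually prove this lemma at all: it simply cites Passman \cite[Chapter 2, Lemma 3.1]{Pas77}, so there is no in-paper argument to compare against, and what you have written is a complete, self-contained substitute for that citation. Your cyclic-orbit argument is in fact the classical proof of this statement: expanding $(x+y)^{p^n}$ into the $2^{p^n}$ words in $x$ and $y$, observing that a one-step rotation $c_1c_2\cdots c_{p^n} \mapsto c_2\cdots c_{p^n}c_1$ changes a word by the commutator $ab-ba$ with $a=c_1$, $b=c_2\cdots c_{p^n}$, and then killing each non-constant orbit because its cardinality is a positive power of $p$ and the ground field has characteristic $p$. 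All the delicate points are handled properly: the orbit-stabilizer count over the cyclic $p$-group $\mathbb{Z}/p^n$, the identification of the size-one orbits with the two constant words, and---importantly---the fact that you only ever use that $\Comm(A)$ is a subspace (so congruence modulo it is additive and transitive) and never treat it as an ideal or pass to a quotient algebra. Your closing remark is also sound: doing the case $n=1$ and inducting via $(x+y)^{p^n}=\bigl((x+y)^{p^{n-1}}\bigr)^p$ would require the additional lemma that $u\equiv v \bmod \Comm(A)$ implies $u^p\equiv v^p \bmod \Comm(A)$, so the direct argument over $\mathbb{Z}/p^n$ is the cleaner route.
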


\subsection{Group algebras}

The group $H$ is a \emph{group base} of $\FF G$ if $H$ consists of units in $\FF G$ and $H$ is a basis of $\FF G$.
An object $\mathcal{P}(G)$ associated with $G$ is said to be an \emph{invariant} of $\FF G$
if $\FF G \cong \FF H$ implies that $\mathcal{P}(G)=\mathcal{P}(H)$.
A subset $X(\FF G)$ of $\FF G$ is said to be \emph{canonical} in $\FF G$
if there is a formula $\varphi(x)$ in the language of algebras such that $X(\FF G)$ consists of the elements of $\FF G$ satisfying $\varphi(x)$.
Then every algebra isomorphism $\FF G \to \FF H$ maps $X(\FF G)$ to $X(\FF H)$.
The {\em augmentation ideal} of $\FF G$ is
\begin{equation*}
	\Ideal(\FF G) = \bigoplus_{\substack{g \in G\\ g \neq 1}} \FF(g - 1).
\end{equation*}
It is well-known that the augmentation ideal equals the unique maximal ideal in $\FF G$, cf.\ \cite[Chapter 8, Lemma 1.17]{Pas77}.
In particular, $\Ideal(\FF G)$ is canonical in the modular group algebra. Finally, for a conjugacy class $\kappa$ of $G$, we define the \emph{class sum} of $\kappa$ in $\FF G$ as
\[
	\hat\kappa = \sum_{g \in \kappa} g.
\]
The following facts on augmentation ideals relative to normal subgroups can be found in \cite[Chapter 1, Lemma 1.8]{Pas77}.

\begin{lem}
	\label{lem:quotient}
	Let $N$ be a normal subgroup of $G$.
	Then $\Ideal(\FF N)\FF G$ is the kernel of the natural surjective map $\aug \colon \FF G \rightarrow \FF [G/N]$.
\end{lem}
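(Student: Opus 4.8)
The plan is to prove the asserted equality by the two inclusions $\Ideal(\FF N)\FF G \subseteq \ker\aug$ and $\ker\aug \subseteq \Ideal(\FF N)\FF G$, the first being immediate and the second following from a decomposition of $\FF G$ along the cosets of $N$. Throughout I use that $\aug$ is the algebra homomorphism induced by $g \mapsto gN$ and that it is surjective, as it sends the basis $G$ of $\FF G$ onto the basis $G/N$ of $\FF[G/N]$; in particular $\ker\aug$ is a two-sided ideal of dimension $|G| - |G/N|$.

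For the inclusion $\Ideal(\FF N)\FF G \subseteq \ker\aug$, observe that $nN = N$ is the identity of $G/N$ for every $n \in N$, so $\aug(n - 1) = 0$ and hence $\Ideal(\FF N) \subseteq \ker\aug$. Since $\ker\aug$ is a two-sided ideal, it contains the right ideal $\Ideal(\FF N)\FF G$.

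For the reverse inclusion I would fix a transversal $T$ of $N$ in $G$. Because $N \normal G$, left and right cosets agree, so $G = \bigsqcup_{t \in T} Nt$ simultaneously yields the vector space decomposition $\FF G = \bigoplus_{t \in T} \FF N\, t$ and the fact that $\set{tN \given t \in T}$ is exactly the basis of $\FF[G/N]$. Writing $x \in \ker\aug$ as $x = \sum_{t \in T} u_t\, t$ with $u_t = \sum_{n \in N} a_{t,n}\, n \in \FF N$, normality gives $ntN = tN$, so that $\aug(u_t\, t) = \big(\sum_{n \in N} a_{t,n}\big)\, tN$ and therefore $\aug(x) = \sum_{t \in T} \big(\sum_{n \in N} a_{t,n}\big)\, tN$. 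As the $tN$ are linearly independent, $\aug(x) = 0$ forces $\sum_{n \in N} a_{t,n} = 0$ for each $t$, that is, $u_t \in \Ideal(\FF N)$; hence $x = \sum_{t \in T} u_t\, t \in \Ideal(\FF N)\FF G$.

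The only point requiring genuine care is this last decoupling across cosets, which rests on normality of $N$: it is what lets a single transversal both split $\FF G$ and index the basis of $\FF[G/N]$, and what ensures that $\aug(u_t\, t)$ lands in the one-dimensional space $\FF\, tN$ with coefficient equal to the coefficient sum of $u_t$. Should one prefer to sidestep the explicit coefficient computation, the same decomposition gives $\Ideal(\FF N)\FF G = \bigoplus_{t \in T} \Ideal(\FF N)\, t$, whose dimension is $|T|\,(|N| - 1) = |G| - |G/N| = \dim{\ker\aug}$; combined with the first inclusion, this equality of dimensions already yields the claim.
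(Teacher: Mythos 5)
Your proof is correct. Note, however, that the paper does not actually prove this lemma: it is stated with a pointer to \cite[Chapter 1, Lemma 1.8]{Pas77}, so there is no in-paper argument to compare against. What you have written is precisely the standard argument behind that cited result: the easy inclusion $\Ideal(\FF N)\FF G \subseteq \ker \aug$ from $\aug(n-1)=0$ together with the ideal property of the kernel, and the reverse inclusion via the coset decomposition $\FF G = \bigoplus_{t \in T} \FF N\, t$, where normality of $N$ is what makes $G/N$ a group and guarantees $\aug(u_t\, t) = \big(\sum_{n \in N} a_{t,n}\big) tN$, so that vanishing of $\aug(x)$ decouples into the vanishing of each coefficient sum, i.e.\ $u_t \in \Ideal(\FF N)$. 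Your closing observation that one can instead compare dimensions, $\Ideal(\FF N)\FF G = \bigoplus_{t \in T} \Ideal(\FF N)\, t$ having dimension $|G| - |G/N|$, which equals the dimension of $\ker \aug$ by surjectivity, is a clean alternative finish; either version is a complete and self-contained proof of the lemma, which is arguably preferable to the paper's bare citation if one wants the article to be self-contained.
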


The augmentation ideal relative to the commutator subgroup of $G$ plays a special role in $\FF G$, as emphasized by the following lemma.

\begin{lem}
	\label{lem:commutative}
	Let $I$ be an ideal of $\FF G$.
	Then the  following are equivalent:
	\begin{enumerate}
		\item\label{item:comm1} $\FF G/I$ is commutative.
		\item\label{item:comm2} $I \supseteq \Comm(\FF G)$.
		\item\label{item:comm3} $I \supseteq \Ideal(\FF \Comm(G))$.
	\end{enumerate}
\end{lem}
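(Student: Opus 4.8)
The plan is to prove the three equivalences by first linking the two algebraic conditions \ref{item:comm1} and \ref{item:comm2} directly, and then tying both to the group-theoretic condition \ref{item:comm3} through \cref{lem:quotient}. The equivalence of \ref{item:comm1} and \ref{item:comm2} is immediate from the definition of the commutator subspace: the quotient $\FF G/I$ is commutative precisely when $xy-yx\in I$ for all $x,y\in\FF G$, and since $\Comm(\FF G)$ is by definition the $\FF$-span of the elements $xy-yx$ while $I$ is in particular an $\FF$-subspace, this holds exactly when $I\supseteq\Comm(\FF G)$.

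For the implication \ref{item:comm3}$\Rightarrow$\ref{item:comm1} I would invoke \cref{lem:quotient} with $N=\Comm(G)$. Since $I$ is a two-sided ideal containing $\Ideal(\FF\Comm(G))$, it also contains the two-sided ideal $\Ideal(\FF\Comm(G))\FF G$ generated by it, which by \cref{lem:quotient} is the kernel of the natural surjection $\FF G\to\FF[G/\Comm(G)]$. Hence $\FF G/I$ is a quotient of $\FF[G/\Comm(G)]$; as $G/\Comm(G)$ is abelian, the latter algebra is commutative, and so is any of its quotients.

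The remaining and most delicate direction is \ref{item:comm2}$\Rightarrow$\ref{item:comm3}. The key computation is the identity $gh-hg=hg([g,h]-1)$ for $g,h\in G$, which shows that every element $[g,h]-1$ lies in $I$: indeed $gh-hg\in\Comm(\FF G)\subseteq I$ by \ref{item:comm2}, and multiplying on the left by the unit $(hg)^{-1}$ and using that $I$ is a left ideal yields $[g,h]-1\in I$. I would then argue that the set $\set{a\in G\given a-1\in I}$ is a subgroup of $G$: it contains $1$, it is closed under inverses since $a^{-1}-1=-a^{-1}(a-1)\in I$, and it is closed under products since $ab-1=(a-1)(b-1)+(a-1)+(b-1)\in I$. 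As this subgroup contains all commutators $[g,h]$, it contains $\Comm(G)$, so that $a-1\in I$ for every $a\in\Comm(G)$. Since $\Ideal(\FF\Comm(G))$ is spanned by these elements $a-1$, we conclude $\Ideal(\FF\Comm(G))\subseteq I$, which is \ref{item:comm3}.

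I expect the only genuine obstacle to be the bookkeeping in \ref{item:comm2}$\Rightarrow$\ref{item:comm3}, where one must exploit that $I$ is a bona fide two-sided ideal rather than merely a subspace: this is needed both to pass from $gh-hg\in I$ to $[g,h]-1\in I$ and to verify closure under products in the auxiliary subgroup. Everything else reduces to the definition of $\Comm(\FF G)$ and a single application of \cref{lem:quotient}.
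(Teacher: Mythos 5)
Your proof is correct, and all the computations check out: the identity $gh-hg=hg([g,h]-1)$, the closure properties of the set $\set{a\in G\given a-1\in I}$, and the passage from $I\supseteq\Ideal(\FF\Comm(G))$ to $I\supseteq\Ideal(\FF\Comm(G))\FF G$ before invoking \cref{lem:quotient} are all sound. Your route differs from the paper's in one step. The paper proves \ref{item:comm1}$\Rightarrow$\ref{item:comm2} from the definition of $\Comm(\FF G)$ and deduces \ref{item:comm3}$\Rightarrow$\ref{item:comm1} from \cref{lem:quotient}, exactly as you do; but it disposes of the equivalence \ref{item:comm2}$\Leftrightarrow$\ref{item:comm3} by simply citing \cite[Lemma~3.5]{San85}. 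You instead give a self-contained argument for \ref{item:comm2}$\Rightarrow$\ref{item:comm3}: extracting $[g,h]-1\in I$ from $gh-hg\in I$ by left multiplication by the unit $(hg)^{-1}$, then showing that $\set{a\in G\given a-1\in I}$ is a subgroup of $G$ containing all commutators, hence containing $\Comm(G)$, which spans $\Ideal(\FF\Comm(G))$. This is essentially the content of Sandling's lemma, so what your approach buys is self-containedness and a transparent record of exactly where the hypothesis that $I$ is a two-sided ideal, rather than a mere subspace, is used; what the paper's approach buys is brevity, by outsourcing this step to the standard literature. One minor economy you could make: you prove both directions of \ref{item:comm1}$\Leftrightarrow$\ref{item:comm2}, but only \ref{item:comm1}$\Rightarrow$\ref{item:comm2} is needed, since the converse follows formally from the cycle \ref{item:comm2}$\Rightarrow$\ref{item:comm3}$\Rightarrow$\ref{item:comm1}.
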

\begin{proof}
To see that \ref{item:comm1} implies \ref{item:comm2} note that $\FF G/I$ being commutative implies that $I$ contains all elements of shape $xy -yx$ for elements $x$ and $y$ of $\FF G$. These elements are exactly the generators of $\Comm(\FF G)$. Moreover, \ref{item:comm2} and \ref{item:comm3} are equivalent by \cite[Lemma 3.5]{San85}. Finally, \ref{item:comm3} implies \ref{item:comm1} by Lemma~\ref{lem:quotient}.
\end{proof}

\begin{lem}
	\label{lem:product}
	Let $I$ be an ideal of $\FF G$.
	Let, moreover, $K$ and $L$ be normal subgroups of $G$.
	Then the following are equivalent:
	\begin{enumerate}
		\item\label{item:KxL} $I \supseteq \Ideal(\FF KL)$.
		\item\label{item:K+L} $I \supseteq \Ideal(\FF K)$ and $I \supseteq \Ideal(\FF L)$.
	\end{enumerate}
\end{lem}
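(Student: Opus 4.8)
The plan is to reduce both implications to the single identity
\[
	\Ideal(\FF KL)\FF G = \Ideal(\FF K)\FF G + \Ideal(\FF L)\FF G
\]
for relative augmentation ideals, together with the elementary observation that, for a normal subgroup $N$ of $G$, an ideal $I$ of $\FF G$ contains $\Ideal(\FF N)$ if and only if it contains $\Ideal(\FF N)\FF G$. Indeed, one inclusion is trivial, and the other holds because $I$ is a (right) ideal while $\Ideal(\FF N)\FF G$ is generated as a right ideal by $\Ideal(\FF N)$. Granting the displayed identity, the equivalence of \ref{item:KxL} and \ref{item:K+L} follows at once: the condition $I \supseteq \Ideal(\FF KL)$ is the same as $I \supseteq \Ideal(\FF KL)\FF G$, hence the same as $I$ containing the sum $\Ideal(\FF K)\FF G + \Ideal(\FF L)\FF G$, which holds precisely when $I$ contains both summands, that is, when $I \supseteq \Ideal(\FF K)$ and $I \supseteq \Ideal(\FF L)$.

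To prove the identity, I first recall that $KL$ is a normal subgroup of $G$, so that all three relative augmentation ideals are two-sided ideals of $\FF G$ by \cref{lem:quotient}. The inclusion $\supseteq$ is clear: from $K, L \subseteq KL$ we get $\Ideal(\FF K), \Ideal(\FF L) \subseteq \Ideal(\FF KL)$, and right multiplication by $\FF G$ preserves these inclusions. For the reverse inclusion I would argue on the $\FF$-basis $\set{g - 1 \given g \in KL,\ g \neq 1}$ of $\Ideal(\FF KL)$. Writing such a $g$ as a product $k\ell$ with $k \in K$ and $\ell \in L$, the identity
\[
	k\ell - 1 = (k - 1)\ell + (\ell - 1)
\]
displays $g - 1$ as the sum of an element of $\Ideal(\FF K)\FF G$ and an element of $\Ideal(\FF L)$. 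Hence $\Ideal(\FF KL) \subseteq \Ideal(\FF K)\FF G + \Ideal(\FF L)\FF G$, and multiplying on the right by $\FF G$ (which the right-hand side absorbs, being an ideal) yields $\Ideal(\FF KL)\FF G \subseteq \Ideal(\FF K)\FF G + \Ideal(\FF L)\FF G$, as required.

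There is no serious obstacle here: the only points demanding care are the passage between $\Ideal(\FF N)$ and $\Ideal(\FF N)\FF G$ for an ideal $I$, and the bookkeeping of one-sided versus two-sided ideals, both of which are harmless since normality of $K$, $L$, and $KL$ guarantees that every relative augmentation ideal occurring is two-sided. As an alternative to the generator computation, the identity can also be deduced from \cref{lem:quotient} by factoring the natural surjection $\FF G \to \FF[G/KL]$ through $\FF[G/K]$ and identifying the kernel of the induced map $\FF[G/K] \to \FF[G/KL]$ with the image of $\Ideal(\FF L)\FF G$; I would, however, prefer the direct argument above for its transparency.
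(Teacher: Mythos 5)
Your proposal is correct and rests on essentially the same idea as the paper: the paper's entire proof of the nontrivial implication is the identity $xy - 1 = (x-1)(y-1) + (x-1) + (y-1)$ applied directly inside the ideal $I$, which is the same elementary decomposition trick as your identity $k\ell - 1 = (k-1)\ell + (\ell - 1)$. Your extra layer --- proving the equality $\Ideal(\FF KL)\FF G = \Ideal(\FF K)\FF G + \Ideal(\FF L)\FF G$ and passing between $\Ideal(\FF N)$ and $\Ideal(\FF N)\FF G$ --- is sound (and slightly stronger than needed) but merely repackages the same computation.
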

\begin{proof}
The implication $\ref{item:KxL} \Rightarrow \ref{item:K+L}$ is clear.
	To see that \ref{item:K+L} implies \ref{item:KxL}, note that, for any choice of $x, y \in G$, one has $xy - 1 = (x - 1)(y - 1) + (x - 1) + (y - 1)$.
\end{proof}

\begin{lem}
	\label{lem:center} The following facts hold in $\FF G$:
	\begin{enumerate}
	\item \label{item:KJ2} $\Comm(\FF G) \subseteq \Ideal(\FF G)^2$,\vspace{5pt}
	\item \label{item:Z} $\Zeta(\FF G)
			= \bigoplus_{\kappa \in \cc(G)} \FF \hat\kappa
			= \bigoplus_{z \in \Zeta(G)} \FF z \oplus \bigoplus_{\substack{\kappa \in \cc(G)\\ |\kappa| \neq 1}} \FF \hat\kappa$, \vspace{5pt}
	\item \label{item:ZK} $\Zeta(\FF G) \cap \Comm(\FF G)
			= \bigoplus_{\substack{\kappa \in \cc(G)\\ |\kappa| \neq 1}} \FF \hat\kappa$,
	\item \label{item:ideal} $\Zeta(\FF G) \cap \Comm(\FF G)$ is an ideal of $\Zeta(\FF G)$,
	\item \label{item:FZG} $\Zeta(\FF G) = \FF\Zeta(G) \oplus (\Zeta(\FF G) \cap \Comm(\FF G))$.
	\end{enumerate}
\end{lem}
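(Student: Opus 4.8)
The plan is to handle the five items in order, reducing everything to two elementary facts about how commutators act on the group basis $G$.

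For \ref{item:KJ2}, by bilinearity $\Comm(\FF G)$ is spanned by the elements $gh-hg$ with $g,h\in G$, so it suffices to exhibit each of these in $\Ideal(\FF G)^2$. This is immediate from the identity $(g-1)(h-1)-(h-1)(g-1)=gh-hg$, whose left-hand side lies in $\Ideal(\FF G)^2$ because $g-1$ and $h-1$ lie in $\Ideal(\FF G)$. For \ref{item:Z}, I would use the standard description of the center: writing $a=\sum_{g\in G}a_g g$, the relations $t^{-1}at=a$ for all $t\in G$ amount to $a_g$ being constant along conjugacy classes, whence $\Zeta(\FF G)=\bigoplus_{\kappa\in\cc(G)}\FF\hat\kappa$. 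Separating the singleton classes, which correspond bijectively to the $z\in\Zeta(G)$ via $\hat\kappa=z$, from the rest yields the second equality.

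Item \ref{item:ZK} is the crux. The key observation is that, for $g,t\in G$, the difference $t^{-1}gt-g=(t^{-1}g)t-t(t^{-1}g)$ is a commutator and so lies in $\Comm(\FF G)$; together with the identity used for \ref{item:KJ2} this shows that $\Comm(\FF G)$ is exactly the span of all differences of $G$-conjugate basis elements. Since such differences respect the block decomposition $\FF G=\bigoplus_{\kappa}\FF\kappa$ (where $\FF\kappa$ denotes the span of a class $\kappa$), I obtain $\Comm(\FF G)=\bigoplus_{\kappa\in\cc(G)}V_\kappa$, with $V_\kappa\subseteq\FF\kappa$ the subspace of elements whose coefficients sum to zero. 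Intersecting with $\Zeta(\FF G)=\bigoplus_\kappa\FF\hat\kappa$ block by block, the singleton classes contribute nothing, while for $|\kappa|\neq1$ the hypotheses that $G$ is a $p$-group and $\FF$ has characteristic $p$ give $|\kappa|\equiv0$ in $\FF$, so that $\hat\kappa\in V_\kappa$. This produces $\Zeta(\FF G)\cap\Comm(\FF G)=\bigoplus_{|\kappa|\neq1}\FF\hat\kappa$.

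The last two items are then formal. For \ref{item:ideal} it suffices that multiplication by a central element preserves $\Comm(\FF G)$: if $b\in\Zeta(\FF G)$ then $(xy-yx)b=x(yb)-(yb)x\in\Comm(\FF G)$, so $\Comm(\FF G)b\subseteq\Comm(\FF G)$, and since $\Zeta(\FF G)$ is a subalgebra the intersection is one of its ideals. Finally \ref{item:FZG} is just the combination of \ref{item:Z}, which identifies the span of the singleton class sums with $\FF\Zeta(G)$, and \ref{item:ZK}, which identifies the complementary summand with $\Zeta(\FF G)\cap\Comm(\FF G)$. The only genuinely delicate point is item \ref{item:ZK}: I expect the main obstacle to be the reverse inclusion, namely ruling out that a nonzero combination of central group elements lies in $\Comm(\FF G)$, together with pinning down exactly where the $p$-group and characteristic-$p$ hypotheses enter to force the non-central class sums into $\Comm(\FF G)$. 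Establishing the full block decomposition $\Comm(\FF G)=\bigoplus_\kappa V_\kappa$ settles both issues simultaneously and reduces the remaining items to bookkeeping.
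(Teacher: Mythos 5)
Your proposal is correct, and all five items are established by sound arguments; the difference from the paper is chiefly one of self-containedness. For item \ref{item:KJ2} the paper uses the very same identity, only applied to arbitrary $x,y\in\FF G$ together with the splitting $\FF G=\FF\oplus\Ideal(\FF G)$, whereas you restrict to group elements and invoke bilinearity; the two are interchangeable. For items \ref{item:Z}--\ref{item:FZG} the paper gives no argument at all and simply cites \cite[Section~III.6]{Seh78}, while you reprove these facts from scratch. Your central structural step --- the block decomposition $\Comm(\FF G)=\bigoplus_{\kappa\in\cc(G)}V_\kappa$, with $V_\kappa$ the coefficient-sum-zero subspace of the span of the class $\kappa$ --- is precisely the classical ingredient underlying Sehgal's treatment, and you correctly isolate the only place where the hypotheses (finite $p$-group, characteristic $p$) enter: a non-singleton class has $p$-power size, so $\hat\kappa$ has coefficient sum $0$ in $\FF$ and lies in $V_\kappa$, while singleton classes contribute nothing to the intersection. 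One small point you should make explicit: to see that $\Comm(\FF G)$ equals the span of differences of conjugate group elements you need both inclusions, and the inclusion ``every $gh-hg$ is such a difference'' rests on the observation that $hg=h(gh)h^{-1}$, i.e.\ that $gh$ and $hg$ are conjugate; this is implicit but unstated in your sketch. What your route buys is a proof readable without outside references; what the paper's route buys is brevity, at the cost of deferring the real content of items \ref{item:Z}--\ref{item:FZG} to the literature.
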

\begin{proof}

\ref{item:KJ2} Note that, for every $x, y \in \FF G$,  one has
$$xy - yx = (x - 1)(y - 1) - (y - 1)(x - 1).$$
The claim now follows from $\FF G = \FF \oplus \Ideal(\FF G)$.
Points \ref{item:Z} to \ref{item:FZG} are direct consequences of  \cite[Section~III.6]{Seh78}.
\end{proof}

Finally, we state some classical results on the modular isomorphism problem: the invariance of the center due to Ward and Sehgal \cite[Chapter~III, Theorem~6.6]{Seh78} and the invariance of the Frattini quotient \cite[Chapter~14, Lemma~2.7]{Pas77}.

\begin{thm}
	\label{thm:WS}
	The isomorphism type of the center $\Zeta(G)$ is an invariant of $\FF G$.
\end{thm}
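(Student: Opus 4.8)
The plan is to recover the isomorphism type of the abelian group $\Zeta(G)$ from $\FF G$ in two stages: first reconstruct the commutative group algebra $\FF[\Zeta(G)]$ as an algebra invariant, and then read off the invariants $|\Agemo_n(\Zeta(G))|$ from it, appealing to \cref{prop:type}.

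First I would observe that $\Zeta(\FF G)$ is the center of the algebra and $\Comm(\FF G)$ its commutator subspace, so both are canonical and are preserved by any algebra isomorphism $\FF G \to \FF H$; hence so is their intersection $\Zeta(\FF G) \cap \Comm(\FF G)$, which by Lemma~\ref{lem:center}\ref{item:ideal} is an ideal of $\Zeta(\FF G)$. Consequently the quotient algebra $\Zeta(\FF G)/(\Zeta(\FF G)\cap\Comm(\FF G))$ is an invariant of $\FF G$. By Lemma~\ref{lem:center}\ref{item:FZG} the subalgebra $\FF\Zeta(G)$ is a vector-space complement of this ideal, so the canonical projection identifies the quotient with $\FF\Zeta(G) = \FF[\Zeta(G)]$ as $\FF$-algebras. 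Thus the commutative group algebra $\FF[\Zeta(G)]$ is an algebra invariant of $\FF G$, and by \cref{prop:type} it remains only to show that the numbers $|\Agemo_n(\Zeta(G))|$ can be recovered from the algebra $\FF[\Zeta(G)]$ alone.

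For the second stage I would prove the following purely algebraic statement: if $A$ is a finite abelian $p$-group and $B = \FF[A]$, then
\[
	\dim{ \operatorname{span}_{\FF} \set{ x^{p^n} \given x \in B } } = |\Agemo_n(A)|.
\]
Since $B$ is commutative, \cref{lem:dream} reads as the honest equality $(u+v)^{p^n} = u^{p^n} + v^{p^n}$ here (as $\Comm(B)=0$), and repeated use of it gives $\left(\sum_{a} c_a\, a\right)^{p^n} = \sum_a c_a^{p^n}\, a^{p^n}$ for scalars $c_a \in \FF$. Hence every $p^n$-th power lies in $\operatorname{span}_{\FF}\set{a^{p^n} \given a \in A} = \FF[\Agemo_n(A)]$, while conversely each group-basis element $b = a^{p^n}$ of $\FF[\Agemo_n(A)]$ is itself the $p^n$-th power of the group element $a$. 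The displayed $\FF$-span therefore equals $\FF[\Agemo_n(A)]$, whose dimension is $|\Agemo_n(A)|$. As any algebra isomorphism is $\FF$-linear and multiplicative, it carries the set of $p^n$-th powers bijectively onto the set of $p^n$-th powers and preserves $\FF$-spans; applying this to the invariant algebra $\FF[\Zeta(G)]$ shows each $|\Agemo_n(\Zeta(G))|$ is an invariant of $\FF G$, and \cref{prop:type} then finishes the proof.

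The step I expect to require the most care is the field-theoretic bookkeeping in the second stage: over a non-perfect field $\FF$ the set $\set{x^{p^n} \given x \in B}$ is closed under addition but only under scalar multiplication by $p^n$-th powers, so it need not be an $\FF$-subspace on the nose. The point is that one works with its $\FF$-linear span, and the freshman's-dream computation shows this span coincides with $\FF[\Agemo_n(A)]$ irrespective of whether $\FF$ is perfect; this is also precisely what keeps the whole argument valid over an arbitrary field of characteristic $p$, in line with the field-independence emphasized in the introduction. Everything else is a direct consequence of the canonicity of the center and the commutator subspace together with \cref{lem:center}.
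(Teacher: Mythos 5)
Your proposal is correct, but it cannot be "the same as the paper's proof" because the paper offers none: \cref{thm:WS} is the classical Ward--Sehgal theorem, which the authors simply cite from Sehgal's book \cite[Chapter~III, Theorem~6.6]{Seh78}. What you have written is a sound, self-contained reconstruction of essentially that classical argument, and it fits the paper's own toolkit exactly. Your first stage --- passing to $\Zeta(\FF G)/(\Zeta(\FF G)\cap\Comm(\FF G))$, which is canonical by \cref{lem:center}\ref{item:ideal} and isomorphic to $\FF\Zeta(G)$ by \cref{lem:center}\ref{item:FZG} --- is precisely the mechanism the authors use (and generalize) in their proof of \cref{thm:ZA*}; indeed, your argument is morally the $n=0$ specialization of that proof, and the paper itself recovers $\Zeta(G)$ this way in \cref{cor:invariants}. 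Your second stage goes one step further than the paper: where the authors, at the analogous point of the proof of \cref{thm:ZA*}, invoke "the modular isomorphism problem being positively solved for abelian groups" as a known fact, you actually prove that abelian case, showing via \cref{lem:dream} (which, as you note, is an honest equality in a commutative algebra) that the $\FF$-span of the $p^n$-th powers in $\FF[A]$ equals $\FF[\Agemo_n(A)]$, so that the sequence $(|\Agemo_n(A)|)_{n\ge 0}$ is an algebra invariant and \cref{prop:type} applies. Your handling of the non-perfect-field subtlety (taking the span rather than the set of $p^n$-th powers) is exactly right and is what keeps the argument valid over an arbitrary field of characteristic $p$, matching the field-independence the paper emphasizes. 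Finally, your proof uses only \cref{lem:center}, \cref{lem:dream} and \cref{prop:type}, none of which depend on \cref{thm:WS}, so no circularity is introduced with the later results (\cref{thm:CO*}, \cref{thm:reduction}) that rely on it.
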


\begin{prop}
	\label{prop:Frat}
	The isomorphism type of the Frattini quotient $G/\Frat(G)$ is an invariant of $\FF G$.
\end{prop}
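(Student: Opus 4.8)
The plan is to recover, from the algebra $\FF G$, the rank of the elementary abelian group $G/\Frat(G)$; since an elementary abelian $p$-group is determined up to isomorphism by its rank, this suffices. Writing $d(G)$ for the minimal number of generators of $G$, the group $G/\Frat(G)$ is elementary abelian of rank $d(G)$, so the task reduces to showing that $d(G)$ is an invariant of $\FF G$. I would recover it as $\dim{I/I^2}$, where $I = \Ideal(\FF G)$. Since $I$ is canonical in $\FF G$ (being the unique maximal ideal), so is $I^2$, and any algebra isomorphism $\FF G \to \FF H$ carries $I$ to $\Ideal(\FF H)$ and $I^2$ to $\Ideal(\FF H)^2$; hence it induces a linear isomorphism $I/I^2 \cong \Ideal(\FF H)/\Ideal(\FF H)^2$ and $\dim{I/I^2}$ is an invariant. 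The heart of the matter is therefore the identity
\[
	\dim{\Ideal(\FF G)/\Ideal(\FF G)^2} = d(G).
\]

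To prove this identity I would first establish the inclusion $\Ideal(\FF\Frat(G)) \subseteq I^2$. Using $\Frat(G) = \Agemo^*_1(G) = \Agemo_1(G)\Comm(G)$ together with \cref{lem:product} applied to the ideal $I^2$ and the normal subgroups $\Agemo_1(G)$ and $\Comm(G)$, this splits into $\Ideal(\FF\Comm(G)) \subseteq I^2$ and $\Ideal(\FF\Agemo_1(G)) \subseteq I^2$. The first is immediate from the equivalence of items \ref{item:comm2} and \ref{item:comm3} in \cref{lem:commutative} combined with the inclusion $\Comm(\FF G) \subseteq I^2$ recorded in item~\ref{item:KJ2} of \cref{lem:center}. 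For the second, the decisive ingredient is the characteristic-$p$ computation $g^p - 1 = (g - 1)^p \in I^p \subseteq I^2$ for $g \in G$; since the $g^p$ generate $\Agemo_1(G)$ and $\Ideal(\FF\Agemo_1(G))$ is spanned by the $a - 1$ with $a \in \Agemo_1(G)$, the identity $ab - 1 = (a - 1)(b - 1) + (a - 1) + (b - 1)$ (and its analogue for inverses) propagates the inclusion from these generators to all of $\Agemo_1(G)$. Granting $\Ideal(\FF\Frat(G)) \subseteq I^2$, and hence $\Ideal(\FF\Frat(G))\FF G \subseteq I^2$, I would use \cref{lem:quotient} to identify $\ker\aug$ with $\Ideal(\FF\Frat(G))\FF G$ for $\aug \colon \FF G \to \FF[G/\Frat(G)]$. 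As this kernel lies in $I^2$, the map $\aug$ induces a linear isomorphism $I/I^2 \cong \Ideal(\FF\bar G)/\Ideal(\FF\bar G)^2$, where $\bar G = G/\Frat(G)$: indeed $\aug(I) = \Ideal(\FF\bar G)$, so $\aug(I^2) = \Ideal(\FF\bar G)^2$, and $\aug^{-1}(\Ideal(\FF\bar G)^2) = I^2 + \ker\aug = I^2$.

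This reduces the computation to the elementary abelian group $\bar G \cong (\mathbb{Z}/p\mathbb{Z})^d$ with $d = d(G)$, where it can be carried out explicitly: choosing generators $g_1, \dots, g_d$ and setting $t_i = g_i - 1$, one has $\FF\bar G \cong \FF[t_1, \dots, t_d]/(t_1^p, \dots, t_d^p)$ with augmentation ideal $(t_1, \dots, t_d)$, so that $\dim{\Ideal(\FF\bar G)/\Ideal(\FF\bar G)^2} = d$. Chaining the three facts gives $\dim{I/I^2} = d(G)$, and since this number is the rank of the elementary abelian group $G/\Frat(G)$, the isomorphism type of $G/\Frat(G)$ is indeed an invariant of $\FF G$. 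I expect the main obstacle to be the inclusion $\Ideal(\FF\Frat(G)) \subseteq I^2$, specifically its $\Agemo_1$-part: this is the only step where the characteristic-$p$ hypothesis genuinely enters, through $g^p - 1 = (g - 1)^p$. The remaining work is either bookkeeping with the relative augmentation ideals supplied by the preliminary lemmas, or the explicit local-algebra computation over $\FF$---which, incidentally, also makes transparent that the argument is insensitive to whether $\FF$ is the prime field.
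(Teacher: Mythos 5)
Your proof is correct and takes essentially the same route as the source the paper relies on: the paper offers no proof of this proposition, citing \cite[Chapter~14, Lemma~2.7]{Pas77} instead, and the classical argument behind that citation is precisely your identity $\dim{\Ideal(\FF G)/\Ideal(\FF G)^2} = \dg(G)$, obtained by passing to $\FF[G/\Frat(G)]$ modulo the kernel $\Ideal(\FF\Frat(G))\FF G \subseteq \Ideal(\FF G)^2$ and computing in the elementary abelian case. Note also that your key inclusion is exactly \cref{lem:augS}\ref{item:Frat}, which the paper establishes later by the same combination of \cref{lem:commutative,lem:product,lem:dream}.
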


\section{Abelian invariants}
\label{sec:invariants}

In this section we present new classes of abelian invariants of the modular group algebra of a finite $p$-group, namely those from \cref{main:invariants}.
This theorem will be obtained from the combination of \cref{thm:CO*,thm:ZA*}, which
are dual to each other in the following sense. While \cref{thm:CO*} is a result of filtering the abelianization $G/\Comm(G)$ taking products with the terms of $(\Omega_n^*(G))_{n\geq 0}$, in \cref{thm:ZA*}  the center $\Zeta(G)$ is filtered via intersections with the terms of $(\Agemo_n^*(G))_{n\geq 0}$.

\subsection{Commutator subgroups and socles}\label{sec:CO*}
We now establish the first pair of abelian invariants from \cref{main:invariants}.
We will do so relying on \cref{prop:type} and identifying canonical ideals in the modular group algebra reflecting properties of the considered group quotients: the analogy is drawn in \cref{fig:CO*}.

\begin{thm}
	\label{thm:CO*}
	Let $n$ be a non-negative integer. Then the isomorphism types of the following are invariants of $\FF G$:
	\begin{multicols}{2} \begin{enumerate}
		\item\label{it:CO*1} $G/\Comm(G)\Omega^*_n(G)$,
		\item\label{it:CO*2} $\Comm(G)\Omega^*_n(G)/\Comm(G)$.
	\end{enumerate} \end{multicols}
\end{thm}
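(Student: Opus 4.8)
The plan is to deduce both statements from \cref{prop:type}. Writing \(\overline{G}\mathrel{:=}G/\Comm(G)\) and \(\overline{X}\) for the image in \(\overline{G}\) of a subgroup \(X\leq G\), set \(U\mathrel{:=}\overline{\Omega^*_n(G)}=\Comm(G)\Omega^*_n(G)/\Comm(G)\); then \ref{it:CO*1} asks for the invariance of the quotient \(\overline{G}/U\) and \ref{it:CO*2} for that of the subgroup \(U\), both abelian. I will repeatedly use the following consequence of \cref{prop:type}: if a canonical quotient algebra of \(\FF G\) is isomorphic to the group algebra \(\FF A\) of a finite abelian \(p\)-group \(A\), then \(A\) is an invariant of \(\FF G\), since the orders \(|A/\Agemo_m(A)|=\dim_{\FF}\bigl(\FF A/\langle\Agemo_m(\Ideal(\FF A))\rangle\bigr)\), and hence all \(|\Agemo_m(A)|\), are invariants.

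The common engine is a single canonical ideal encoding \(U\) as a relative augmentation ideal. Since \(\Comm(\FF G)\) is canonical, by \cref{lem:commutative,lem:quotient} the projection \(\pi\colon\FF G\to\FF\overline{G}\) with kernel \(\langle\Comm(\FF G)\rangle\) is canonical. By \cref{lem:center} the decomposition \(\Zeta(\FF G)=\FF\Zeta(G)\oplus(\Zeta(\FF G)\cap\Comm(\FF G))\) has the second summand an ideal of \(\Zeta(\FF G)\), so the quotient \(C\mathrel{:=}\Zeta(\FF G)/(\Zeta(\FF G)\cap\Comm(\FF G))\cong\FF\Zeta(G)\) is a canonical commutative group algebra, and \(\pi|_{\Zeta(\FF G)}\) factors through a canonical algebra map \(\overline{\pi}\colon C\to\FF\overline{G}\) with image \(\FF\overline{\Zeta(G)}\). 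In \(C\) the subspace \(\Omega_n(\Ideal(C))\) (where \(\Ideal(C)\) is the maximal ideal) is canonical and, by additivity of \(x\mapsto x^{p^n}\) in the commutative algebra \(C\) (\cref{lem:dream}), an ideal. The decisive input — provable by reducing \(\FF\Zeta(G)\) to a tensor product of truncated polynomial algebras via the classification of finite abelian \(p\)-groups — is the identity
\[
	\Omega_n(\Ideal(C))=\Ideal(\FF\Omega_n(\Zeta(G)))\,C .
\]
Granting it, \(J\mathrel{:=}\overline{\pi}(\Omega_n(\Ideal(C)))=\Ideal(\FF U)\,\FF\overline{\Zeta(G)}\) is canonical, namely the relative augmentation ideal of \(U\).

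Now \ref{it:CO*1} follows: the extension \(J\cdot\FF\overline{G}=\Ideal(\FF U)\FF\overline{G}\) is canonical, and by \cref{lem:quotient} the quotient \(\FF[\overline{G}/U]=\FF[G/\Comm(G)\Omega^*_n(G)]\) is a canonical commutative group algebra, so \(\overline{G}/U\) is an invariant by the principle above. For \ref{it:CO*2} the key computation is that, for every \(m\geq 0\), the ideal of \(\FF\overline{\Zeta(G)}\) generated by \(\Agemo_m(J)\) equals \(\Ideal(\FF\Agemo_m(U))\,\FF\overline{\Zeta(G)}\): writing \(x=\sum_i(u_i-1)c_i\in J\) with \(u_i\in U\), commutativity (so that \cref{lem:dream} holds with equality) gives
\[
	x^{p^m}=\sum_i(u_i-1)^{p^m}c_i^{p^m}=\sum_i\bigl(u_i^{p^m}-1\bigr)\,c_i^{p^m},
\]
which lies in \(\Ideal(\FF\Agemo_m(U))\,\FF\overline{\Zeta(G)}\), while the generators \(u^{p^m}-1=(u-1)^{p^m}\) of the latter arise this way. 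Being canonical, this ideal makes \(|\overline{\Zeta(G)}/\Agemo_m(U)|\) an invariant, and together with the invariant \(|\overline{\Zeta(G)}|\) this determines \(|\Agemo_m(U)|\) for all \(m\); \cref{prop:type} then yields \ref{it:CO*2}.

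The main obstacle is \ref{it:CO*2}: canonical quotient algebras directly deliver only \emph{quotients} of the abelian groups involved, whereas \(U\) is a \emph{subgroup} of \(\overline{\Zeta(G)}\). The computation above circumvents this by trading \(U\) for the family of quotients \(\overline{\Zeta(G)}/\Agemo_m(U)\), exploiting that the \(p^m\)-power map is Frobenius-additive on \(J\). A second, easily overlooked point — and the reason the center must be accessed through \(C\) rather than through \(\overline{G}\) — is that \(\Omega_n\) has to be formed in \(\Zeta(G)\) before projecting: the image \(U\) of \(\Omega_n(\Zeta(G))\) is in general strictly smaller than \(\Omega_n(\overline{\Zeta(G)})\), so applying \(\Omega_n\) after projection would overshoot. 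The remaining technical work is a clean proof of the displayed identity \(\Omega_n(\Ideal(C))=\Ideal(\FF\Omega_n(\Zeta(G)))\,C\).
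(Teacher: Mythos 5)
Your architecture is sound, and modulo one unproven statement the argument goes through; but that statement is the technical heart of the theorem, and you have deferred exactly the part that needs a proof. Your displayed identity $\Omega_n(\Ideal(C)) = \Ideal(\FF\Omega_n(\Zeta(G)))\,C$ is precisely the paper's \cref{lem:powers} in the case $m=0$, applied with the abelian group $\Zeta(G)$ in the role of $G$ (note that in a commutative algebra of characteristic $p$ the set $\Omega_n$ is closed under addition and under multiplication by arbitrary elements, and consists of nilpotents, so ``set'' and ``ideal generated by'' coincide; hence the two formulations agree). The inclusion $\supseteq$ is the easy Frobenius computation you indicate. The substantive inclusion $\subseteq$ --- that every $x$ with $x^{p^n}=0$ lies in the relative augmentation ideal of $\Omega_n(\Zeta(G))$ --- is where the paper does real work: it chooses a transversal $R$ of $\Omega_n(\Zeta(G))$ in $\Zeta(G)$, uses that $r\mapsto r^{p^n}$ is injective on $R$, and extracts from $x^{p^n}=0$ that the coefficients over each coset sum to zero. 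Your proposed alternative (decompose $\FF\Zeta(G)$ as a tensor product of truncated polynomial algebras and compare monomials) does work, but as written it is a claim, not a proof, so the proposal is incomplete at its decisive point.

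Everything you build on top of that identity is correct, and your organization is a genuine variant of the paper's. The paper stays inside $\FF G$: its \cref{lem:augCAO*} shows that $\Ideal(\FF \Comm(G)\Agemo_m(\Omega^*_n(G)))\FF G$ is the smallest ideal containing $\Comm(\FF G)$ and $\Agemo_m(\Omega_n(\Zeta(\FF G)))$ --- which requires decomposing elements of $\Zeta(\FF G)$ into central group elements plus class sums --- and then both parts follow via \cref{lem:quotient}, \cref{thm:WS} and \cref{prop:type} from codimension counts, using that $|G:\Comm(G)|$ is itself an invariant. You instead pass first to the canonical commutative algebras $C=\Zeta(\FF G)/(\Zeta(\FF G)\cap\Comm(\FF G))\cong\FF\Zeta(G)$ and $\FF[G/\Comm(G)]$, where Frobenius additivity holds with equality, and transport the canonical ideal along $\overline{\pi}$; the class-sum bookkeeping of \cref{lem:augCAO*} is absorbed into your citation of \cref{lem:center}. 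This is closer in spirit to the paper's proof of the dual result \cref{thm:ZA*} (compare \cref{lem:ZnN}) than to its own proof of \cref{thm:CO*}, and it buys a small simplification: you only need the key identity for $m=0$, handling higher powers afterwards inside $\FF[\Zeta(G)\Comm(G)/\Comm(G)]$. Your two closing observations are also correct and worth keeping: $\Omega_n$ must be formed in $\Zeta(G)$ \emph{before} projecting modulo $\Comm(G)$, and trading the subgroup $U$ for the family of quotients by $\Agemo_m(U)$ is exactly the right way to convert subgroup data into canonical-quotient data before invoking \cref{prop:type}.
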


\begin{figure}[h]
	\includegraphics{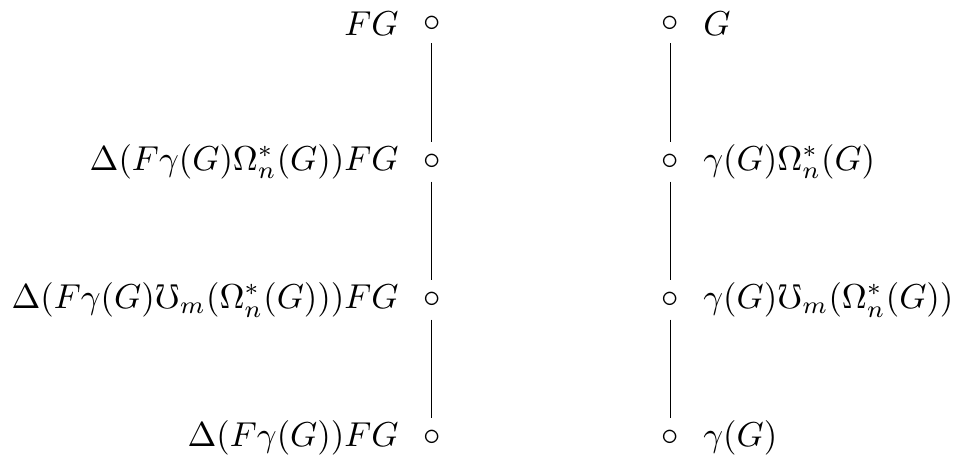}
	\caption{Corresponding ideals and subgroups.}\label{fig:CO*}
\end{figure}

\begin{lem}
	\label{lem:powers}
	Let $m$ and $n$ be non-negative integers.
	Assume that $G$ is abelian.
	Then the following equality holds:
	\begin{equation*}
		\Ideal(\FF \Agemo_m(\Omega_n(G)))\FF G
		= \Agemo_m(\Omega_n(FG))\FF G.
	\end{equation*}
\end{lem}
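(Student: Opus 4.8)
The plan is to prove the asserted equality of ideals of $\FF G$ by analysing the two generating sets directly. Write $N = \Omega_n(G)$ and $M = \Agemo_m(N) = \Agemo_m(\Omega_n(G))$; since $G$ is abelian these are genuine subgroups and $M = \set{h^{p^m} \given h \in N}$. By \cref{lem:quotient} the left-hand side $\Ideal(\FF M)\FF G$ is the ideal generated by $\set{h^{p^m} - 1 \given h \in N}$, while the right-hand side is the ideal generated by $\set{x^{p^m} \given x \in \Omega_n(\FF G)}$. Throughout I would use that $\FF G$ is commutative, so $\Comm(\FF G) = 0$ and \cref{lem:dream} upgrades to the exact ``freshman's dream'' identity $(x+y)^{p^k} = x^{p^k} + y^{p^k}$; in particular $(h - 1)^{p^k} = h^{p^k} - 1$ for $h \in G$.

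The inclusion $\supseteq$ of the left-hand side into the right-hand one is then immediate: for $h \in N$ one has $(h-1)^{p^n} = h^{p^n} - 1 = 0$, so $h - 1 \in \Omega_n(\FF G)$, whence $h^{p^m} - 1 = (h-1)^{p^m} \in \Agemo_m(\Omega_n(\FF G))$ lies in the right-hand side. As these elements span $\Ideal(\FF M)$ over $\FF$ and the right-hand side is an ideal, this inclusion follows.

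For the reverse inclusion the crux is the auxiliary identity $\Omega_n(\FF G) = \Ideal(\FF N)\FF G$. Granting it, I take a generator $x^{p^m}$ with $x \in \Omega_n(\FF G) = \Ideal(\FF N)\FF G$ and write $x = \sum_\alpha (v_\alpha - 1)s_\alpha$ with $v_\alpha \in N$ and $s_\alpha \in \FF G$; then $x^{p^m} = \sum_\alpha (v_\alpha^{p^m} - 1)s_\alpha^{p^m}$ by commutativity and the freshman's dream, and each $v_\alpha^{p^m} \in M$, so $x^{p^m} \in \Ideal(\FF M)\FF G$. Thus everything reduces to the auxiliary identity. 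To prove it I would pass to coordinates: decomposing $G = \prod_{i=1}^r \gen{g_i}$ with $\gen{g_i}$ of order $p^{k_i}$ and setting $t_i = g_i - 1$ identifies $\FF G$ with the truncated polynomial algebra $\FF[t_1, \dots, t_r]/(t_1^{p^{k_1}}, \dots, t_r^{p^{k_r}})$, whose monomials $t_1^{e_1}\cdots t_r^{e_r}$ with $0 \le e_i < p^{k_i}$ form an $\FF$-basis. On $x = \sum_{\mathbf e} c_{\mathbf e}\, t^{\mathbf e}$ the freshman's dream gives $x^{p^n} = \sum_{\mathbf e} c_{\mathbf e}^{p^n}\, t^{p^n\mathbf e}$, where $t^{p^n\mathbf e} \neq 0$ precisely when $e_i p^n < p^{k_i}$ for all $i$; since the surviving monomials are linearly independent, $x^{p^n} = 0$ exactly when $x$ is supported on exponents with $e_i p^n \ge p^{k_i}$ for some $i$. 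Hence $\Omega_n(\FF G)$ is the monomial ideal generated by the $t_i^{p^{\max(k_i - n,\, 0)}}$. On the other hand $N = \prod_i \gen{g_i^{p^{\max(k_i - n,\, 0)}}}$, so by \cref{lem:quotient} together with $g_i^{p^{\max(k_i - n,\, 0)}} - 1 = t_i^{p^{\max(k_i - n,\, 0)}}$ the ideal $\Ideal(\FF N)\FF G$ is generated by the very same monomials, which proves the identity.

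The two reductions via the freshman's dream are routine; I expect the main obstacle to be the coordinate computation for the auxiliary identity, above all the bookkeeping of which monomials are annihilated by the $p^n$-power map and the matching of the two resulting monomial ideals, including the degenerate coordinates with $k_i \le n$ (where the exponent collapses to $p^0$, the generator is simply $t_i$, and $\Omega_n(\gen{g_i}) = \gen{g_i}$).
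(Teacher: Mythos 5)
Your proof is correct, and its outer skeleton matches the paper's: in both arguments the inclusion of $\Ideal(\FF \Agemo_m(\Omega_n(G)))\FF G$ into $\Agemo_m(\Omega_n(\FF G))\FF G$ is the easy freshman's-dream direction, and the reverse inclusion is reduced to the key fact that every $x \in \Omega_n(\FF G)$ lies in $\Ideal(\FF \Omega_n(G))\FF G$, after which raising to the $p^m$-th power termwise (legitimate by commutativity) finishes. The genuine difference is how that key fact is established. The paper stays inside $\FF G$ and uses no structure theory: it fixes a transversal $R$ of $\Omega_n(G)$ in $G$, writes $x = \sum_{g \in \Omega_n(G)}\sum_{r \in R}\alpha_{gr}\,gr$, observes that $r \mapsto r^{p^n}$ is injective on $R$, and deduces from $0 = x^{p^n} = \sum_{r}\bigl(\sum_{g}\alpha_{gr}\bigr)^{p^n} r^{p^n}$ and the linear independence of the distinct group elements $r^{p^n}$ that each coefficient sum $\sum_{g}\alpha_{gr}$ vanishes, whence $x = \sum_{r}\sum_{g}\alpha_{gr}(g-1)r \in \Ideal(\FF\Omega_n(G))\FF G$. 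You instead invoke the classification of finite abelian $p$-groups to identify $\FF G$ with a truncated polynomial algebra and compute $\Omega_n(\FF G)$ outright as the monomial ideal generated by the $t_i^{p^{\max(k_i-n,\,0)}}$; your handling of the degenerate coordinates $k_i \le n$ and of which monomials survive the $p^n$-th power map is accurate. Your route costs more machinery (the structure theorem, which the paper's proof of this particular lemma avoids, plus the exponent bookkeeping), but it buys an explicit description of $\Omega_n(\FF G)$ and makes the auxiliary identity $\Omega_n(\FF G) = \Ideal(\FF \Omega_n(G))\FF G$ fully transparent, including why $\Omega_n(\FF G)$ is an ideal at all. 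Two cosmetic points: the symbol ``$\supseteq$'' announcing your first inclusion is backwards relative to your own prose (you prove LHS $\subseteq$ RHS, which is what is needed), and the appeal to \cref{lem:quotient} for ``$\Ideal(\FF M)\FF G$ is generated by the elements $h^{p^m}-1$'' is really just the definition of the relative augmentation ideal together with the standard reduction to a generating set of $M$ via the identity $xy-1 = (x-1)y + (y-1)$.
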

\begin{proof}
Thanks to \cref{lem:dream},	it is easy to show that \[\Ideal(\FF \Agemo_m(\Omega_n(G)))\FF G
		\subseteq \Agemo_m(\Omega_n(FG))\FF G.\]
	To show the opposite inclusion, take an element $x \in \Omega_n(\FF G)$. We will show that $x^{p^m} \in \Ideal(\FF \Agemo_m(\Omega_n(G)))\FF G$.
To this end, choose a complete set $R$ of coset representatives of $\Omega_n(G)$ in $G$	and write
	\[
		x = \sum_{g \in \Omega_n(G)}\sum_{r \in R} \alpha_{gr} gr\ \textup{ for some }\ \alpha_{gr} \in \FF.
	\]
	Observe now that the map $R \rightarrow G$, defined by $r \mapsto r^{p^n}$, is injective.
	From
	\begin{align*}
		0 = x^{p^n}
		= \bigg( \sum_{g \in \Omega_n(G)}\sum_{r \in R} \alpha_{gr} gr \bigg)^{p^n}
		= \sum_{r \in R} \bigg(\sum_{g \in \Omega_n(G)} \alpha_{gr} \bigg)^{p^n} r^{p^n},
	\end{align*}
	we obtain, for each representative $r \in R$, that
	$
		\sum_{g \in \Omega_n(G)} \alpha_{gr} = 0.
	$
	Hence
	\begin{align*}
		x
		= \sum_{r \in R}\bigg(\sum_{g \in \Omega_n(G)} \alpha_{gr} gr  - \sum_{g \in \Omega_n(G)} \alpha_{gr} r \bigg)
		= \sum_{r \in R}\sum_{g \in \Omega_n(G)} \alpha_{gr} (g - 1)r
	\end{align*}
	and we see that
	$
		x^{p^m} = \sum_{r \in R}\sum_{g \in \Omega_n(G)} \alpha_{gr}^{p^m} (g^{p^m} - 1)r^{p^m} \in \Ideal(\FF \Agemo_m(\Omega_n(G)))\FF G
	$.
\end{proof}

\begin{lem}
	\label{lem:augCAO*}
	Let $m$ and $n$ be non-negative integers.
	Then
	\[ \Ideal(\FF \Comm(G)\Agemo_m(\Omega^*_n(G)))\FF G \]
	is the smallest ideal of $\FF G$ containing both $\Comm(\FF G)$ and $\Agemo_m(\Omega_n(\Zeta(\FF G)))$.
\end{lem}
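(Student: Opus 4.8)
The plan is to verify directly that $I := \Ideal(\FF N)\FF G$, where $N := \Comm(G)\Agemo_m(\Omega^*_n(G))$, enjoys the two defining features of the smallest ideal containing $\Comm(\FF G)$ and $\Agemo_m(\Omega_n(\Zeta(\FF G)))$: first, that $I$ contains both of these sets, and second, that any ideal $J$ containing both must contain $I$. Since $N$ is the product of the two normal subgroups $\Comm(G)$ and $\Agemo_m(\Omega^*_n(G))$, \cref{lem:product} lets me split the augmentation ideal $\Ideal(\FF N)$; and since $J$ is an ideal, $J \supseteq I$ is equivalent to $J \supseteq \Ideal(\FF N)$. Thus both directions reduce to comparing $J$, respectively $I$, with $\Ideal(\FF \Comm(G))$ and $\Ideal(\FF \Agemo_m(\Omega^*_n(G)))$ separately.

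For minimality, suppose $J$ is an ideal with $J \supseteq \Comm(\FF G)$ and $J \supseteq \Agemo_m(\Omega_n(\Zeta(\FF G)))$. The containment $J \supseteq \Ideal(\FF \Comm(G))$ is immediate from \cref{lem:commutative}. For $J \supseteq \Ideal(\FF \Agemo_m(\Omega^*_n(G)))$ I would first treat the group generators: for $z \in \Omega_n(\Zeta(G))$, the element $z - 1$ is central in $\FF G$ and satisfies $(z-1)^{p^n} = z^{p^n} - 1 = 0$, so $z - 1 \in \Omega_n(\Zeta(\FF G))$ and hence $z^{p^m} - 1 = (z-1)^{p^m} \in \Agemo_m(\Omega_n(\Zeta(\FF G))) \subseteq J$. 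A routine computation using $ab - 1 = a(b-1) + (a-1)$ then shows that $\set{w \in \Agemo_m(\Omega^*_n(G)) \given w - 1 \in J}$ is a subgroup; as it contains every generator $z^{p^m}$, it is the whole group, giving $\Ideal(\FF \Agemo_m(\Omega^*_n(G))) \subseteq J$.

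For the reverse containment, $I \supseteq \Comm(\FF G)$ again follows from \cref{lem:commutative} together with $N \supseteq \Comm(G)$. The heart of the argument is $I \supseteq \Agemo_m(\Omega_n(\Zeta(\FF G)))$. Given $x \in \Omega_n(\Zeta(\FF G))$, I decompose $x = a + c$ according to \cref{lem:center}\ref{item:FZG}, with $a \in \FF\Zeta(G)$ and $c \in \Zeta(\FF G)\cap\Comm(\FF G)$. Working inside the commutative ring $\Zeta(\FF G)$, Frobenius gives $x^{p^k} = a^{p^k} + c^{p^k}$, while the ideal property of $\Zeta(\FF G)\cap\Comm(\FF G)$ in \cref{lem:center}\ref{item:ideal} keeps each $c^{p^k}$ inside that intersection. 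From $x^{p^n} = 0$ and the directness of the sum I read off $a^{p^n} = 0$, so $a \in \Omega_n(\FF\Zeta(G))$; and since $c^{p^m} \in \Comm(\FF G) \subseteq I$, I get $x^{p^m} \equiv a^{p^m} \bmod I$. Finally, applying \cref{lem:powers} to the abelian group $\Zeta(G)$ yields $a^{p^m} \in \Ideal(\FF \Agemo_m(\Omega^*_n(G)))\FF\Zeta(G) \subseteq I$, whence $x^{p^m} \in I$.

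The main obstacle is precisely this last passage: translating the algebra-theoretic condition ``$x^{p^n} = 0$ with $x$ central'' into membership in the group-theoretic subgroup $\Agemo_m(\Omega^*_n(G)) = \Agemo_m(\Omega_n(\Zeta(G)))$. The key realization is that, after projecting onto $\FF\Zeta(G)$ via the decomposition in \cref{lem:center}\ref{item:FZG}, the question becomes one purely about the commutative group algebra of the abelian group $\Zeta(G)$, where \cref{lem:powers} performs exactly the required translation between algebra powers and group powers. The delicate point to get right is that the ``commutator part'' $c$ must not escape $I$ under taking $p^m$-th powers, which is guaranteed by the ideal property recorded in \cref{lem:center}\ref{item:ideal}.
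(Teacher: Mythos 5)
Your proof is correct and follows essentially the same route as the paper: both directions rest on \cref{lem:commutative,lem:product}, and the key step decomposes a central element via \cref{lem:center}, uses the ideal property of $\Zeta(\FF G)\cap\Comm(\FF G)$ and directness to extract the $\FF\Zeta(G)$-component in $\Omega_n(\FF\Zeta(G))$, and then invokes \cref{lem:powers} for the abelian group $\Zeta(G)$. The only cosmetic difference is that you work with the abstract splitting of \cref{lem:center}\ref{item:FZG} where the paper writes the element out in the class-sum basis of \cref{lem:center}\ref{item:Z}, and you spell out the generator argument that the paper dismisses as easy.
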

\begin{proof}
	Let $I$ be the smallest ideal of $\FF G$ containing $\Comm(\FF G)$ and $\Agemo_m(\Omega_n(\Zeta(\FF G)))$ and set $K = \Comm(G)\Agemo_m(\Omega^*_n(G))$: we show that $I=\Ideal(\FF K)\FF G$.

	With the aid of \cref{lem:commutative,lem:product}, it is easy to show that $I \supseteq \Ideal(\FF K)\FF G$.
	To prove that $I \subseteq \Ideal(\FF K)\FF G$,
	we will use \cref{lem:commutative,lem:product} to show that $\Agemo_m(\Omega_n(\Zeta(\FF G))) \subseteq \Ideal(\FF K)\FF  G$.
	To this end, fix $x \in \Omega_n(\Zeta(\FF G))$: we will prove that $x^{p^m} \in \Ideal(\FF K)\FF G$.
	As a consequence of \cref{lem:center}\ref{item:Z}, write
	\[
		x = \sum_{z \in \Zeta(G)} \alpha_z z + \sum_{\substack{\kappa \in \cc(G)\\ |\kappa| \neq 1}} \beta_\kappa \hat\kappa
	\]
	for some $\alpha_z$, $\beta_\kappa \in \FF$.
	\cref{lem:dream} ensures that 
	\[
		0 = x^{p^n} = \bigg( \sum_{z \in \Zeta(G)} \alpha_z z \bigg)^{p^n} + \bigg( \sum_{\substack{\kappa \in \cc(G)\\ |\kappa| \neq 1}} \beta_\kappa \hat\kappa \bigg)^{p^n}
	\]
	and so, thanks to \cref{lem:center}\ref{item:Z}--\ref{item:ideal}, we obtain that $\sum_{z \in \Zeta(G)} \alpha_z z \in \Omega_n(\FF \Zeta(G))$.
	Observe now that
	\[
		x^{p^m} = \bigg( \sum_{z \in \Zeta(G)} \alpha_z z \bigg)^{p^m} + \bigg( \sum_{\substack{\kappa \in \cc(G)\\ |\kappa| \neq 1}} \beta_\kappa \hat\kappa \bigg)^{p^m}.
	\]
	The first summand belongs to $\Ideal(\FF \Agemo_m(\Omega^*_n(G)))\FF G$, by \cref{lem:powers}, and the second summand belongs to $\Ideal(\FF \Comm(G))\FF G$, by \cref{lem:center}\ref{item:ZK}--\ref{item:ideal}.
Now	\cref{lem:product} yields that  $x^{p^m} \in \Ideal(\FF K)\FF G$.
\end{proof}

We close the present section with the proof of \cref{thm:CO*}.
\begin{proof}[Proof of \cref{thm:CO*}]
Assume that $\FF G\cong \FF H$ and let $n$ be a non-negative integer.

\ref{it:CO*1} Thanks to \cref{lem:augCAO*}, the ideal $\Ideal(\FF \Comm(G)\Omega^*_n(G))\FF G$ is canonical in $\FF G$ and thus every algebra isomorphism $\FF G\rightarrow \FF H$ induces an algebra isomorphism
$$\FF G/\Ideal(\FF \Comm(G)\Omega^*_n(G))\FF G\rightarrow \FF H/\Ideal(\FF \Comm(H)\Omega^*_n(H))\FF H.$$
Thanks to \cref{lem:quotient}, the quotient $\FF G/\Ideal(\FF \Comm(G)\Omega^*_n(G))\FF G$ is isomorphic to $\FF[G/\Comm(G)\Omega^*_n(G)]$ and so it follows from \cref{thm:WS} that the isomorphism type of  $G/\Comm(G)\Omega^*_n(G)$ is an invariant of $\FF G$.

\ref{it:CO*2}	Combining \cref{lem:augCAO*,lem:quotient} with the fact that
	\begin{equation*}
		\Agemo_m(\Comm(G)\Omega^*_n(G)/\Comm(G)) = \Comm(G)\Agemo_m(\Omega^*_n(G))/\Comm(G),
	\end{equation*}
	the sequence $(|\Agemo_k(\Comm(G)\Omega^*_n(G)/\Comm(G))|)_{k \geq 0}$ is an invariant of $\FF G$.
	\cref{prop:type} yields that the isomorphism type of $\Comm(G)\Omega^*_n(G)/\Comm(G)$ is an invariant of $\FF G$. This concludes the proof.
\end{proof}

\subsection{Frattini subgroups and centers}\label{sec:ZA*}
We now establish the second pair of abelian invariants from \cref{main:invariants}. In analogy to what we did in \cref{sec:CO*}, we refer the reader to \cref{fig:ZA*} for an overview of the ideals involved in the proof\footnote{Technically speaking, we will not consider the algebra $\FF \Zeta(G)$, but rather an isomorphic algebra $\Zeta(\FF G)/(\Zeta(\FF G) \cap \Comm(\FF G))$.}.

\begin{thm}
	\label{thm:ZA*}
	Let $n$ be a non-negative integer. Then the isomorphism types of the following are invariants of $\FF G$:
	\begin{multicols}{2} \begin{enumerate}
		\item\label{it:ZA*1}  $\Zeta(G) \cap \Agemo^*_n(G)$,
		\item\label{it:ZA*2}  $\Zeta(G)/\Zeta(G) \cap \Agemo^*_n(G)$.
	\end{enumerate} \end{multicols}
\end{thm}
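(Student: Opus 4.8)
**Proof proposal for Theorem~\ref{thm:ZA*}.**

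The plan is to mirror the strategy used for \cref{thm:CO*}, but now working inside the center $\Zeta(\FF G)$ rather than on the abelianization. The key structural input is \cref{lem:center}\ref{item:FZG}, which gives the canonical decomposition $\Zeta(\FF G) = \FF\Zeta(G) \oplus (\Zeta(\FF G) \cap \Comm(\FF G))$. Passing to the quotient, the algebra $\Zeta(\FF G)/(\Zeta(\FF G) \cap \Comm(\FF G))$ is isomorphic to $\FF\Zeta(G)$, as anticipated in the footnote. Since $\Zeta(\FF G)$ is canonical by \cref{thm:WS} (it is the center, definable by a formula in the language of algebras) and $\Zeta(\FF G) \cap \Comm(\FF G)$ is likewise canonical, every algebra isomorphism $\FF G \to \FF H$ restricts to an algebra isomorphism $\FF\Zeta(G) \to \FF\Zeta(H)$. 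This reduces the problem to tracking a single canonically defined filtration on the commutative algebra $\FF\Zeta(G)$.

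The central claim I would isolate is that, under the identification $\FF\Zeta(G) \cong \Zeta(\FF G)/(\Zeta(\FF G) \cap \Comm(\FF G))$, the power-map image $\Agemo_m(\FF\Zeta(G))$ corresponds to the subgroup $\Zeta(G) \cap \Agemo^*_m(G) = \Zeta(G) \cap \Agemo_m(G)\Comm(G)$ at the group level. More precisely, I would prove an analogue of \cref{lem:augCAO*}: the relative augmentation ideal $\Ideal(\FF[\Zeta(G) \cap \Agemo^*_n(G)])$ inside $\FF\Zeta(G)$ coincides with the ideal generated by $\Agemo_n$ applied to the commutative algebra $\FF\Zeta(G)$. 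The inclusion sending group-level $p^n$-th powers into algebra-level $p^n$-th powers is immediate; for the reverse inclusion I would take $x \in \FF\Zeta(G)$, expand it in the group basis of $\Zeta(G)$, and use \cref{lem:dream} (which here is exact since $\FF\Zeta(G)$ is commutative, so the $\Comm$ error term vanishes) to compute $x^{p^n}$ coefficient-wise, exactly as in the coset argument of \cref{lem:powers}. The crucial point is that $\Agemo_n^*(G) = \Agemo_n(G)\Comm(G)$ already absorbs the commutator subgroup, which is precisely why the correspondence lands on $\Agemo^*_n$ rather than the bare $\Agemo_n$: the image of a $p^n$-th power in the central quotient only sees $\Zeta(G)$ modulo the contribution of non-central class sums, and intersecting with $\Comm(G)\Agemo_n(G)$ encodes this cleanly.

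Granting this correspondence, both statements follow by the same device as in \cref{thm:CO*}. For \ref{it:ZA*1}, the sequence $(|\Agemo_n(\FF\Zeta(G))|)_{n\geq 0}$ is determined by $\FF G$ via the canonical quotient, and by the correspondence this is the sequence $(|\Zeta(G) \cap \Agemo^*_n(G)|)_{n\geq 0}$; \cref{prop:type} then upgrades this numerical invariant to the isomorphism type of the abelian group $\Zeta(G) \cap \Agemo^*_n(G)$. For \ref{it:ZA*2}, I would note that the quotient $\Zeta(G)/(\Zeta(G)\cap\Agemo^*_n(G))$ corresponds to $\FF\Zeta(G)$ modulo the canonical ideal $\Agemo_n(\FF\Zeta(G))\FF\Zeta(G)$, and again apply \cref{prop:type} after verifying that $\Agemo_k$ of the quotient matches the group-level power subgroups. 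The main obstacle I anticipate is the careful bookkeeping in the power-map computation: ensuring that the Frobenius-type expansion of $x^{p^n}$ interacts correctly with the basis of $\Zeta(G)$, and confirming that the non-central class sums contribute nothing to the relevant quotient. This is where \cref{lem:center}\ref{item:ZK}--\ref{item:ideal} must be invoked to guarantee that the ideal $\Zeta(\FF G)\cap\Comm(\FF G)$ is closed under the operations in question, so that passing to the quotient is well-defined at every step.
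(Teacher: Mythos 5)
Your opening move is sound and agrees with the paper's setup: $\Zeta(\FF G)$ and $\Comm(\FF G)$ are preserved by every algebra isomorphism, so the commutative algebra $\FF\Zeta(G)\cong\Zeta(\FF G)/(\Zeta(\FF G)\cap\Comm(\FF G))$ is determined by $\FF G$ (cf.\ \cref{lem:center}\ref{item:FZG}), and the final appeal to \cref{prop:type} is exactly how the paper concludes. The proof breaks at your ``central claim''. You assert that, under this identification, the ideal of $\FF\Zeta(G)$ generated by $p^n$-th powers (of elements of its augmentation ideal -- under any other reading the ideal contains $1$ and is everything) equals $\Ideal(\FF[\Zeta(G)\cap\Agemo^*_n(G)])\FF\Zeta(G)$. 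This is false. The coset/Frobenius computation you invoke (\cref{lem:powers}, equivalently \cref{lem:augA} applied to the abelian group $\Zeta(G)$) shows that this ideal equals $\Ideal(\FF\Agemo_n(\Zeta(G)))\FF\Zeta(G)$: powers computed inside the center can only detect $p^n$-th powers \emph{of central elements}. But $\Zeta(G)\cap\Agemo^*_n(G)$ is in general strictly larger than $\Agemo_n(\Zeta(G))$, since it also contains central elements of the form $g^{p^n}c$ with $g\notin\Zeta(G)$ and $c\in\Comm(G)$. Concretely, let $G$ be extraspecial of order $p^3$ and exponent $p$, with $p$ odd. Then $\Zeta(G)=\Comm(G)=\Frat(G)$ is cyclic of order $p$, so $\Agemo_1(\Zeta(G))=1$ while $\Zeta(G)\cap\Agemo^*_1(G)=\Zeta(G)$. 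On the algebra side, every element of $\Ideal(\FF\Zeta(G))$ has $p$-th power $0$, so your proposed ideal is zero, whereas $\Ideal(\FF[\Zeta(G)\cap\Agemo^*_1(G)])\FF\Zeta(G)$ is the full augmentation ideal of $\FF\Zeta(G)$. No bookkeeping in the expansion of $x^{p^n}$ can repair this: elements such as $g^{p^n}-1$ with $g$ non-central are simply not in the image of the power map of $\FF\Zeta(G)$, and quotienting by $\Zeta(\FF G)\cap\Comm(\FF G)$ does not put them there.

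The idea your proposal is missing -- and the actual crux of the paper's proof -- is to take powers \emph{before} restricting to the center. By \cref{lem:augA*}, the ideal $\Ideal(\FF \Agemo^*_n(G))\FF G$ of the \emph{full} algebra is canonical, being the smallest ideal containing $\Comm(\FF G)$ together with $\Agemo_n(\Ideal(\FF G))$, i.e.\ $p^n$-th powers of arbitrary, typically non-central, elements of the augmentation ideal of $\FF G$. One then intersects with $\Zeta(\FF G)$; the key computation (\cref{lem:ZnN}, an augmentation/coset argument relative to the normal subgroup $\Agemo^*_n(G)\supseteq\Comm(G)$) gives
\begin{equation*}
\Zeta(\FF G)\cap\Ideal(\FF \Agemo^*_n(G))\FF G=\Ideal(\FF[\Zeta(G)\cap\Agemo^*_n(G)])\FF\Zeta(G)\oplus(\Zeta(\FF G)\cap\Comm(\FF G)),
\end{equation*}
which is where the subgroup $\Zeta(G)\cap\Agemo^*_n(G)$ genuinely enters. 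Only afterwards does the paper apply $\Agemo_m$ inside the center, forming a canonical ideal $\Theta(\FF G)$ of $\Zeta(\FF G)$ whose quotient is isomorphic to $\FF[\Zeta(G)/\Agemo_m(\Zeta(G)\cap\Agemo^*_n(G))]$, and conclude via the abelian case of the modular isomorphism problem and \cref{prop:type}. In short: $\Zeta(G)\cap\Agemo^*_n(G)$ is not an intrinsic feature of the algebra $\FF\Zeta(G)$; it must be read off from how $\Zeta(\FF G)$ sits inside a canonical ideal of $\FF G$, and your proposal has no substitute for this step.
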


\begin{figure}[h]
	\includegraphics{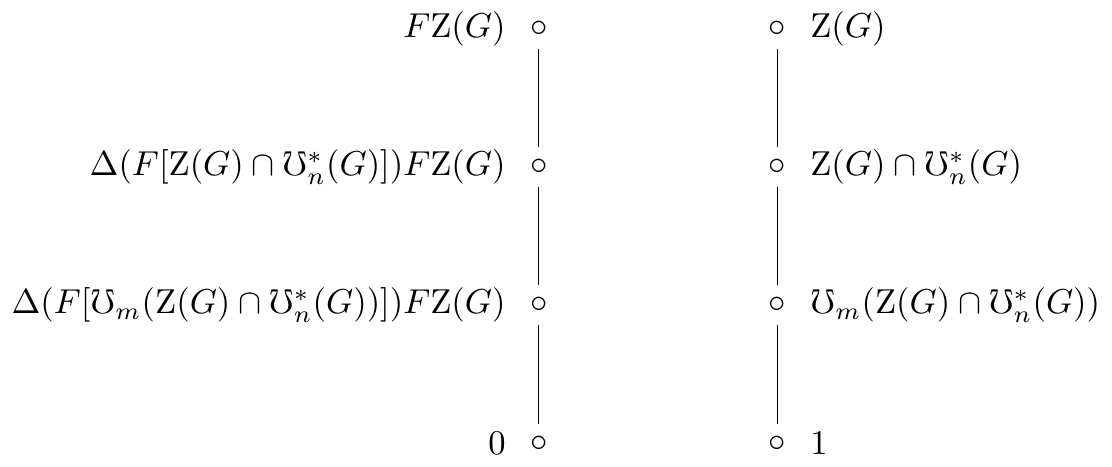}
	\caption{Corresponding ideals and subgroups.}\label{fig:ZA*}
\end{figure}

\begin{lem}
	\label{lem:augA}
	Assume that $G$ is abelian. 
	Let $K$ be a subgroup of $G$ and $m$ a non-negative integer.  
	Then
	\[ \Ideal(\FF \Agemo_m(K))\FF G \]
	is the smallest ideal of $\FF G$ containing $\Agemo_m(\Ideal(\FF K)\FF G).$
\end{lem}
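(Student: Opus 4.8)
The plan is to prove the two inclusions separately, mirroring the structure of the proof of \cref{lem:powers}. Let $I$ denote the smallest ideal of $\FF G$ containing $\Agemo_m(\Ideal(\FF K)\FF G)$. The easy inclusion is $I \subseteq \Ideal(\FF \Agemo_m(K))\FF G$. To see this, it suffices to check that each generator $x^{p^m}$ with $x \in \Ideal(\FF K)\FF G$ lies in $\Ideal(\FF \Agemo_m(K))\FF G$. Here I would write $x$ as an $\FF$-combination of elements $(k-1)g$ with $k \in K$ and $g \in G$, and then raise to the $p^m$-th power. Since $G$ is abelian, $\FF G$ is commutative, so $\Comm(\FF G)=0$ and \cref{lem:dream} gives an honest additive Frobenius: $x^{p^m} = \sum \alpha^{p^m} (k-1)^{p^m} g^{p^m}$. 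The point is then that $(k-1)^{p^m} = k^{p^m} - 1$ in characteristic $p$ (again by the commutative Frobenius, or directly since $(k-1)^p = k^p - 1$), and $k^{p^m} \in \Agemo_m(K)$, so each term sits in $\Ideal(\FF \Agemo_m(K))\FF G$ as desired.

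For the reverse inclusion $\Ideal(\FF \Agemo_m(K))\FF G \subseteq I$, I would show that the $\FF G$-module generators $k^{p^m} - 1$, for $k \in K$, all lie in $I$. Fix $k \in K$ and set $y = k - 1 \in \Ideal(\FF K)\FF G$. Then $y^{p^m} = (k-1)^{p^m} = k^{p^m} - 1$ by the same commutative-Frobenius computation, and $y^{p^m} \in \Agemo_m(\Ideal(\FF K)\FF G) \subseteq I$. Since $I$ is an ideal, it contains $\Ideal(\FF \Agemo_m(K))\FF G = \sum_{k \in K} \FF G (k^{p^m} - 1)$, completing this direction. Combining the two inclusions yields the claimed equality, and the minimality in the statement is automatic once we exhibit $\Ideal(\FF \Agemo_m(K))\FF G$ as an ideal containing $\Agemo_m(\Ideal(\FF K)\FF G)$ and contained in every such ideal.

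The main subtlety, and the only place requiring genuine care, is the computation $(k-1)^{p^m} = k^{p^m} - 1$ together with the passage from a single power $x^{p^m}$ to a membership statement modulo the ideal structure. In the commutative setting this is clean, but I would want to state precisely why raising a sum to the $p^m$-th power distributes over the summands: this is exactly \cref{lem:dream} specialized to $\Comm(\FF G) = 0$, valid because $G$ is abelian. I expect the easy inclusion to be slightly more delicate than the reverse one, since there one must handle a general element $x \in \Ideal(\FF K)\FF G$ rather than just the generators $k-1$; however, because $\FF G$ is commutative and the Frobenius is additive here, the expansion $x^{p^m} = \sum \alpha_i^{p^m} (k_i - 1)^{p^m} g_i^{p^m}$ goes through termwise without the coset-representative bookkeeping that was needed in \cref{lem:powers}. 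Thus the proof should be appreciably shorter than that of \cref{lem:powers}, with the abelian hypothesis doing most of the work.
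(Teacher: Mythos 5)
Your proof is correct and takes essentially the same route as the paper, whose entire proof of this lemma reads ``This is a direct consequence of \cref{lem:dream}'': your two inclusions, both resting on the exact additive Frobenius $(k-1)^{p^m}=k^{p^m}-1$ available because $\Comm(\FF G)=0$ when $G$ is abelian, are precisely the intended expansion of that remark. Your closing observation is also accurate—no coset-representative bookkeeping as in \cref{lem:powers} is needed here, since the abelian hypothesis makes the Frobenius honest rather than merely additive modulo $\Comm(\FF G)$.
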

\begin{proof}
	This is a direct consequence of \cref{lem:dream}.
\end{proof}

\begin{lem}
	\label{lem:augA*}
	Let $n$ be a non-negative integer.
	Then
	\[ \Ideal(\FF \Agemo^*_n(G))\FF G \]
	is the smallest ideal of $\FF G$ containing both $\Comm(\FF G)$ and $\Agemo_n(\Ideal(\FF G))$.
\end{lem}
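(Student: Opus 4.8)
The plan is to reduce the statement to the abelian setting and then invoke \cref{lem:augA}. Write $I$ for the smallest ideal of $\FF G$ containing both $\Comm(\FF G)$ and $\Agemo_n(\Ideal(\FF G))$; the goal is to identify $I$ with $\Ideal(\FF\Agemo^*_n(G))\FF G$. The starting observation is that, since $I \supseteq \Comm(\FF G)$, \cref{lem:commutative} forces $I \supseteq \Ideal(\FF\Comm(G))\FF G$. By \cref{lem:quotient} the latter ideal is exactly the kernel of the natural surjection $\pi \colon \FF G \to \FF[G/\Comm(G)]$, so $I$ is the preimage under $\pi$ of its own image. This is what lets us transport the whole question into the commutative algebra $\FF\bar G$, where $\bar G = G/\Comm(G)$.

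Next I would compute $\pi(I)$. Because $I$ is the ideal generated by $\Comm(\FF G)$ together with $\Agemo_n(\Ideal(\FF G))$ and contains $\ker\pi$, its image $\pi(I)$ is the ideal of $\FF\bar G$ generated by $\pi(\Comm(\FF G))$ and $\pi(\Agemo_n(\Ideal(\FF G)))$. The first set maps to $0$ since $\FF\bar G$ is commutative. For the second, since $\pi$ is a surjective algebra homomorphism it commutes with $p^n$-th powers and sends $\Ideal(\FF G)$ onto $\Ideal(\FF\bar G)$; hence $\pi(\Agemo_n(\Ideal(\FF G))) = \Agemo_n(\Ideal(\FF\bar G))$. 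Therefore $\pi(I)$ is precisely the smallest ideal of $\FF\bar G$ containing $\Agemo_n(\Ideal(\FF\bar G))$. Applying \cref{lem:augA} to the abelian group $\bar G$ with the choice of subgroup $\bar G$ itself and exponent $n$ identifies this ideal as $\Ideal(\FF\Agemo_n(\bar G))\FF\bar G$.

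It then remains to pull back along $\pi$. Using the preliminary identity $\Agemo_n(G/\Comm(G)) = \Agemo^*_n(G)/\Comm(G)$ and the inclusion $\Comm(G) \subseteq \Agemo^*_n(G)$, the ideal $\Ideal(\FF\Agemo_n(\bar G))\FF\bar G$ is the image under $\pi$ of $\Ideal(\FF\Agemo^*_n(G))\FF G$, while $\ker\pi \subseteq \Ideal(\FF\Agemo^*_n(G))\FF G$. Consequently $I = \pi^{-1}(\pi(I)) = \Ideal(\FF\Agemo^*_n(G))\FF G$, as desired.

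The step I expect to require the most care is the identity $\pi(\Agemo_n(\Ideal(\FF G))) = \Agemo_n(\Ideal(\FF\bar G))$: the operation $\Agemo_n$ is the (nonlinear) $p^n$-th power map on a set, not a linear one, so one must argue explicitly that the image of a set of powers is the set of powers of the image, which works only because $\pi$ is a ring homomorphism mapping $\Ideal(\FF G)$ onto $\Ideal(\FF\bar G)$. A parallel subtlety is the bijective correspondence between ideals of $\FF G$ containing $\ker\pi$ and ideals of $\FF\bar G$, which is what guarantees that the property of being the smallest such ideal is preserved under $\pi$ and $\pi^{-1}$. Alternatively, one can avoid the quotient and prove the two inclusions directly: the inclusion $I \subseteq \Ideal(\FF\Agemo^*_n(G))\FF G$ holds because $\FF[G/\Agemo^*_n(G)]$ is commutative with underlying group of exponent dividing $p^n$, whereas for the reverse inclusion the key computation is $(g-1)^{p^n} \equiv g^{p^n} - 1 \bmod \Comm(\FF G)$ coming from \cref{lem:dream}, which places each generator $g^{p^n} - 1$ of $\Ideal(\FF\Agemo_n(G))\FF G$ inside $I$; combined with \cref{lem:commutative,lem:product} this gives $\Ideal(\FF\Agemo^*_n(G))\FF G \subseteq I$.
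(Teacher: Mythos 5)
Your proof is correct, but your primary argument takes a genuinely different route from the paper's. The paper disposes of this lemma in one line, citing \cref{lem:product,lem:commutative,lem:dream}: the intended argument is the direct two-inclusion one, in which \cref{lem:dream} gives $g^{p^n}-1 \equiv (g-1)^{p^n} \bmod \Comm(\FF G)$ for the ideal generators $g^{p^n}-1$ of $\Ideal(\FF \Agemo_n(G))\FF G$, while \cref{lem:commutative,lem:product} handle the commutator part and the product $\Agemo_n(G)\Comm(G)$ --- precisely the argument you sketch as your ``alternative'' in the final paragraph. Your main argument instead transports the problem through the abelianization: you pass to $\pi\colon \FF G \to \FF[G/\Comm(G)]$, observe via \cref{lem:commutative,lem:quotient} that any ideal containing $\Comm(\FF G)$ contains $\ker\pi$, compute that $\pi$ sends the two generating sets to $0$ and to $\Agemo_n(\Ideal(\FF[G/\Comm(G)]))$ respectively, invoke \cref{lem:augA} with $K$ equal to the whole abelianization, and pull back using $\Agemo_n(G/\Comm(G)) = \Agemo^*_n(G)/\Comm(G)$ together with $\Comm(G) \subseteq \Agemo^*_n(G)$. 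All steps check out, including the two points you rightly flag as delicate (the image of a set of $p^n$-th powers under a surjective homomorphism, and the correspondence between ideals containing $\ker\pi$ and ideals of the quotient). What your route buys is conceptual economy: the only nonlinear power-map manipulation is quarantined inside \cref{lem:augA}, which is reused rather than redone, and everything else is formal ideal correspondence; it also makes transparent why the statement is ``really'' about the abelianization. What the paper's route buys is brevity: it works entirely inside $\FF G$ and avoids all bookkeeping of images and preimages. Both arguments rest ultimately on \cref{lem:dream}.
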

\begin{proof}
	This follows easily from \cref{lem:product,lem:commutative,lem:dream}.
\end{proof}

\begin{lem}
	\label{lem:ZnN}
	Let $N$ be a  subgroup of $G$ containing $\Comm(G)$.
	Then the following equality holds:
	\begin{equation}\label{eq:SSum}
		\Zeta(\FF G) \cap \Ideal(\FF N)\FF G
		= \Ideal(\FF[\Zeta(G) \cap N])\FF \Zeta(G) \oplus (\Zeta(\FF G) \cap \Comm(\FF G)).
	\end{equation}
\end{lem}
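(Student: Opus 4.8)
The plan is to compute the left-hand side of \eqref{eq:SSum} using the class-sum basis of the center provided by \cref{lem:center}\ref{item:Z}, together with \cref{lem:quotient}, which identifies $\Ideal(\FF N)\FF G$ with the kernel of the natural map $\aug\colon \FF G \to \FF[G/N]$. Since $N$ contains $\Comm(G)$, the quotient $G/N$ is abelian, and this is the structural feature that drives the entire argument. Accordingly, I would characterize directly, for a general element $w\in\Zeta(\FF G)$ written in the basis of class sums, exactly when $\aug(w)=0$, thereby obtaining both inclusions at once.

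First I would record the effect of $\aug$ on class sums. Because $G/N$ is abelian, any two conjugate elements of $G$ have the same image in $G/N$; hence for a conjugacy class $\kappa$ with representative $g$ one gets $\aug(\hat\kappa) = |\kappa|\,\overline{g}$, where $\overline{g}=gN$. As $G$ is a $p$-group, $|\kappa|$ is a power of $p$, so whenever $|\kappa|\neq 1$ the scalar $|\kappa|$ vanishes in $\FF$ and $\aug(\hat\kappa)=0$. Combined with \cref{lem:center}\ref{item:ZK}, this shows that $\Zeta(\FF G)\cap\Comm(\FF G)=\bigoplus_{|\kappa|\neq 1}\FF\hat\kappa$ is contained in $\ker\aug=\Ideal(\FF N)\FF G$, so this summand sits inside both sides of \eqref{eq:SSum} and may be split off from the analysis.

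It then remains to analyze the restriction of $\aug$ to the complement $\FF\Zeta(G)$ furnished by \cref{lem:center}\ref{item:FZG}. Writing a general element as $\sum_{z\in\Zeta(G)}\alpha_z z$, its image under $\aug$ is $\sum_z \alpha_z\,\overline{z}$, and I would observe that the fibres of the map $z\mapsto\overline{z}$ on $\Zeta(G)$ are precisely the cosets of $\Zeta(G)\cap N$ in $\Zeta(G)$ (two central elements agree modulo $N$ iff their quotient lies in $\Zeta(G)\cap N$). Since distinct cosets yield distinct, hence linearly independent, elements of $\FF[G/N]$, the element lies in $\ker\aug$ exactly when every coset-sum of the $\alpha_z$ vanishes. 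By \cref{lem:quotient} applied to the abelian group $\Zeta(G)$ and its subgroup $\Zeta(G)\cap N$, this condition is exactly membership in $\Ideal(\FF[\Zeta(G)\cap N])\FF\Zeta(G)$. Assembling the two parts through the decomposition of \cref{lem:center}\ref{item:FZG} then yields \eqref{eq:SSum}.

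The computations involved are all routine; the single point requiring care, and the conceptual heart of the proof, is the reduction carried out in the first step: the abelianness of $G/N$ forces every non-central class sum into the relative augmentation ideal, because the class length is a nontrivial power of $p$ and therefore zero in $\FF$. Once this is in hand, the $\FF\Zeta(G)$-part is nothing more than the abelian instance of \cref{lem:quotient}, and the direct-sum structure follows immediately.
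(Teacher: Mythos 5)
Your proof is correct and follows essentially the same route as the paper's: both rest on the class-sum description of $\Zeta(\FF G)$ from \cref{lem:center}, the identification of $\Ideal(\FF N)\FF G$ as $\ker\aug$ from \cref{lem:quotient}, and the key observation that $\aug(\hat\kappa)=0$ for every non-central class $\kappa$ because $|\kappa|$ is a positive power of $p$ (a point the paper states without the explanation you supply). The only difference is cosmetic: where the paper finishes with an explicit coset-representative computation showing that vanishing coset sums force membership in $\Ideal(\FF[\Zeta(G)\cap N])\FF\Zeta(G)$, you obtain the same conclusion by applying \cref{lem:quotient} to $\Zeta(G)$ and $\Zeta(G)\cap N$, which also lets you handle both inclusions at once.
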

\begin{proof}
	Note that $N$ is normal in $G$ because it contains $\Comm(G)$.
	It is easy to show that the left-hand side contains the right-hand side.
	To show the opposite inclusion, fix $x\in\Zeta(\FF G) \cap \Ideal(\FF N)\FF G$ and, thanks to \cref{lem:center}\ref{item:Z}, write
	\[
		x = \sum_{z\in\Zeta(G)} \alpha_z z + \sum_{\substack{\kappa\in\cc(G)\\ |\kappa|\neq1}} \beta_{\kappa} \hat{\kappa}
	\]
	for some $\alpha_z$, $\beta_\kappa \in \FF$.
	By \cref{lem:center}\ref{item:ZK}, the second summand belongs to $\Zeta(\FF G) \cap \Comm(\FF G)$; we will show that the first summand belongs to $\Ideal(\FF[\Zeta(G) \cap N])\FF \Zeta(G)$.
	Recall that $\Ideal(\FF N)\FF G$ is the kernel of the natural surjective map $\aug \colon \FF G \to \FF [G/N]$ by \cref{lem:quotient}.
	In particular $\aug(x) = 0$ yields
	\begin{equation*}
		\sum_{z\in\Zeta(G)} \alpha_z \aug(z) + \sum_{\substack{\kappa\in\cc(G)\\ |\kappa|\neq1}} \beta_{\kappa} \aug(\hat{\kappa})=0.
	\end{equation*}
	Since $N$ contains $\Comm(G)$, each non-central conjugacy class $\kappa$ satisfies
	$
		\aug(\hat{\kappa}) = 0.
	$
	It follows that
	$
		\sum_{z\in\Zeta(G)} \alpha_z zN = 0
	$.
	Choose a complete set $R$ of coset representatives of $\Zeta(G) \cap N$ in $\Zeta(G)$.
	As the map $R \to G/N$, defined by $r \mapsto rN$ is injective, from
	\begin{align*}
		0
		&= \sum_{z\in\Zeta(G)} \alpha_z zN
		= \sum_{r \in R}\sum_{n\in\Zeta(G)\cap N} \alpha_{rn} {rn}N
		= \sum_{r \in R} \bigg( \sum_{n\in\Zeta(G)\cap N} \alpha_{rn} \bigg) rN,
	\end{align*}
	we deduce, for each representative $r \in R$, that $\sum_{n\in\Zeta(G)\cap N} \alpha_{rn} = 0$.
	Hence
	\begin{align*}
		\sum_{z\in\Zeta(G)} \alpha_z z
		&= \sum_{r \in R}\sum_{n\in\Zeta(G)\cap N} \alpha_{rn} {rn} \\
		&= \sum_{r \in R}\bigg( \sum_{n\in\Zeta(G)\cap N} \alpha_{rn} {rn} - \sum_{n\in\Zeta(G)\cap N} \alpha_{rn} r \bigg) \\
		&= \sum_{r \in R}\sum_{n \in \Zeta(G) \cap N} \alpha_{nr} (n - 1)r
	\end{align*}
	and we see that $\sum_z \alpha_z z \in \Ideal(\FF[\Zeta(G) \cap N])\FF \Zeta(G)$.

	Note that the sum in \eqref{eq:SSum} is direct as consequence of \cref{lem:center}\ref{item:Z}.
\end{proof}

We close \cref{sec:ZA*} with the proof of \cref{thm:ZA*}.
\begin{proof}[Proof of \cref{thm:ZA*}]
Assume that $\FF G \cong \FF H$ and let $n$ and $m$ be non-negative integers.
We start by showing that the isomorphism type of the quotient group $\Zeta(G)/\Agemo_m(\Zeta(G) \cap \Agemo^*_n(G))$ is an invariant of $\FF G$.

Recall that $\Zeta(\FF G) \cap \Comm(\FF G)$ is an ideal of $\Zeta(\FF G)$ by \cref{lem:center}\ref{item:ideal}.
Let $\Theta(\FF G)$ be the smallest ideal of $\Zeta(\FF G)$ containing both
$\Agemo_m(\Zeta(\FF G) \cap \Ideal(\FF \Agemo^*_n(G))\FF G)$
and
$\Zeta(\FF G) \cap \Comm(\FF G)$
so that
\begin{equation}\label{eq:Theta}
    \Theta(\FF G) = \Agemo_m(\Zeta(\FF G) \cap \Ideal(\FF \Agemo^*_n(G))\FF G)\Zeta(\FF G) + (\Zeta(\FF G) \cap \Comm(\FF G)).
\end{equation}
Since, by \cref{lem:augA*}, the ideal $\Ideal(\FF \Agemo^*_n(G))\FF G$ is canonical in $\FF G$ so is the ideal $\Theta(\FF G)$.
Thus every algebra isomorphism $\FF G \to \FF H$ induces an algebra isomorphism $\Zeta(\FF G)/\Theta(\FF G) \to \Zeta(\FF H)/\Theta(\FF H)$.

Thanks to \cref{lem:ZnN}, the following equality holds:
\begin{equation}\label{eq:subtle}
	\Zeta(\FF G) \cap \Ideal(\FF \Agemo^*_n(G))\FF G = \Ideal(\FF[\Zeta(G) \cap \Agemo^*_n(G)])\FF \Zeta(G) \oplus (\Zeta(\FF G) \cap \Comm(\FF G)).
\end{equation}
We rewrite $\Theta(\FF G)$ as follows.
Substituting \eqref{eq:subtle} into \eqref{eq:Theta} yields
\begin{align*}
    \Theta(\FF G)
    &= \Agemo_m(\Ideal(\FF[\Zeta(G) \cap \Agemo^*_n(G)])\FF \Zeta(G) \oplus (\Zeta(\FF G) \cap \Comm(\FF G)))\Zeta(\FF G) \\
    & \qquad + (\Zeta(\FF G) \cap \Comm(\FF G))
\end{align*}
and expanding the $p^m$-th powers on the right hand side yields
\begin{equation*}
    \Theta(\FF G) = \Agemo_m(\Ideal(\FF[\Zeta(G) \cap \Agemo^*_n(G)])\FF \Zeta(G))\Zeta(\FF G) + (\Zeta(\FF G) \cap \Comm(\FF G)).
\end{equation*}
Using $\Zeta(\FF G) = \FF \Zeta(G) \oplus (\Zeta(\FF G) \cap \Comm(\FF G))$ given in \cref{lem:center}\ref{item:FZG}, we get
\begin{equation*}
\Theta(\FF G) = \Agemo_m(\Ideal(\FF[\Zeta(G) \cap \Agemo^*_n(G)])\FF \Zeta(G)) \oplus (\Zeta(\FF G) \cap \Comm(\FF G)).
\end{equation*}
The last sum is direct as the first summand is contained in $\FF \Zeta(G)$.
By \cref{lem:augA} we finally obtain
\begin{equation}\label{eq:claim}
    \Theta(\FF G) = \Ideal(\FF[\Agemo_m(\Zeta(G) \cap \Agemo^*_n(G))])\FF \Zeta(G) \oplus (\Zeta(\FF G) \cap \Comm(\FF G)).
\end{equation}
Using \cref{lem:center}\ref{item:FZG} we have
\begin{equation*}
    \Zeta(\FF G)/\Theta(\FF G)
    = \frac{\FF \Zeta(G) \oplus (\Zeta(\FF G) \cap \Comm(\FF G))}{\Ideal(\FF[\Agemo_m(\Zeta(G) \cap \Agemo^*_n(G))])\FF \Zeta(G) \oplus (\Zeta(\FF G) \cap \Comm(\FF G))}
\end{equation*}
and as the first summand of \eqref{eq:claim} is an ideal of $\FF \Zeta(G)$, we get
\begin{equation*}
    \Zeta(\FF G)/\Theta(\FF G)
    \cong \FF \Zeta(G)/\Ideal(\FF[\Agemo_m(\Zeta(G) \cap \Agemo^*_n(G))])\FF \Zeta(G).
\end{equation*}
Thanks to \cref{lem:quotient}, the last quotient is isomorphic to
\begin{equation*}
    \FF[\Zeta(G)/\Agemo_m(\Zeta(G) \cap \Agemo^*_n(G))].
\end{equation*}
The modular isomorphism problem being positively solved for abelian groups, the isomorphism type of $\Zeta(G)/\Agemo_m(\Zeta(G) \cap \Agemo^*_n(G))$ is an invariant of $\FF G$ and, $m$ having been chosen arbitrarily, so is the sequence $(|\Agemo_k(\Zeta(G) \cap \Agemo^*_n(G))|)_{k \geq 0}$.

\ref{it:ZA*2} This is a special case of the last claim, where $m = 0$.
	
\ref{it:ZA*1}	The sequence $(|\Agemo_k(\Zeta(G) \cap \Agemo^*_n(G))|)_{k \geq 0}$ is an invariant of $\FF G$ and so \cref{prop:type} yields the isomorphism type of $\Zeta(G) \cap \Agemo^*_n(G)$ is an invariant of $\FF G$.
\end{proof}

The combination of \cref{thm:CO*} with \cref{thm:ZA*} is the same as \cref{main:invariants}, which is therefore now proven. We remark that the proof of \cref{main:invariants} is inspired by the argument for the invariance of $\Zeta(G) \cap \Comm(G)$ and $\Zeta(G)/\Zeta(G) \cap \Comm(G)$ by Sandling~\cite[Theorem~6.11]{San85}.

\begin{cor}\label{cor:invariants}
	The isomorphism types of the following are invariants of $\FF G$:
	\begin{multicols}{2} \begin{itemize}
		\item $G/\Comm(G)$,
		\item $G/\Comm(G)\Soc(G)$,
		\item $G/\Comm(G)\Zeta(G)$,
		\item $\Comm(G)\Soc(G)/\Comm(G)$,
		\item $\Comm(G)\Zeta(G)/\Comm(G)$,
		\item $G/\Frat(G)\Soc(G)$,
		\columnbreak
		\item $\Zeta(G)$,
		\item $\Zeta(G) \cap \Frat(G)$,
		\item $\Zeta(G) \cap \Comm(G)$,
		\item $\Zeta(G)/\Zeta(G) \cap \Frat(G)$,
		\item $\Zeta(G)/\Zeta(G) \cap \Comm(G)$,
		\item $\Soc(G) \cap \Frat(G)$.
	\end{itemize} \end{multicols}
\end{cor}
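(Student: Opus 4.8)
The plan is to derive all twelve statements from \cref{thm:CO*,thm:ZA*} by specializing the non-negative integer $n$, supplemented by a short argument for the two entries that mix the Frattini and socle layers. Since $G$ is a finite $p$-group, both $\Omega^*_n(G)=\Omega_n(\Zeta(G))$ and $\Agemo_n(G)$ stabilize for large $n$: there is an $N$ with $\Omega^*_N(G)=\Zeta(G)$ and $\Agemo_N(G)=1$, whence $\Agemo^*_N(G)=\Comm(G)$. I would therefore read off the following directly: $G/\Comm(G)$, $\Comm(G)\Soc(G)/\Comm(G)$ and $G/\Comm(G)\Soc(G)$ from \cref{thm:CO*} at $n=0$ and $n=1$ (using $\Omega^*_0(G)=1$ and $\Omega^*_1(G)=\Soc(G)$); the two versions with $\Zeta(G)$ from \cref{thm:CO*} at $n=N$; the entry $\Zeta(G)$ from \cref{thm:WS}; the pair involving $\Zeta(G)\cap\Frat(G)$ from \cref{thm:ZA*} at $n=1$ (since $\Agemo^*_1(G)=\Frat(G)$); and the pair involving $\Zeta(G)\cap\Comm(G)$ from \cref{thm:ZA*} at $n=N$.

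This leaves $\Soc(G)\cap\Frat(G)$ and $G/\Frat(G)\Soc(G)$, which I would handle in that order. For the intersection, the key observation is that $\Soc(G)=\Omega_1(\Zeta(G))$ is precisely the set of elements of $\Zeta(G)$ of order dividing $p$, so that $\Soc(G)\cap\Frat(G)=\Omega_1(\Zeta(G)\cap\Frat(G))$. The isomorphism type of the finite abelian group $\Zeta(G)\cap\Frat(G)$ has just been shown to be an invariant, and $\Omega_1$ of a finite abelian group is a function of its isomorphism type alone; hence the isomorphism type of $\Soc(G)\cap\Frat(G)$ is an invariant.

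For $G/\Frat(G)\Soc(G)$ I would argue by orders. The group is a quotient of $G/\Frat(G)$ and so elementary abelian, whence its isomorphism type is determined by its order. Now $|G/\Frat(G)\Soc(G)|=|G|/|\Frat(G)\Soc(G)|$ and $|\Frat(G)\Soc(G)|=|\Frat(G)|\,|\Soc(G)|/|\Frat(G)\cap\Soc(G)|$, and each of the four orders on the right is an invariant: $|G|$ is the $\FF$-dimension of $\FF G$; $|\Frat(G)|$ follows from \cref{prop:Frat} together with $|G|$; $|\Soc(G)|$ follows from \cref{thm:WS} as $\Soc(G)=\Omega_1(\Zeta(G))$; and $|\Frat(G)\cap\Soc(G)|$ was just established. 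This pins down $|G/\Frat(G)\Soc(G)|$ and hence its isomorphism type.

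The bulk of the content is already carried by \cref{thm:CO*,thm:ZA*}, so the only genuine work here is organizational: matching each listed quotient or intersection to the right value of $n$ and checking the two edge cases $n=0$ and $n=N$. The point that needs a little care --- and which I regard as the crux of the corollary --- is the passage from the invariant abelian groups to the combined layers $\Soc(G)\cap\Frat(G)$ and $G/\Frat(G)\Soc(G)$, where I rely on the inclusion $\Soc(G)\subseteq\Zeta(G)$, on $\Omega_1$ being an isomorphism invariant, and on the elementary abelian nature of $G/\Frat(G)\Soc(G)$ to reduce everything to a computation of orders.
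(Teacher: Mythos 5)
Your proposal is correct, and for eleven of the twelve entries it follows exactly the paper's route: specialize \cref{thm:CO*,thm:ZA*} at $n=0$, $n=1$, and at a sufficiently large $n$ (the paper takes $n \geq \log_p|G|$, matching your $N$), and obtain $\Soc(G)\cap\Frat(G)$ as $\Omega_1(\Zeta(G)\cap\Frat(G))$, which is precisely the identity the paper uses. The only genuine divergence is the final entry $G/\Frat(G)\Soc(G)$. The paper handles it structurally, via the isomorphism
\[
G/\Frat(G)\Soc(G) \;\cong\; \frac{G/\Comm(G)\Soc(G)}{\Agemo_1\bigl(G/\Comm(G)\Soc(G)\bigr)},
\]
so it is a functorial quotient of the already-established invariant $G/\Comm(G)\Soc(G)$ and its invariance follows with no further input. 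You instead observe that $G/\Frat(G)\Soc(G)$ is elementary abelian, hence determined by its order, and compute that order from the invariants $|G| = \dim{\FF G}$, $|\Frat(G)|$ (via \cref{prop:Frat}), $|\Soc(G)|$ (via \cref{thm:WS} and $\Soc(G)=\Omega_1(\Zeta(G))$), and $|\Soc(G)\cap\Frat(G)|$, using $|\Frat(G)\Soc(G)| = |\Frat(G)|\,|\Soc(G)|/|\Soc(G)\cap\Frat(G)|$. Both arguments are sound: the paper's identity is more economical, needing only one previously listed invariant, whereas your counting argument is more elementary but must assemble several order invariants (all of which are indeed available). Either way the corollary follows.
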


\begin{proof}
	Taking $n$ large enough ($n \geq \log_p |G|$, for example) in \cref{main:invariants} yields the following four invariants:
	\begin{equation*}
		G/\Comm(G)\Zeta(G), \quad \Comm(G)\Zeta(G)/\Comm(G), \quad \Zeta(G) \cap \Comm(G), \quad \Zeta(G)/\Zeta(G) \cap \Comm(G).
	\end{equation*}
	Setting $n = 0$ in \ref{it:invariants1} and \ref{it:invariants3} of \cref{main:invariants} yields two classic invariants:
	\begin{equation*}
		G/\Comm(G), \quad \Zeta(G).
	\end{equation*}
	Setting $n = 1$ in \cref{main:invariants} yields four new invariants
	\begin{equation*}
		G/\Comm(G)\Soc(G), \quad \Comm(G)\Soc(G)/\Comm(G), \quad \Zeta(G) \cap \Frat(G), \quad  \Zeta(G)/\Zeta(G) \cap \Frat(G).
	\end{equation*}
	The remaining two new invariants are obtained from $G/\Comm(G)\Soc(G)$ and $\Zeta(G) \cap \Frat(G)$ by observing that
	\begin{align*}
		G/\Frat(G)\Soc(G) &\cong \frac{G/\Comm(G)\Soc(G)}{\Agemo_1(G)\Comm(G)\Soc(G)/\Comm(G)\Soc(G)} = \frac{G/\Comm(G)\Soc(G)}{\Agemo_1(G/\Comm(G)\Soc(G))}, \\
		\Soc(G) \cap \Frat(G) &= \Omega_1(\Zeta(G)) \cap \Frat(G) = \Omega_1(\Zeta(G) \cap \Frat(G)).
		\qedhere
	\end{align*}
\end{proof}

We remark that  the invariants $\Zeta(G), G/\Comm(G) , \Zeta(G) \cap \Comm(G), \Zeta(G)/\Zeta(G) \cap \Comm(G)$ from \cref{cor:invariants} can be found in Sandling's survey paper~\cite[Theorems 6.11, 6.12, 6.7]{San85}.

\begin{rmk}
If $G$ has class $2$, one can see the isomorphism type of $G/\Zeta(G)$ as an invariant of $\FF G$ (cf.~\cite[Theorem~6.23]{San85}) from one of the above invariants, namely $G/\Comm(G)\Zeta(G) \cong (G/\Zeta(G))/\Comm(G/\Zeta(G))$.
\end{rmk}

\section{Elementary decompositions}
\label{sec:decomp}

In this section we present an elementary group-theoretic result on the decomposition of a finite $p$-group as a direct product of two subgroups, one of which is elementary abelian and maximal with these properties. 
This turns out to be fundamental for the proof of \cref{main:reduction}.

\begin{dfn}
	\label{dfn:decomp}
	A decomposition
	\begin{equation}
		G = T \times U
	\end{equation}
	of $G$ into subgroups $T$ and $U$ is said to be \emph{elementary} if the following are satisfied:
	\begin{enumerate}
		\item\label{item:T} $T \cap \Frat(G) = 1$ and $|T| = | \Soc(G) : \Soc(G) \cap \Frat(G) |$; and
		\item\label{item:U} $\Soc(G)U = G$ and $|U| = |G|/| \Soc(G) : \Soc(G) \cap \Frat(G) |$.
	\end{enumerate}
\end{dfn}

It can be proven that, given a decomposition $G = T \times U$, Conditions~\ref{item:T} and \ref{item:U} are equivalent.
Moreover, as defined in \cref{dfn:decomp}, the subgroup $T$ of $G$ is maximal with the property that $T$ is an elementary abelian direct factor of $G$.

\begin{lem}
	\label{lem:exists}
	An elementary decomposition $G = T \times U$ with $T$ elementary abelian always exists and
	the subgroups $T$ and $U$ are unique up to isomorphism.
	Furthermore, the following equalities hold:
	\[
		\Soc(G) = T \times \Soc(U)
		\textup{ and }
		\Frat(G) = \Frat(U).
	\]
\end{lem}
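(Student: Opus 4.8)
The plan is to handle existence, the two displayed equalities, and uniqueness separately, exploiting throughout that $\Soc(G) = \Omega_1(\Zeta(G))$ is a central elementary abelian subgroup, hence an $\F_p$-vector space, and that $\Frat(G) = \Agemo^*_1(G)$. For existence, I would first choose $T$ to be a vector-space complement of the subspace $\Soc(G) \cap \Frat(G)$ inside $\Soc(G)$, so that $\Soc(G) = T \times (\Soc(G) \cap \Frat(G))$. By construction $T$ is central elementary abelian, $|T| = |\Soc(G):\Soc(G)\cap\Frat(G)|$, and $T \cap \Frat(G) = T \cap (\Soc(G)\cap\Frat(G)) = 1$, so Condition~\ref{item:T} of \cref{dfn:decomp} holds. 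To produce the complementary factor I would pass to the elementary abelian quotient $G/\Frat(G)$: since $T\cap\Frat(G)=1$, the image $T\Frat(G)/\Frat(G)$ is a subspace isomorphic to $T$, and I would take $U$ to be the full preimage in $G$ of a vector-space complement of this subspace. Then $\Frat(G) \le U$, $TU = G$, and $T \cap U \le T\cap\Frat(G) = 1$; as $T$ is central, $[T,U]=1$ and therefore $G = T \times U$. Condition~\ref{item:U} follows at once from $\Soc(G)U \supseteq TU = G$ and $|U| = |G|/|T|$.

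For the two equalities I would first note that they hold for \emph{any} decomposition $G = T \times U$ with $T$ elementary abelian (such a $T$ is central, being an abelian direct factor). Using $\Frat(A\times B) = \Frat(A)\times\Frat(B)$ together with $\Frat(T)=1$ gives $\Frat(G) = \Frat(U)$; using $\Zeta(T\times U) = T \times \Zeta(U)$ together with $\Omega_1(A\times B) = \Omega_1(A)\times\Omega_1(B)$ and $\Omega_1(T)=T$ gives $\Soc(G) = \Omega_1(\Zeta(G)) = T\times\Omega_1(\Zeta(U)) = T\times\Soc(U)$.

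For uniqueness I would first extract the maximality of $T$ from the definition. Combining the two equalities shows $\Soc(G)\cap\Frat(G) = \Soc(U)\cap\Frat(U)$, whence $|\Soc(G):\Soc(G)\cap\Frat(G)| = |T|\cdot|\Soc(U):\Soc(U)\cap\Frat(U)|$; comparing this with Condition~\ref{item:T} forces $\Soc(U) \le \Frat(U)$. In particular $U$ has no direct factor isomorphic to $C_p$, since any such factor would lie in $\Soc(U)$ but not in $\Frat(U)$. Given two elementary decompositions $G = T_1\times U_1 = T_2\times U_2$, the $T_i$ are elementary abelian of the common order $|\Soc(G):\Soc(G)\cap\Frat(G)|$, hence $T_1\cong T_2$. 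Refining each decomposition into indecomposable direct factors, the factors isomorphic to $C_p$ are exactly those coming from $T_i$, so by the Krull--Schmidt theorem the remaining indecomposable factors, namely those constituting $U_1$ and those constituting $U_2$, match up in pairs, giving $U_1 \cong U_2$.

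The main obstacle is precisely this uniqueness of $U$ up to isomorphism: equality of orders alone is insufficient, and the argument genuinely depends both on the maximality characterization $\Soc(U)\le\Frat(U)$, which guarantees that $U$ contributes no $C_p$ direct factor, and on invoking Krull--Schmidt to cancel the elementary abelian part cleanly. By contrast, existence and the two equalities are comparatively routine, the only points requiring care being the passage to $G/\Frat(G)$ used to build $U$ and the standard behaviour of $\Frat$, $\Zeta$, and $\Omega_1$ under direct products.
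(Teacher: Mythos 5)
Your proof is correct and follows essentially the same route as the paper: take $T$ to be a complement of $\Soc(G)\cap\Frat(G)$ in $\Soc(G)$, lift a complement from $G/\Frat(G)$ to obtain $U$ (note that your $T\Frat(G)$ equals the paper's $\Soc(G)\Frat(G)$), and invoke the Krull--Remak--Schmidt theorem for uniqueness. Your treatment of uniqueness is in fact more detailed than the paper's one-line appeal to Krull--Remak--Schmidt, since you first establish $\Soc(U)\le\Frat(U)$ to rule out direct factors of $U$ isomorphic to $C_p$ before cancelling the elementary abelian part.
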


\begin{proof}
	Since $\Soc(G)$ is elementary abelian, the subgroup $\Soc(G) \cap \Frat(G)$ has a complement $T$ in $\Soc(G)$.
	We take such $T$ and remark that $T \cap \Frat(G) = 1$ and $\Soc(G) = T(\Soc(G) \cap \Frat(G))$.
	Since $G/\Frat(G)$ is also elementary abelian, the subgroup $\Soc(G)\Frat(G)/\Frat(G)$ has a complement in $G/\Frat(G)$;
	we take it to be of the form $U/\Frat(G)$ with $U$ containing $\Frat(G)$.
	It follows that $\Soc(G)\Frat(G) \cap U = \Frat(G)$ and, the elements of $\Frat(G)$ being non-generators, that $G = \Soc(G)U$;
	see \cref{fig:elem}.
	\begin{figure}[htpb]
		\includegraphics{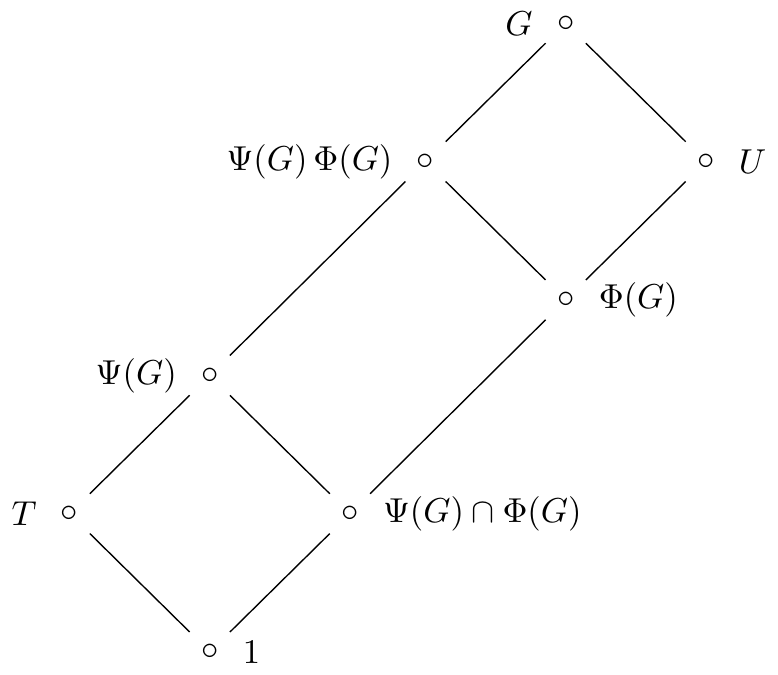}
		\caption{Subgroups involved in an elementary decomposition.}
		\label{fig:elem}
	\end{figure}
	
	\noindent
	It is now easy to deduce
	that $G = TU$ and $T \cap U = 1$.
	Since $T$ is central, we obtain $G = T \times U$ and this decomposition is elementary.
	The uniqueness of $T$ and $U$, up to isomorphism,  follows from the Krull-Remak-Schmidt theorem.
	In conclusion, since $T$ is an elementary abelian direct factor of $G$, we have $\Soc(G) = T \times \Soc(U)$ and $\Frat(G) = \Frat(U)$.
\end{proof}

\begin{rmk}
	\label{rmk:p-stem}
	Let $G = T \times U$ be an elementary decomposition of $G$ with $T$ elementary abelian.
	Note that there are natural isomorphisms $G/\Soc(G) \to U/\Soc(U)$ and $\Frat(G) \to \Frat(U)$.
	Furthermore, these are compatible with $p$-th power maps $G/\Soc(G) \to \Frat(G)$ and $U/\Soc(U) \to \Frat(U)$, i.e.\ the following diagram commutes.
	\begin{equation*} \begin{tikzcd}
        G/\Soc(G) \arrow[r] \arrow[d]
        &
        U/\Soc(U) \arrow[d]
        \\
        \Frat(G) \arrow[r]
        &
        \Frat(U)
    \end{tikzcd} \end{equation*}
	Roughly speaking, $G$ and $U$ have the same power structure.
\end{rmk}

\section{A reduction theorem}\label{sec:reduction}

In this section we prove that elementary abelian direct factors can be disregarded in the study of the modular isomorphism problem. More precisely, we prove the following generalization of \cref{main:reduction} from the Introduction.

\begin{thm}\label{thm:reduction}
	Let $G = T \times U$ and $H = S \times V$ be elementary decompositions of $G$ and $H$, with $T$ and $S$ elementary abelian.
	Then the following are equivalent:
	\begin{enumerate}
	\item\label{it:reduction1} $\FF G\cong \FF H$.
	\item\label{it:reduction2} $\FF T \cong \FF S \textup{ and } \FF U \cong \FF V$.
	\end{enumerate}
\end{thm}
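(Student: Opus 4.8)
The plan is to prove the two implications separately, the direction $\ref{it:reduction2}\Rightarrow\ref{it:reduction1}$ being routine and the direction $\ref{it:reduction1}\Rightarrow\ref{it:reduction2}$ carrying all the weight. For $\ref{it:reduction2}\Rightarrow\ref{it:reduction1}$ I would simply invoke the canonical isomorphism $\FF[A\times B]\cong\FF A\otimes_\FF\FF B$: it gives $\FF G\cong\FF T\otimes_\FF\FF U$ and $\FF H\cong\FF S\otimes_\FF\FF V$, so that $\FF T\cong\FF S$ and $\FF U\cong\FF V$ at once produce $\FF G\cong\FF H$.

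So assume $\ref{it:reduction1}$, say $\phi\colon\FF G\to\FF H$ is an algebra isomorphism, and fix $E\cong T$. The first step is to show $\FF T\cong\FF S$, for which it suffices to prove $|T|=|S|$ since both groups are elementary abelian. Here I would read off $|T|$ from the invariants already at our disposal. Indeed, \cref{dfn:decomp} gives $|T|=|\Soc(G):\Soc(G)\cap\Frat(G)|$; by \cref{thm:WS} the isomorphism type of $\Zeta(G)$, and hence of $\Soc(G)=\Omega_1(\Zeta(G))$, is an invariant of $\FF G$, while $|\Soc(G)\cap\Frat(G)|$ is an invariant by \cref{cor:invariants}. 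Therefore $|T|$ is an invariant of $\FF G$; comparing with $H$ yields $|T|=|S|$, so $T\cong S$ and $\FF T\cong\FF S$.

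The heart of the matter is the remaining claim $\FF U\cong\FF V$. Writing $T=\gen{t_1,\dots,t_d}$, I would note that $\FF U\cong\FF G/\Ideal(\FF T)\FF G$ with $\Ideal(\FF T)\FF G=(t_1-1,\dots,t_d-1)$, so $\phi$ gives $\FF U\cong\FF H/(\phi(t_1-1),\dots,\phi(t_d-1))$, whereas $\FF V\cong\FF H/(s_1-1,\dots,s_d-1)$. The elements $\phi(t_i-1)$ are central in $\FF H$ (as the $t_i$ are central in $G$) and satisfy $\phi(t_i-1)^p=0$, exactly as the $s_j-1$ do. More structurally, transporting the decomposition $\FF G=\FF T\otimes\FF U$ through $\phi$ exhibits a \emph{second} tensor factorisation $\FF H\cong\phi(\FF T)\otimes\phi(\FF U)$ whose first factor $\phi(\FF T)\cong\FF E$ is central and whose second factor satisfies $\phi(\FF U)\cong\FF U$. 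Thus $\FF H$ carries two tensor decompositions $\FF H\cong\FF S\otimes\FF V\cong\phi(\FF T)\otimes\phi(\FF U)$ with central first factors isomorphic to $\FF E$, and the problem reduces to showing that their complementary factors are isomorphic.

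This reduction is the step I expect to be the main obstacle, since tensor factorisations of finite-dimensional algebras need not be unique. The leverage must come from the hypothesis that both complements, $\FF V$ and $\phi(\FF U)\cong\FF U$, are modular group algebras of $p$-groups \emph{without} elementary abelian direct factors; by \cref{lem:exists} this amounts to $\Soc\subseteq\Frat$ for the relevant groups, which is precisely what makes the central factor $\FF E$ maximal. Concretely, I would aim to prove a uniqueness statement for this maximal central elementary-abelian tensor factor of $\FF H$: that any two central subalgebras of $\FF H$ isomorphic to $\FF E$ and admitting such a complement are carried to one another by an automorphism of $\FF H$. Composing such an automorphism with $\phi$ would then send $\Ideal(\FF T)\FF G$ onto $\Ideal(\FF S)\FF H$ and induce the desired isomorphism $\FF U\cong\FF V$. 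To establish the uniqueness I would exploit the explicit description and invariance of $\Zeta(\FF H)$ from \cref{lem:center} and \cref{thm:WS}, which control the central $p$-nilpotent elements where the factor $\FF E$ lives, together with the matching of power structures recorded in \cref{rmk:p-stem}.
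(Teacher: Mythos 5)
Your easy direction and your proof that $T\cong S$ coincide with the paper's (the tensor decomposition of $\FF[T\times U]$; invariance of $\Soc(G)=\Omega_1(\Zeta(G))$ via \cref{thm:WS} and of $\Soc(G)\cap\Frat(G)$ via \cref{cor:invariants}), and passing to $\FF U\cong\FF G/\Ideal(\FF T)\FF G$ is also how the paper begins the hard implication. The gap is exactly the step you flag as the main obstacle: the uniqueness, up to an automorphism of $\FF H$, of a maximal central elementary abelian tensor factor. You announce this claim but give no argument for it, and the tools you name are not adequate: \cref{rmk:p-stem} is a purely group-theoretic remark comparing $G$ with $U$ and says nothing about two competing factorizations of the single algebra $\FF H$, while \cref{lem:center} and \cref{thm:WS} control only the center of $\FF H$, not how a central subalgebra sits relative to a complement. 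Since tensor factorizations of finite-dimensional algebras are in general not unique, and since this uniqueness claim \emph{is} the substance of the theorem (everything else in your outline is formal), the proposal as written does not constitute a proof.

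The paper avoids any uniqueness-up-to-automorphism statement. Its key device (\cref{lem:char}) is a class of ideals cut out by two conditions that are manifestly transported by any algebra isomorphism: first, $\codim{I} = |G|/|\Soc(G):\Soc(G)\cap\Frat(G)|$, a number that is an invariant of $\FF G$ by \cref{thm:WS} and \cref{cor:invariants}; second, for all $n>0$,
\begin{equation*}
	I\,\Ideal(\FF G)^{n-1}+\Ideal(\FF G)^{n+1} \ = \ \Ideal(\FF\Soc(G))\,\Ideal(\FF G)^{n-1}+\Ideal(\FF G)^{n+1},
\end{equation*}
whose right-hand side is canonical because $\Ideal(\FF\Soc(G))\FF G+\Ideal(\FF G)^2=\Omega_1(\Zeta(\FF G))+\Ideal(\FF G)^2$ by \cref{lem:augS}. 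A downward induction along the powers of the augmentation ideal then shows that \emph{every} ideal $I$ satisfying these two conditions is a vector space complement of $\FF U$ in $\FF G$, so $\FF G/I\cong\FF U$ independently of which such $I$ one takes; and $I=\Ideal(\FF T)\FF G$ does satisfy them by \cref{lem:augT}. Consequently $\varphi(I)$ satisfies the same conditions in $\FF H$, whence $\FF V\cong\FF H/\varphi(I)\cong\FF G/I\cong\FF U$, with no need to relate $\varphi(\FF T)$ to $\FF S$ at all. If you want to complete your argument, this is the idea to import: replace ``the two central factors are conjugate by an automorphism'' by ``the quotient by any ideal in a canonically defined class is independent of the choice,'' which is both weaker and provable by the filtration argument above.
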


Until the end of the present section, we write $\II=\Ideal(\FF G)$ for the augmentation ideal of $\FF G$.

\begin{lem}
	\label{lem:filt}
	Let $K$ and $L$ be normal  subgroups of $G$ such that $G$ equals the internal direct product $K\times L$ of $K$ and $L$.
	Let, moreover, $n$ be a positive integer.
	Then the following equality holds:
	\begin{equation*}
		\II^n = \Ideal(\FF K)\II^{n - 1} \oplus \Ideal(\FF L)^n.
	\end{equation*}
\end{lem}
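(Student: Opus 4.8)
The plan is to pass to the tensor description $\FF G\cong\FF K\otimes_\FF\FF L$ supplied by the multiplication map, which is available precisely because $G=K\times L$, and to carry out every computation in terms of the augmentation ideals of the two factors. Write $\mathfrak a=\Ideal(\FF K)$ and $\mathfrak b=\Ideal(\FF L)$, so that $\FF K=\FF\oplus\mathfrak a$ and $\FF L=\FF\oplus\mathfrak b$ as augmented algebras. Under the identification, the subspace $\Ideal(\FF K)$ of $\FF G$ becomes $\mathfrak a\otimes1$, the subspace $\Ideal(\FF L)$ becomes $1\otimes\mathfrak b$, and its relative version $\Ideal(\FF K)\FF G$ becomes $\mathfrak a\otimes\FF L$; since $\FF G\,\II^{n-1}=\II^{n-1}$ we then have $\Ideal(\FF K)\II^{n-1}=\Ideal(\FF K)\FF G\cdot\II^{n-1}=(\mathfrak a\otimes\FF L)\,\II^{n-1}$. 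Finally, as $\II$ is the kernel of the augmentation and this map kills the $\mathfrak a$- and $\mathfrak b$-parts, the case $n=1$ reads
\[
  \II=(\mathfrak a\otimes\FF L)+(\FF K\otimes\mathfrak b).
\]

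The key computation I would carry out first is the binomial-type expansion
\[
  \II^n=\sum_{i=0}^{n}\mathfrak a^{i}\otimes\mathfrak b^{n-i},
\]
with the conventions $\mathfrak a^0=\FF K$ and $\mathfrak b^0=\FF L$, proved by induction on $n$ with the displayed identity as base case. For the inductive step I multiply the two summands $\mathfrak a\otimes\FF L$ and $\FF K\otimes\mathfrak b$ of $\II$ into $\II^{n-1}=\sum_i\mathfrak a^{i}\otimes\mathfrak b^{n-1-i}$; using that the two tensor slots multiply independently and that $\mathfrak a,\mathfrak b$ are ideals of $\FF K,\FF L$ (so $\FF K\cdot\mathfrak a^i=\mathfrak a^i$ and $\mathfrak b\cdot\mathfrak b^j=\mathfrak b^{j+1}$, etc.), the first summand contributes the terms with $i=1,\dots,n$ and the second those with $i=0,\dots,n-1$, jointly covering $i=0,\dots,n$.

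With the expansion in hand I read off
\[
  \Ideal(\FF K)\II^{n-1}=\sum_{i=1}^{n}\mathfrak a^{i}\otimes\mathfrak b^{n-i}
  \qquad\text{and}\qquad
  \Ideal(\FF L)^n=1\otimes\mathfrak b^{n}.
\]
To see these sum to $\II^n$, the only term of the expansion not literally present is the full $i=0$ block $\FF K\otimes\mathfrak b^n$; decomposing $\FF K=\FF\oplus\mathfrak a$ gives $\FF K\otimes\mathfrak b^n=(1\otimes\mathfrak b^n)\oplus(\mathfrak a\otimes\mathfrak b^n)$, whose first part is $\Ideal(\FF L)^n$ and whose second part lies in $\mathfrak a\otimes\mathfrak b^{n-1}\subseteq\Ideal(\FF K)\II^{n-1}$ because $\mathfrak b^n\subseteq\mathfrak b^{n-1}$. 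For directness I note that $\Ideal(\FF K)\II^{n-1}\subseteq\mathfrak a\otimes\FF L$ whereas $\Ideal(\FF L)^n\subseteq1\otimes\FF L$, and that these two spaces meet only in $0$ inside $\FF G=(1\otimes\FF L)\oplus(\mathfrak a\otimes\FF L)$, again by $\FF K=\FF\oplus\mathfrak a$.

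The main obstacle is exactly this last matching: one should not expect $\Ideal(\FF L)^n$ to account for the whole $i=0$ block $\FF K\otimes\mathfrak b^n$, and the stated identity holds only because the excess $\mathfrak a\otimes\mathfrak b^n$ is absorbed into $\Ideal(\FF K)\II^{n-1}$ through the larger term $\mathfrak a\otimes\mathfrak b^{n-1}$. Pinning down this bookkeeping—and thereby the precise shape of the right-hand side—is the delicate point, presumably the source of the imprecision noted in the acknowledgements; the remaining ingredients, namely the tensor expansion and the verification that the sum is direct, are routine.
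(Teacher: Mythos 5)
Your proof is correct, and it reaches the lemma by a genuinely different organization than the paper's. The paper never leaves $\FF G$: it inducts directly on the asymmetric statement itself, obtaining the base case $\II = \Ideal(\FF K)\FF G \oplus \Ideal(\FF L)$ from the identity $kl - 1 = (k-1)l + (l-1)$ together with \cref{lem:quotient} (which also yields directness), and the inductive step by multiplying the hypothesis for $n-1$ with the case $n=1$. You instead transport everything through the identification $\FF G \cong \FF K \otimes_\FF \FF L$ of \cite[Chapter 1, Lemma 3.4]{Pas77} (which the paper itself invokes only later, in the proof of \cref{thm:reduction}) and first establish the stronger, symmetric bigraded expansion $\II^n = \sum_{i=0}^{n} \mathfrak{a}^{i} \otimes \mathfrak{b}^{n-i}$, where $\mathfrak{a} = \Ideal(\FF K)$, $\mathfrak{b} = \Ideal(\FF L)$, $\mathfrak{a}^0 = \FF K$, $\mathfrak{b}^0 = \FF L$; the lemma then follows from your absorption bookkeeping, splitting the $i=0$ block as $\FF K \otimes \mathfrak{b}^{n} = (1 \otimes \mathfrak{b}^{n}) \oplus (\mathfrak{a} \otimes \mathfrak{b}^{n})$ and swallowing the excess $\mathfrak{a} \otimes \mathfrak{b}^{n}$ into the $i=1$ term $\mathfrak{a} \otimes \mathfrak{b}^{n-1}$ of $\Ideal(\FF K)\II^{n-1}$. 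Both arguments are inductions on $n$ anchored at the same $n=1$ decomposition, so the mechanics are parallel; the difference is which induction hypothesis is carried. Your version buys a finer description of $\II^n$ and makes directness transparent from the vector-space splitting $\FF G = (1 \otimes \FF L) \oplus (\mathfrak{a} \otimes \FF L)$, at the cost of the extra matching you rightly flag as the delicate point; the paper's version is leaner, needs no tensor identification, and its two-term induction hypothesis is exactly the statement to be proved. Note also that your slot-by-slot multiplication rules (such as $\FF K \cdot \mathfrak{a}^{i} = \mathfrak{a}^{i}$ and the equality of the two mixed products of $\mathfrak{a} \otimes \FF L$ and $\FF K \otimes \mathfrak{b}$) rest on the powers of the augmentation ideals being two-sided ideals and on $K$ and $L$ commuting elementwise — this is precisely where the direct-product hypothesis enters your argument, playing the role that normality of $K$ and \cref{lem:quotient} play in the paper's.
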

\begin{proof}
	We work by induction on $n$. Assume first that $n = 1$. Then, for each $k \in K$ and $l \in L$, one has $kl - 1 = (k - 1)l + (l - 1)$: this  proves
	$\II = \Ideal(\FF K)\FF G + \Ideal(\FF L)$. The last sum is direct thanks to \cref{lem:quotient}.

	Assume now that $n > 1$ and that $\II^{n-1} = \Ideal(\FF K)\II^{n - 2} \oplus \Ideal(\FF L)^{n-1}$.
	Then, the inclusion
	$\II^n \supseteq \Ideal(\FF K) \II^{n - 1} \oplus \Ideal(\FF L)^n$
	immediately follows from the definition of $\II$, while the opposite inclusion is obtained from combining $\II^n = \II^{n - 1}\II$, the induction hypothesis, and the case $n=1$.
\end{proof}

\begin{lem}\label{lem:augS}
	The following hold:
	\begin{enumerate}
		\item\label{item:Frat} $\Ideal(\FF \Frat(G))\FF G \subseteq \II^2$.
		\item\label{item:Soc} $\Ideal(\FF \Soc(G))\FF G + \II^2 = \Omega_1(\Zeta(\FF G)) + \II^2$.
	\end{enumerate}
\end{lem}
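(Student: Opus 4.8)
The plan is to treat the two statements separately, since part \ref{item:Frat} follows at once from a canonical description of $\Ideal(\FF\Frat(G))\FF G$, whereas part \ref{item:Soc} needs an argument for each inclusion. For \ref{item:Frat}, I would apply \cref{lem:augA*} with $n=1$. As $\Agemo^*_1(G)=\Frat(G)$, this presents $\Ideal(\FF\Frat(G))\FF G$ as the smallest ideal of $\FF G$ containing both $\Comm(\FF G)$ and $\Agemo_1(\II)$, so it suffices to check that the ideal $\II^2$ contains both generating sets. Indeed $\Comm(\FF G)\subseteq\II^2$ by \cref{lem:center}\ref{item:KJ2}, and for every $x\in\II$ one has $x^p\in\II^p\subseteq\II^2$ because $p\geq 2$, whence $\Agemo_1(\II)\subseteq\II^2$. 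Being an ideal containing these generators, $\II^2$ contains the smallest such ideal, which is the claim.

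For \ref{item:Soc} I would first record that $\Omega_1(\Zeta(\FF G))$ is an $\FF$-subspace: the $p$-th power map is additive on the commutative algebra $\Zeta(\FF G)$, so its kernel is a subspace. For the inclusion $\subseteq$, note that every $h\in\Soc(G)=\Omega_1(\Zeta(G))$ is central of order dividing $p$, so $h-1\in\Zeta(\FF G)$ and $(h-1)^p=h^p-1=0$, giving $h-1\in\Omega_1(\Zeta(\FF G))$. Since modulo $\II^2$ the ideal $\Ideal(\FF\Soc(G))\FF G$ is spanned by the elements $h-1$ with $h\in\Soc(G)$ (any coefficient from $\FF G$ contributes only its augmentation, the remainder landing in $\II^2$), and $\Omega_1(\Zeta(\FF G))$ is a subspace, we obtain $\Ideal(\FF\Soc(G))\FF G\subseteq\Omega_1(\Zeta(\FF G))+\II^2$.

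For the inclusion $\supseteq$, I would take $x\in\Omega_1(\Zeta(\FF G))$ and decompose it by \cref{lem:center}\ref{item:FZG} as $x=y+w$ with $y\in\FF\Zeta(G)$ and $w\in\Zeta(\FF G)\cap\Comm(\FF G)\subseteq\II^2$. The key step is to show $y^p=0$: by \cref{lem:dream} one has $0=x^p\equiv y^p+w^p\pmod{\Comm(\FF G)}$, and $w^p\in\Zeta(\FF G)\cap\Comm(\FF G)$ since the latter is an ideal of $\Zeta(\FF G)$ by \cref{lem:center}\ref{item:ideal}; hence $y^p\in\FF\Zeta(G)\cap\Comm(\FF G)$, which is $0$ by the directness of the decomposition in \cref{lem:center}\ref{item:FZG}. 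Thus $y\in\Omega_1(\FF\Zeta(G))$, and \cref{lem:powers}, applied to the abelian group $\Zeta(G)$ with $m=0$ and $n=1$, gives $\Omega_1(\FF\Zeta(G))\subseteq\Ideal(\FF\Soc(G))\FF\Zeta(G)\subseteq\Ideal(\FF\Soc(G))\FF G$. Therefore $x=y+w\in\Ideal(\FF\Soc(G))\FF G+\II^2$, which finishes the inclusion.

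I expect the inclusion $\supseteq$ to be the main obstacle. An arbitrary central element killed by the $p$-th power is supported not only on group elements but also on non-central class sums, so one cannot directly read off a socle element; instead one must project onto $\FF\Zeta(G)$ and then verify that this projection is still annihilated by the $p$-th power before the abelian computation of \cref{lem:powers} becomes applicable. The precise role of \cref{lem:center}\ref{item:ideal} and \ref{item:FZG} is to control the class-sum remainder $w$ under $p$-th powers and to guarantee that the surviving part $y$ genuinely lies in $\Omega_1(\FF\Zeta(G))$.
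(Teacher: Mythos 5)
Your proof is correct and takes essentially the same route as the paper. For \ref{item:Frat}, your appeal to \cref{lem:augA*} with $n=1$ simply packages the combination of \cref{lem:commutative,lem:product,lem:dream} that the paper cites directly; for \ref{item:Soc}, your decomposition $x=y+w$ via \cref{lem:center}\ref{item:FZG} is the coordinate-free form of the paper's explicit decomposition into central group elements and non-central class sums via \cref{lem:center}\ref{item:Z}, and both arguments then conclude identically by applying \cref{lem:powers} with $m=0$ and $n=1$ to $\Zeta(G)$.
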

\begin{proof}
	\ref{item:Frat} This follows from combining \cref{lem:commutative,lem:product,lem:dream}.

	\ref{item:Soc} It is easy to show that $\Ideal(\FF \Soc(G))\FF G + \II^2 \subseteq \Omega_1(\Zeta(\FF G)) + \II^2$.
	To show the converse, it suffices to prove that $\Omega_1(\Zeta(\FF G)) \subseteq \Ideal(\FF \Soc(G))\FF G + \II^2$.
	To this end, take an element $x \in \Omega_1(\Zeta(\FF G))$ and, thanks to \cref{lem:center}\ref{item:Z}, write
	\begin{equation*}
		x = \sum_{z \in \Zeta(G)} \alpha_z z + \sum_{\substack{\kappa \in \cc(G)\\ |\kappa| \neq 1}} \beta_\kappa \hat\kappa
	\end{equation*}
for scalars $\alpha_z$, $\beta_\kappa \in \FF$.
By \cref{lem:center}\ref{item:KJ2} and \ref{item:ZK}, the second summand on the right-hand side  belongs to $\II^2$; we will prove that the first summand belongs to $\Ideal(\FF \Soc(G))\FF G$.
Now,	 from $x^p = 0$ and \cref{lem:center}\ref{item:ZK}, we derive that
\[
\sum_{z \in \Zeta(G)} \alpha_z z \in \Omega_1(\FF \Zeta(G)).
\]
Taking $m=0$ and $n=1$ in	\cref{lem:powers}, with $\Zeta(G)$ in the role of $G$,  yields that $\Omega_1(\FF \Zeta(G))\subseteq\Ideal(\FF \Soc(G)) \FF G$ and thus the claim.
\end{proof}

\begin{lem}
	\label{lem:augT}
	Let $G = T \times U$ be an elementary decomposition of $G$ with $T$ elementary abelian. Let, moreover, $n$ be a positive integer.
	Define $I = \Ideal(\FF T)\FF G$.
	Then the following hold:
\begin{enumerate}
\item\label{it:augT1} $\codim I = |G|/|\Soc(G) : \Soc(G) \cap \Frat(G)|.$
\item\label{it:augT2} $I \II^{n - 1} + \II^{n + 1} = \Ideal(\FF \Soc(G)) \II^{n - 1} + \II^{n + 1}$. 
\end{enumerate}	
\end{lem}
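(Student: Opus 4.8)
The plan is to treat the two statements separately: \ref{it:augT1} is a direct dimension count, while \ref{it:augT2} reduces, via the structural input of \cref{lem:exists}, to a single group-theoretic inclusion.

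For \ref{it:augT1}, I would use that $T$ is central, hence normal, so that \cref{lem:quotient} identifies $I = \Ideal(\FF T)\FF G$ with the kernel of the natural surjection $\FF G \to \FF[G/T]$. Therefore $\FF G/I \cong \FF[G/T]$ as vector spaces and $\codim I = \dim \FF[G/T] = |G/T| = |G|/|T|$. It then remains only to substitute $|T| = |\Soc(G):\Soc(G)\cap\Frat(G)|$, which is precisely \cref{dfn:decomp}\ref{item:T}.

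For \ref{it:augT2}, I would prove the two inclusions separately. The inclusion from left to right is immediate: as $T \subseteq \Soc(G)$ we have $I = \Ideal(\FF T)\FF G \subseteq \Ideal(\FF\Soc(G))\FF G$, whence $I\,\II^{n-1} \subseteq \Ideal(\FF\Soc(G))\,\II^{n-1}$. For the reverse inclusion, the idea is to split off the part of $\Soc(G)$ coming from $U$. By \cref{lem:exists} one has $\Soc(G) = T \times \Soc(U)$, and \cref{lem:product} gives $\Ideal(\FF\Soc(G))\FF G = \Ideal(\FF T)\FF G + \Ideal(\FF\Soc(U))\FF G = I + \Ideal(\FF\Soc(U))\FF G$. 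Multiplying by $\II^{n-1}$ and using $\FF G\,\II^{n-1} = \II^{n-1}$ yields $\Ideal(\FF\Soc(G))\,\II^{n-1} = I\,\II^{n-1} + \Ideal(\FF\Soc(U))\FF G\,\II^{n-1}$, so it suffices to show $\Ideal(\FF\Soc(U))\FF G\,\II^{n-1} \subseteq \II^{n+1}$, for which in turn it is enough to prove $\Ideal(\FF\Soc(U))\FF G \subseteq \II^2$.

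The crux, and the step I expect to be the main obstacle, is thus the purely group-theoretic claim that $\Soc(U) \subseteq \Frat(G)$; granting it, \cref{lem:augS}\ref{item:Frat} together with $\Frat(G) = \Frat(U)$ gives $\Ideal(\FF\Soc(U))\FF G \subseteq \Ideal(\FF\Frat(G))\FF G \subseteq \II^2$, hence $\Ideal(\FF\Soc(U))\FF G\,\II^{n-1}\subseteq\II^2\II^{n-1}=\II^{n+1}$, which finishes the proof. To establish the claim I would argue by orders. Using $T \cap \Frat(G) = 1$, $\Frat(G) = \Frat(U) \le U$ and $\Soc(G) = T \times \Soc(U)$ from \cref{lem:exists}, every element of $\Soc(G)\cap\Frat(G)$ is forced to lie in $U$, so $\Soc(G)\cap\Frat(G) = \Soc(U)\cap\Frat(U)$. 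On the other hand, \cref{dfn:decomp}\ref{item:T} exhibits $T$ (which lies in $\Soc(G)$ by \cref{lem:exists}) as a complement to $\Soc(G)\cap\Frat(G)$ in $\Soc(G)$, so $|\Soc(G)\cap\Frat(G)| = |\Soc(G)|/|T| = |\Soc(U)|$. Comparing the two facts shows $\Soc(U)\cap\Frat(U) = \Soc(U)$, that is $\Soc(U)\subseteq\Frat(U)=\Frat(G)$, as required.
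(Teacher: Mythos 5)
Your proof is correct and follows essentially the same route as the paper: part \ref{it:augT1} via \cref{lem:quotient} and \cref{dfn:decomp}, and part \ref{it:augT2} by writing $\Ideal(\FF\Soc(G))\FF G = I + \Ideal(\FF\Soc(U))\FF G$ using $\Soc(G)=T\times\Soc(U)$ and \cref{lem:product}, then absorbing $\Ideal(\FF\Soc(U))\FF G$ into $\II^2$ via $\Soc(U)\subseteq\Frat(U)=\Frat(G)$ and \cref{lem:augS}\ref{item:Frat}. The only difference is that you give an explicit order-counting proof of the inclusion $\Soc(U)\subseteq\Frat(U)$, which the paper asserts without proof; your argument for it is valid and fills in a detail the paper leaves implicit.
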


\begin{proof}
As \ref{it:augT1} follows immediately from \cref{lem:quotient}, we show \ref{it:augT2}.
From the fact that $\Soc(U) \subseteq \Frat(U)$ and \cref{lem:exists}, we derive
	\begin{align*}
		I + \II^2
		&= \Ideal(\FF T)\FF G + \II^2 \\
		&= \Ideal(\FF T)\FF G + \Ideal(\FF \Soc(U))\FF G + \II^2 \\
		&= \Ideal(\FF \Soc(G))\FF G + \II^2.
	\end{align*}
It now follows from \cref{lem:product,lem:augS} that 
	\begin{align*}
		I \II^{n - 1} + \II^{n + 1}
		& = (I + \II^2)\II^{n - 1}
		= (\Ideal(\FF \Soc(G))\FF G + \II^2) \II^{n - 1}\\
		&= \Ideal(\FF \Soc(G)) \II^{n - 1} + \II^{n + 1}.
		\qedhere
	\end{align*}
\end{proof}

\begin{lem}
	\label{lem:char}
	Let $G = T \times U$ be an elementary decomposition of $G$ with $T$ elementary abelian.
	Let $I$ be an ideal of $\FF G$ satisfying
	\begin{enumerate}
\item\label{it:char1} $\codim I = |G|/|\Soc(G) : \Soc(G) \cap \Frat(G)|$ and
\item\label{it:char2} for each integer $n>0$, one has $I \II^{n - 1} + \II^{n + 1} = \Ideal(\FF \Soc(G)) \II^{n - 1} + \II^{n + 1}$. 
\end{enumerate}	
	Then one has $\FF G/I \cong \FF U$.
\end{lem}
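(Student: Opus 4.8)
The plan is to obtain $\FF G/I$ as a homomorphic image of the subalgebra $\FF U$ of $\FF G$. I would consider the algebra homomorphism $\psi\colon \FF U \hookrightarrow \FF G \twoheadrightarrow \FF G/I$ given by composing the natural inclusion with the canonical projection. By Condition~\ref{it:char1} and \cref{dfn:decomp}\ref{item:U} one has $\dim(\FF G/I) = \codim I = |U| = \dim \FF U$, so $\psi$ will be an isomorphism as soon as it is surjective. Hence everything reduces to establishing the single equality $\FF U + I = \FF G$.

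To prove this equality I would use the filtration of $\FF G$ by the powers of its augmentation ideal $\II = \Ideal(\FF G)$, which is nilpotent, say $\II^N = 0$. Since $1 \in \FF U$ and $\FF G = \FF\,1 + \II$, it suffices to show $\II \subseteq \FF U + I$, and this I would extract, by downward induction on $n$, from the single-degree inclusion $\II^n \subseteq \FF U + I + \II^{n+1}$, valid for every $n \geq 1$. Indeed, $\II^N = 0$ provides the base case, and if $\II^{n+1} \subseteq \FF U + I$ then the displayed inclusion upgrades at once to $\II^n \subseteq \FF U + I$.

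The core of the proof is therefore the inclusion $\II^n \subseteq \FF U + I + \II^{n+1}$, and here I would combine \cref{lem:filt} with Condition~\ref{it:char2}. Applied to $G = T \times U$, \cref{lem:filt} gives $\II^n = \Ideal(\FF T)\II^{n-1} \oplus \Ideal(\FF U)^n$, and the summand $\Ideal(\FF U)^n$ lies in the subalgebra $\FF U$; so it only remains to show $\Ideal(\FF T)\II^{n-1} \subseteq I + \II^{n+1}$. As $T$ is a central elementary abelian subgroup it satisfies $T \leq \Soc(G)$, whence $\Ideal(\FF T) \subseteq \Ideal(\FF \Soc(G))$ and $\Ideal(\FF T)\II^{n-1} \subseteq \Ideal(\FF \Soc(G))\II^{n-1}$. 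Condition~\ref{it:char2} now identifies the latter, modulo $\II^{n+1}$, with $I\II^{n-1} + \II^{n+1} \subseteq I + \II^{n+1}$, which yields the required inclusion.

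The main difficulty to keep in mind is that $I$ is only assumed to satisfy the two conditions and need \emph{not} equal $\Ideal(\FF T)\FF G$; otherwise the conclusion would follow immediately from \cref{lem:quotient}. Consequently one cannot compare $I$ with $\Ideal(\FF T)\FF G$ by a direct containment, and the filtration argument is exactly what circumvents this: Condition~\ref{it:char2} matches $I$ against $\Ideal(\FF \Soc(G))\II^{n-1}$ only one graded layer at a time, and the downward induction is the mechanism that glues these layerwise comparisons into the single global identity $\FF U + I = \FF G$.
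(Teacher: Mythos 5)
Your proof is correct and takes essentially the same route as the paper's: a downward induction along the powers of $\II$, splitting $\II^n$ via \cref{lem:filt}, using Condition~\ref{it:char2} to exchange $\Ideal(\FF \Soc(G))\II^{n-1}$ for $I\II^{n-1}$, and concluding with the codimension count (the paper phrases the conclusion as $\FF G = I \oplus \FF U$ rather than through the map $\psi$, which amounts to the same thing). The only substantive difference is that, since you prove inclusions rather than the paper's layerwise equalities $\II^n = I\II^{n-1} + \Ideal(\FF U)^n$, you can replace the paper's invocation of \cref{lem:augT}\ref{it:augT2} by the elementary containment $T \le \Soc(G)$, which gives $\Ideal(\FF T)\II^{n-1} \subseteq \Ideal(\FF \Soc(G))\II^{n-1}$ directly --- a mild streamlining that bypasses the $\Soc(U) \subseteq \Frat(U)$ argument hidden in that lemma.
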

\begin{proof}
In order to prove the lemma, we will show that $\FF G = I \oplus \FF U$. To do this, we first prove that, for each positive integer $n$, one has
	\begin{equation}
		\label{eq:filt}
		\II^n = I \II^{n - 1} + \Ideal(\FF U)^n.
	\end{equation}
Let $c$ be minimal such that $\Delta^c=0$. Then, for each $n>c$, the equality in \eqref{eq:filt} clearly holds. Assume now that $0<n\leq c$: we prove \eqref{eq:filt} by induction on $c-n$. 		
If $n=c$, we are done because $I$ is contained in the unique maximal ideal $\II$. Suppose now that $n<c$ and that $\II^{n + 1} = I \II^n + \Ideal(\FF U)^{n + 1}$.
	From \cref{lem:filt,lem:augT} and \ref{it:char2} we then obtain
		\begin{align*}
		\II^n
		&= \II^n + \II^{n + 1} \\
		&= \Ideal(\FF T) \II^{n - 1} + \II^{n + 1} + \Ideal(\FF U)^n \\
		&= \Ideal(\FF \Soc(G)) \II^{n - 1} + \II^{n + 1} + \Ideal(\FF U)^n \\
		&= I \II^{n - 1} + \II^{n + 1} + \Ideal(\FF U)^n \\
		&= I \II^{n - 1} + I \II^n + \Ideal(\FF U)^{n + 1} + \Ideal(\FF U)^n \\
		&= I \II^{n - 1} + \Ideal(\FF U)^n.
	\end{align*}
	This completes the proof of \eqref{eq:filt}. In particular, from $n=1$, we obtain that $\II = I + \Ideal(\FF U)$ and thus $\FF G = I + \FF U$.
The last equality, together with the condition on the codimension of $I$, yields that $I\cap \FF U=0$ and hence we have $\FF G = I \oplus \FF U$.
\end{proof}

We conclude the current section by proving \Cref{thm:reduction}.
\begin{proof}[Proof of \cref{thm:reduction}]
Let $G = T \times U$ and $H = S \times V$ be elementary decompositions with $T$ and $S$ elementary abelian.

For the proof of $\ref{it:reduction2}\Rightarrow\ref{it:reduction1}$, assume that $\FF T \cong \FF S$ and $\FF U \cong \FF V$.
It follows from \cite[Chapter 1, Lemma 3.4]{Pas77} that
\[
\FF G =\FF [T \times U] \cong \FF T \otimes_\FF \FF U \cong  \FF S \otimes_\FF \FF V \cong \FF [S \times V]=\FF H.
\]

For the proof of $\ref{it:reduction1}\Rightarrow \ref{it:reduction2}$, let $\varphi:\FF G\rightarrow \FF H$ be an algebra isomorphism. Note that $T \cong \Soc(G)/\Soc(G) \cap \Frat(G)$ and $S \cong \Soc(H)/\Soc(H) \cap \Frat(H)$, so the combination of \cref{thm:WS} and \cref{cor:invariants} yields that $T \cong S$. Let $I$
be an ideal of $FG$ satisfying the conditions from \cref{lem:char}; such an ideal exists as a consequence of \cref{lem:augT}.
Now, as a consequence of \cref{lem:augS}\ref{item:Soc}, the ideal $\Ideal(\FF \Soc(G))\FF G + \II^2$ is canonical in $\FF G$ and, thanks to \cref{cor:invariants}, the quantity $|G|/|\Soc(G) : \Soc(G) \cap \Frat(G)|$ is an invariant of $\FF G$. It follows in particular that the ideal $\varphi(I)$ of $\FF H$ also satisfy the conditions from \cref{lem:char} and thus we have that
\[\FF U \cong \FF G/I \cong \FF H/\varphi(I) \cong \FF V.\]
The proof is complete.
\end{proof}

\section{Applications}
\label{sec:appl}

This section is devoted to particular instances in which our main results can be applied to solve the modular isomorphism problem. Since our aim is to work independently of the choice of the field, in \cref{sec:appl} we avoid the use of invariants which are only known to hold over the prime field. We remark that, even if working over $\F_p$ would shorten some arguments, the results here presented would still be new.  

\subsection{Infinitely to finitely many}
In the current section we positively solve the modular isomorphism problem for the following infinite family of $2$-groups of class $3$ by first reducing it to a finite analysis using our \cref{main:reduction} and subsequently applying old and new invariants, cf.\ \cref{main:invariants}.

\begin{thm}\label{thm:1st}
Assume $G$ has class $3$ and satisfies $|G:\Zeta(G)| = |\Frat(G)| = 8$. Then the modular isomorphism problem has a positive answer for $G$.
\end{thm}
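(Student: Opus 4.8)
The plan is to use \cref{thm:reduction} to replace $G$ by its elementary-abelian-free part, and then to separate a resulting \emph{finite} list of groups with the invariants of \cref{cor:invariants} and \cref{main:invariants}. Note first that $|\Frat(G)| = 8 = 2^3$ forces $p = 2$. Suppose $\FF G \cong \FF H$ and fix elementary decompositions $G = T \times U$ and $H = S \times V$ with $T$ and $S$ elementary abelian, as provided by \cref{lem:exists}. Then \cref{thm:reduction} yields $\FF T \cong \FF S$ and $\FF U \cong \FF V$; since $T$ and $S$ are elementary abelian of the same order they are isomorphic, so it suffices to prove $U \cong V$. As $T$ is a central elementary abelian direct factor, the hypotheses descend to $U$: one has $\Comm(U) = \Comm(G)$, $\Zeta(G) = T \times \Zeta(U)$ and, by \cref{lem:exists}, $\Frat(U) = \Frat(G)$; hence $U$ has class $3$, $|U : \Zeta(U)| = 8$ and $|\Frat(U)| = 8$.

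Next I would check that $V$ shares these properties and that only finitely many groups do. From $\FF U \cong \FF V$ we get $|U| = |V|$ and, by \cref{thm:WS}, $\Zeta(U) \cong \Zeta(V)$, whereas \cref{prop:Frat} gives $|\Frat(V)| = |\Frat(U)| = 8$; thus $|V : \Zeta(V)| = 8$ and $|\Frat(V)| = 8$. To see that $V$ again has class $3$, I would invoke the invariant $G/\Comm(G)\Zeta(G)$ of \cref{cor:invariants}: since $U$ has class $3$, the group $U/\Zeta(U)$ is non-abelian of order $8$, so $\Comm(U)\Zeta(U)/\Zeta(U)$ has order $2$ and $|U/\Comm(U)\Zeta(U)| = 4$; a group of class at most $2$ with centre of index $8$ would instead give a quotient of order $8$, so $V$ is not of class $\leq 2$, and since $|V : \Zeta(V)| = 8$ forces nilpotency class at most $3$, the group $V$ has class exactly $3$. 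For finiteness, freeness of $U$ from elementary abelian direct factors means $\Soc(U) = \Omega_1(\Zeta(U)) \subseteq \Frat(U)$, whence $|\Omega_1(\Zeta(U))| \leq 8$; also $\Agemo_1(\Zeta(U)) \subseteq \Agemo_1(U) \subseteq \Frat(U)$ gives $|\Agemo_1(\Zeta(U))| \leq 8$. As $\Zeta(U)$ is abelian, $|\Zeta(U)| = |\Omega_1(\Zeta(U))| \cdot |\Agemo_1(\Zeta(U))| \leq 64$, so $|U| = 8\,|\Zeta(U)| \leq 512$; moreover $U/\Zeta(U) \cong \mathrm{D}_8$ or $\mathrm{Q}_8$, which trims the list considerably.

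Finally I would carry out the finite analysis. Let $\mathcal{L}$ be the finite set of isomorphism classes of groups $X$ of order at most $512$ that are free of elementary abelian direct factors, have class $3$, and satisfy $|X : \Zeta(X)| = |\Frat(X)| = 8$; by the above, both $U$ and $V$ lie in $\mathcal{L}$. For each member of $\mathcal{L}$ one computes the field-independent invariants collected in \cref{cor:invariants}---in particular $\Zeta$, $G/\Comm(G)$, $\Zeta \cap \Comm$ and $\Zeta/\Zeta \cap \Comm$, together with the filtered families of \cref{main:invariants}---and verifies that no two non-isomorphic members of $\mathcal{L}$ agree on all of them. Since $\FF U \cong \FF V$ forces every such invariant of $U$ and $V$ to coincide, this gives $U \cong V$ and therefore $G = T \times U \cong S \times V = H$.

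The main obstacle is precisely this last verification: a priori two non-isomorphic members of $\mathcal{L}$ could share every invariant at our disposal, in which case one would need a further distinguishing invariant or a direct structural comparison exploiting the rigidity imposed by class $3$ together with $|U : \Zeta(U)| = 8$. Because the groups here are built to resemble the counterexamples of \cite{GLMdR22}, separation is not automatic; the substance of the theorem is exactly that the invariants of \cref{main:invariants} already suffice, which I would confirm by an explicit, computer-assisted enumeration of $\mathcal{L}$.
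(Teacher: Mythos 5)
Your proposal follows the same skeleton as the paper's proof: reduce to the elementary-abelian-free parts via \cref{thm:reduction}, transfer the hypotheses to the partner group (you use the invariant $G/\Comm(G)\Zeta(G)$ from \cref{cor:invariants}, the paper uses $G/\Comm(G)$ and $\Zeta(G)\cap\Comm(G)$; both work), bound $|U|$ to obtain a finite list, and finish by comparing invariants. Your first three steps are correct, but your bound $|U|\le 512$ is much weaker than the paper's: \cref{lem:ubd} uses $|G:\Zeta(G)\Frat(G)|=4$ together with the strictness coming from $\Agemo_1(\Zeta(G))\neq\Frat(G)$ in class $3$ to get $|U|\le 128$, so that the list of \cref{lem:finite} has only $13$ groups, whereas your bound would force a search through the more than $10^7$ groups of orders $256$ and $512$ as well. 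That, however, is only a matter of efficiency.

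The genuine gap is your final step, and it is not merely that the verification is deferred to a computer: the toolkit you propose is provably insufficient. Consider the groups $\Quat_{2^{2|3}}$ and $\Semi_{2^{2|3}}$ defined before \cref{thm:trichotomy}. They are non-isomorphic (this is exactly what the case $m>1$, $n>2$ of the proof of \cref{thm:2nd} establishes), they have class $3$, cyclic center of order $4$, central quotient $\Dih_8$, $|\Frat|=8$, and socle inside the Frattini subgroup, so both belong to the order-$32$ part of the list in \cref{lem:finite}. Yet they agree on \emph{every} invariant from \cref{main:invariants} and \cref{cor:invariants}: for both groups one computes $\Zeta\cong C_4$, $G/\Comm(G)\cong C_4\times C_2$, $|\Comm(G)|=4$, $\Soc(G)=\Zeta(G)\cap\Comm(G)\cong C_2$ with $\Soc(G)\subseteq\Comm(G)$, and $\Zeta(G)\subseteq\Frat(G)$; consequently the four families of \cref{main:invariants} take the values $C_4\times C_2$, $1$, $C_4$, $1$ for $n\le 1$ and stabilize at $C_2\times C_2$, $C_2$, $C_2$, $C_2$ for $n\ge 2$, identically for the two groups. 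This is precisely why the paper does \emph{not} use its new abelian invariants for the finite check: it invokes the field-independent but non-abelian invariants of \cref{prop:known} (K\"ulshammer's $k_n$, Quillen's $a_n$, and the number $e(G)$ of generators of the cohomology ring) together with $|\Soc(G)|$, and \cref{tbl:32,tbl:64} show that these suffice; note that in \cref{tbl:32} two pairs agree on $k_1$ and $a_2$ and are separated only by $e(G)$. The same obstruction is visible in the proof of \cref{thm:2nd}, where $\FF\Quat_{2^{m|n}}$ and $\FF\Semi_{2^{m|n}}$ are distinguished not by any invariant of \cref{main:invariants} but by a dimension-subgroup argument (whether $\Omega_1(\FF G)\subseteq\Ideal(\FF G)^2$). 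So your concluding assertion that ``the substance of the theorem is exactly that the invariants of \cref{main:invariants} already suffice'' is false, and your plan cannot be completed without importing invariants of the kind collected in \cref{prop:known}.
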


\begin{lem}
	\label{lem:ubd}
	Assume $G$ has class $3$ and satisfies $|G : \Zeta(G)| = |\Frat(G)| = p^3$. Then one has
	 $|G|/|\Soc(G) : \Soc(G) \cap \Frat(G)| < p^8$.
\end{lem}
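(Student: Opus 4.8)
The plan is to bound the quantity $|G|/|\Soc(G) : \Soc(G) \cap \Frat(G)|$ from above by controlling each factor under the hypotheses that $G$ has class $3$ with $|G:\Zeta(G)| = |\Frat(G)| = p^3$. Since $\Soc(G) = \Omega_1(\Zeta(G))$ is central and elementary abelian, the index $|\Soc(G) : \Soc(G) \cap \Frat(G)|$ measures how much of the socle escapes the Frattini subgroup; making this index large is what shrinks the target quantity. So the strategy is to first bound $|G|$ from above and then bound $|\Soc(G):\Soc(G)\cap\Frat(G)|$ from below, and combine the two.

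First I would bound $|G|$. Since $\Frat(G) = \Agemo_1(G)\Comm(G)$ has order $p^3$, in particular $\Comm(G) \leq \Frat(G)$ has order at most $p^3$; combined with the class-$3$ hypothesis this pins down the shape of the lower central series. The key leverage is the hypothesis $|G : \Zeta(G)| = p^3$: a classical bound (the analogue of the fact that $|\Comm(G)|$ is controlled by $|G:\Zeta(G)|$, e.g.\ via $|\Comm(G)| \le |G:\Zeta(G)|$-type estimates or the bound on $|G:\Zeta(G)|$ forcing $|G/\Zeta(G)|$ to be a specific small group) should force $G/\Zeta(G)$ to be one of finitely many groups of order $p^3$. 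I would then use that $G/\Frat(G)$ is elementary abelian of rank $d$, where $p^d = |G:\Frat(G)| = |G|/p^3$, and bound $d$ using $|G:\Zeta(G)| = p^3$: since $\Frat(G)\Zeta(G)/\Zeta(G) \leq G/\Zeta(G)$ has order dividing $p^3$ and $G/\Frat(G)\Zeta(G)$ injects into $(G/\Zeta(G))/(\text{its Frattini-image})$, the generator rank $d$ is constrained. This should yield an explicit upper bound on $|G|$, say $|G| \le p^6$ in the relevant case $p=2$ — or more generally a bound that makes the final inequality strict.

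Next I would bound the socle index from below. The crux is to show $\Soc(G)$ is not entirely contained in $\Frat(G)$, i.e.\ $|\Soc(G):\Soc(G)\cap\Frat(G)| \ge p$, or better, to quantify it. Because $G/\Frat(G)$ is elementary abelian and $\Soc(G) = \Omega_1(\Zeta(G))$, any central element of order $p$ that is part of a minimal generating set contributes to this index; the class-$3$ condition (so $G$ is genuinely non-abelian with a two-step center filtration) guarantees enough room. Assembling the pieces, $|G|/|\Soc(G):\Soc(G)\cap\Frat(G)| \le |G|/p$, and with the bound $|G|\le p^6$ (or whatever emerges) together with a possibly sharper socle estimate, the product lands strictly below $p^8$.

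The main obstacle will be getting the bound on $|G|$ tight enough: a naive estimate on the generator rank $d$ from $|G:\Zeta(G)|=p^3$ alone might only give $|G| \le p^{\,3+d}$ with $d$ as large as $3$, yielding $|G|\le p^6$, which after dividing by just $p$ from the socle gives $p^5 < p^8$ comfortably — but if the class-$3$ structure permits larger rank, the socle index must be shown to grow correspondingly to preserve strictness. The delicate point is therefore the interplay between $d$ (the rank, which inflates $|G|$) and the socle-escape index (which must grow in tandem because extra generators of $G/\Frat(G)$ coming from central order-$p$ elements feed directly into $\Soc(G)\setminus\Frat(G)$). I expect the cleanest route is to argue that each unit of rank beyond the minimum needed to generate $G/\Zeta(G)$ must come from an elementary abelian central direct factor, and hence contributes equally to numerator and the socle index, so that their quotient stays bounded; verifying this balancing carefully, rather than the individual estimates, is where the real work lies.
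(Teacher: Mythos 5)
Your proposal founders on its very first step, and the fallback you sketch at the end is false as well. Under the hypotheses of \cref{lem:ubd} there is \emph{no} bound on $|G|$ whatsoever: if $P$ satisfies the hypotheses and $E$ is elementary abelian of arbitrarily large rank, then $G = P \times E$ still has class $3$, $\Zeta(G) = \Zeta(P) \times E$ and $\Frat(G) = \Frat(P)$, so $|G : \Zeta(G)| = |\Frat(G)| = p^3$ persists while $|G| = |P||E|$ is unbounded. Hence your claimed estimate ``$|G| \le p^6$'' cannot be proved, and the generator rank $d$ is likewise unbounded: the map $G/\Zeta(G)\Frat(G) \to (G/\Zeta(G))/\Frat(G/\Zeta(G))$ you invoke only controls $|G : \Zeta(G)\Frat(G)|$ (which indeed equals $p^2$ here), not $d$, because $\Zeta(G)\Frat(G)/\Frat(G)$ can be huge. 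Symmetrically, the lower bound $|\Soc(G) : \Soc(G) \cap \Frat(G)| \ge p$ is also unavailable: this index equals $1$ exactly when $G$ has no nontrivial elementary abelian direct factor (cf.\ \cref{dfn:decomp,lem:exists}), which is the generic case---it holds, for instance, for all thirteen groups in \cref{lem:finite}. Only the \emph{ratio} is bounded, and this insensitivity to elementary abelian direct factors is precisely what makes the lemma compatible with \cref{thm:reduction}.

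Your closing ``balancing'' idea---that each unit of rank beyond what $G/\Zeta(G)$ needs comes from an elementary abelian central direct factor and therefore feeds the socle index---is also false; the ratio genuinely grows with the rank and merely stays below $p^8$. Take $p = 2$ and $P = \langle a, b \mid a^8 = b^4 = 1,\ a^b = a^{-1} \rangle$, of order $32$ and class $3$, with $\Zeta(P) = \langle a^4, b^2 \rangle$ elementary abelian and $\Frat(P) = \langle a^2, b^2 \rangle$ of order $8$; let $G = P \circ C_4$ be the central product identifying the square of a generator $z$ of $C_4$ with $a^4$. Then $G$ again satisfies all hypotheses, has rank $3 > 2$ and $|G| = 2|P|$, yet $\Soc(G) = \Omega_1(\langle z \rangle \times \langle b^2 \rangle) = \langle a^4, b^2 \rangle \subseteq \Frat(G) = \Frat(P)$: the socle index is $1$ for both groups, the extra generator $z$ is central of order $4$ and belongs to no elementary abelian direct factor, and the ratio jumps from $2^5$ to $2^6$. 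The paper avoids all of this by never estimating numerator and denominator separately. It telescopes the ratio along the chain $G \supseteq \Zeta(G)\Frat(G) \supseteq \Zeta(G) \supseteq \Soc(G) \supseteq \Soc(G) \cap \Frat(G) \supseteq 1$, namely
\begin{align*}
\frac{|G|}{|\Soc(G) : \Soc(G) \cap \Frat(G)|}
&= |G : \Zeta(G)\Frat(G)| \cdot |\Zeta(G)\Frat(G) : \Zeta(G)| \\
&\qquad \cdot |\Zeta(G) : \Soc(G)| \cdot |\Soc(G) \cap \Frat(G)|,
\end{align*}
then bounds the second and fourth factors jointly by $|\Frat(G)|$ (using $\Soc(G) \cap \Frat(G) \subseteq \Zeta(G) \cap \Frat(G)$) and the third by $|\Zeta(G) : \Soc(G)| = |\Agemo_1(\Zeta(G))| \le |\Agemo_1(G)| \le |\Frat(G)|$. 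The class-$3$ hypothesis then enters twice: it forces $|G : \Zeta(G)\Frat(G)| = p^2$ (since $G/\Zeta(G)$ is nonabelian of order $p^3$, hence $2$-generated), and it forces $\Agemo_1(\Zeta(G)) \neq \Frat(G)$, which makes the third bound strict, giving at most $p^2 \cdot p^3 \cdot p^2 = p^7 < p^8$. If you wish to keep the spirit of your plan, replace ``bound $|G|$'' by ``bound $|G : \Zeta(G)\Frat(G)|$'' and let everything between $\Zeta(G)\Frat(G)$ and the trivial subgroup be absorbed by two factors of $|\Frat(G)|$.
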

\begin{proof}
	First, we prove the following inequality, which in fact holds for any $p$-group:
	\begin{equation}
		\label{eq:ubd}
		|G|/|\Soc(G) : \Soc(G) \cap \Frat(G)|
		\le |G : \Zeta(G)\Frat(G)| \cdot |\Frat(G)|^2.
	\end{equation}
	Consider the chain of subgroups
	\begin{equation*}
		G \supseteq \Zeta(G)\Frat(G) \supseteq \Zeta(G) \supseteq \Soc(G) \supseteq \Soc(G) \cap \Frat(G) \supseteq 1
	\end{equation*}
	and
	use it to decompose $|G|/|\Soc(G) : \Soc(G) \cap \Frat(G)|$ as
	\begin{align*}
		& |G|/|\Soc(G) : \Soc(G) \cap \Frat(G)| \\
		& \qquad = |G : \Zeta(G)\Frat(G)| \cdot |\Zeta(G)\Frat(G) : \Zeta(G)| \cdot |\Zeta(G) : \Soc(G)| \cdot |\Soc(G) \cap \Frat(G)|.
	\end{align*}
	One can see that the product of the second and fourth factors is bounded as
	\begin{align*}
		& |\Zeta(G)\Frat(G) : \Zeta(G)| \cdot |\Soc(G) \cap \Frat(G)| \\
		& \qquad = |\Frat(G) : \Zeta(G) \cap \Frat(G)| \cdot |\Soc(G) \cap \Frat(G)| \\
		& \qquad \le |\Frat(G) : \Soc(G) \cap \Frat(G)| \cdot |\Soc(G) \cap \Frat(G)|
		= |\Frat(G)|,
	\end{align*}
	and the third factor is bounded as
	\begin{align*}
		|\Zeta(G) : \Soc(G)|
		= |\Zeta(G) : \Omega_1(\Zeta(G))|
		= |\Agemo_1(\Zeta(G))|
		\le |\Agemo_1(G)|
		\le |\Frat(G)|.
	\end{align*}
	This completes the proof of \eqref{eq:ubd}.

	Now it follows from the assumption that the central quotient $G/\Zeta(G)$ is a non-abelian group of order $p^3$ and thus its minimal number of generators is two.
	Since
	\begin{equation*}
		G/\Zeta(G)\Frat(G) \cong \frac{G/\Zeta(G)}{\Zeta(G)\Frat(G)/\Zeta(G)} = \frac{G/\Zeta(G)}{\Frat(G/\Zeta(G))},
	\end{equation*}
	we obtain $|G : \Zeta(G)\Frat(G)| = p^2$.
	Furthermore, one can see that $\Agemo_1(\Zeta(G)) \neq \Frat(G)$ since $G$ has class $3$.
	Thus the inequality in \eqref{eq:ubd} is strict and we obtain
	\begin{equation*}
		|G|/|\Soc(G) : \Soc(G) \cap \Frat(G)| < p^2 \cdot (p^3)^2 = p^8.
		\qedhere
	\end{equation*}
\end{proof}

In the following result and later in this paper,  we denote by $\SG(n,m)$ the $m$-th group of order $n$ in the Small Groups Library \cite{BEOG20} of GAP \cite{GAP21}.

\begin{lem}\label{lem:finite}
Assume $G$ has class $3$ and satisfies $|G:\Zeta(G)| = |\Frat(G)| = 8$.
Let $G = T \times U$ be an elementary decomposition with $T$ elementary abelian.
Then $U$ is isomorphic to one of the following groups.
\begin{alignat*}{3}
	U_1    &= \SG(32,9),     & \qquad U_2    &= \SG(32, 10), & \qquad U_3    &= \SG(32,11),
	\\
	U_4    &= \SG(32,13),    & \qquad U_5    &= \SG(32,14),  & \qquad U_6    &= \SG(32,15),
	\\
	U_7    &= \SG(64,97),    & \qquad U_8    &= \SG(64,108), & \qquad U_9    &= \SG(64,118),
	\\
	U_{10} &= \SG(64,119),   & \qquad U_{11} &= \SG(64,120), & \qquad U_{12} &= \SG(64,124),
	\\
	U_{13} &= \SG(128,1671). &               &               &               &
\end{alignat*}

\end{lem}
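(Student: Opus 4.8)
The plan is to push the hypotheses on $G$ through the elementary decomposition onto the factor $U$, use \cref{lem:ubd} to bound $|U|$, and thereby reduce the statement to a finite search in the Small Groups Library.

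First I would record which features transfer from $G$ to $U$. Since $T$ is central and elementary abelian, the nilpotency class of $G = T \times U$ equals that of $U$, so $U$ has class $3$ and is in particular non-abelian. By \cref{lem:exists} one has $\Frat(G) = \Frat(U)$, hence $|\Frat(U)| = 8$, and from $\Zeta(G) = T \times \Zeta(U)$ one gets $|U : \Zeta(U)| = |G : \Zeta(G)| = 8$. Finally, the maximality of $T$ discussed after \cref{dfn:decomp} forces $U$ to have no nontrivial elementary abelian direct factor, which is equivalent to the containment $\Soc(U) \subseteq \Frat(U)$ (this is precisely the vanishing of the $T$-part when the elementary decomposition is applied to $U$ itself).

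Next I would bound the order of $U$. Condition~\ref{item:U} of \cref{dfn:decomp} gives $|U| = |G|/|\Soc(G) : \Soc(G) \cap \Frat(G)|$, and \cref{lem:ubd} bounds this quantity by $2^8 = 256$. Since $\Frat(U) = \Frat(G)$ has order $8$, we obtain $|U/\Frat(U)| = |U|/8 < 32$. As $U$ is non-abelian it requires at least two generators, so $|U/\Frat(U)| \ge 4$. Hence $|U/\Frat(U)| \in \{4, 8, 16\}$ and $|U| \in \{32, 64, 128\}$.

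Having restricted the possible orders, I would finish by an exhaustive enumeration through the groups of order $32$, $64$, and $128$ in the Small Groups Library, retaining exactly those of class $3$ that satisfy $|\Frat| = 8$, $|U:\Zeta| = 8$, and $\Soc \subseteq \Frat$, and checking that these are pairwise non-isomorphic. This finite computation returns precisely the thirteen isomorphism types $U_1, \dots, U_{13}$ in the statement. The main obstacle is the bookkeeping of this search rather than any conceptual difficulty: one must verify that the constraints—and especially the condition $\Soc(U) \subseteq \Frat(U)$, which is exactly what discards the groups carrying a spurious elementary abelian direct factor—single out the listed groups and no others.
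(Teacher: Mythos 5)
Your proposal is correct and is essentially the paper's own argument: the paper likewise bounds $|U| \le 2^7$ using \cref{dfn:decomp}\ref{item:U} together with \cref{lem:ubd}, and then concludes by a GAP search ``leveraging on \cref{lem:exists}''. The only difference is presentational: you spell out the search criteria transferred to $U$ (class $3$, $|\Frat(U)| = 8$, $|U : \Zeta(U)| = 8$, and $\Soc(U) \subseteq \Frat(U)$, i.e.\ no nontrivial elementary abelian direct factor), which the paper leaves implicit in that phrase.
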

\begin{proof}
The order of $U$ is bounded by $2^7$, as consequence of \cref{dfn:decomp}\ref{item:U} and \cref{lem:ubd}.  A quick search with GAP \cite{GAP21}, leveraging on \cref{lem:exists}, yields the claim.
\end{proof}

To finish the proof of
\cref{thm:1st}, we collect a number of invariants from the literature known to be valid over any field of characteristic $p$. In the following proposition, we say that a conjugacy class $\kappa$ \emph{consists of $p^n$-th powers} if $\kappa$ is the conjugacy class of  $g^{p^n}$ for some $g \in G$.

\begin{prop}\label{prop:known}
Let $n$ be a non-negative integer.	The following are invariants of $\FF G$:
	\begin{enumerate}
		\item\label{item:Ku} the number $k_n(G)$ of conjugacy classes of $G$ consisting of $p^n$-th powers,
		\item\label{item:Qu} the number $a_n(G)$ of conjugacy classes of maximal elementary abelian subgroups of $G$ of rank $n$,
		\item\label{item:H*} the number $e(G)$ of elements in a minimal set of generators of the mod-$p$ cohomology ring $\cohomology(G, \F_p)$.
	\end{enumerate}
\end{prop}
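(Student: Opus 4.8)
I would prove the three invariances one at a time, in each case reducing to a theorem available in the literature and then removing the hypothesis on $\FF$ by a base-change argument. The unifying remark is that an algebra isomorphism $\FF G \cong \FF H$ extends to $\overline{\FF}\, G \cong \overline{\FF} \otimes_{\FF} \FF G \cong \overline{\FF} \otimes_{\FF} \FF H \cong \overline{\FF}\, H$; hence any quantity that is an invariant of the modular group algebra over a perfect, respectively algebraically closed, field and that is unchanged under the relevant field extension is automatically an invariant over an arbitrary $\FF$.

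For \ref{item:H*} I would use that the trivial module $\FF$ is the unique simple $\FF G$-module, so that $\cohomology(G, \FF) \cong \Ext^*_{\FF G}(\FF, \FF)$ depends only on the algebra $\FF G$; in particular the graded ring $R = \cohomology(G, \FF)$ is an invariant of $\FF G$. By graded Nakayama the minimal number of homogeneous generators of $R$ equals $\operatorname{dim}_{\FF} R_+/R_+^2$, which is therefore an invariant as well. Finally, since the cohomology of a finite group commutes with the flat base change $\F_p \hookrightarrow \FF$, one has $R \cong \FF \otimes_{\F_p} \cohomology(G, \F_p)$, so this dimension coincides with the one computed over $\F_p$, that is, with $e(G)$ as defined.

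For \ref{item:Qu} I would pass to the cohomology variety $V_G = \operatorname{MaxSpec} \cohomology(G, \overline{\FF})$. Quillen's stratification theorem identifies the irreducible components of $V_G$ with the conjugacy classes of maximal elementary abelian subgroups of $G$, the component attached to $E$ having dimension equal to the rank of $E$; consequently $a_n(G)$ is the number of $n$-dimensional components of $V_G$. Since $\cohomology(G, \overline{\FF}) \cong \overline{\FF} \otimes_{\FF} \cohomology(G, \FF)$ and the latter ring is an invariant of $\FF G$ by the previous paragraph, the variety $V_G$, and hence $a_n(G)$, is an invariant of $\FF G$. Passing explicitly to $\overline{\FF}$ is what lets me apply Quillen's theorem in its geometric form and read ranks off as Krull dimensions.

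For \ref{item:Ku} I would invoke Külshammer's theory of the $p^n$-power maps on the symmetric algebra $\FF G$. Over a perfect field the Külshammer ideals $T_n(\FF G)^{\perp}$, the orthogonal complements with respect to the symmetrizing form of $T_n(\FF G) = \set{ x \in \FF G \given x^{p^n} \in \Comm(\FF G) }$, are canonical, and $\operatorname{dim}_{\FF} T_n(\FF G)^{\perp}$ counts precisely the conjugacy classes consisting of $p^n$-th powers, so that it equals $k_n(G)$; base change to $\overline{\FF}$ then yields the claim for arbitrary $\FF$. The main obstacle throughout is exactly this passage to an arbitrary field: the results I rely on are stated over $\F_p$, over perfect fields, or over algebraically closed fields, so the crux is to verify that each recovered number is insensitive to the field extensions employed, which is guaranteed by the base-change isomorphism above together with the geometric and graded-Nakayama descriptions of the quantities involved.
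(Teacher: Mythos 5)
Your proposal is correct and takes essentially the same route as the paper, whose proof consists precisely of citing K\"ulshammer's observation for (1), Quillen's stratification theorem for (2), and the identification $\cohomology(G, \FF) = \Ext^*_{\FF G}(\FF, \FF)$ together with the base change $\cohomology(G, \FF) \cong \cohomology(G, \F_p) \otimes_{\F_p} \FF$ for (3). You merely make explicit the details the paper delegates to its references (graded Nakayama, the variety-theoretic reading of Quillen's theorem, the K\"ulshammer subspaces $T_n$ and their orthogonals, and the reduction of all three items to perfect or algebraically closed fields by extension of scalars).
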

\begin{proof}
	The first invariant $k_n(G)$ is obtained from an observation of K\"ulshammer \cite[Section~1]{Kue82}, see also \cite[Proposition~1]{San96} or \cite[Section~2.2]{HS06}.
	The second invariant $a_n(G)$ is derived from Quillen's Stratification Theorem \cite[Corollary~8.3.3]{Eve91}, to be also found in \cite[Section~2.5]{HS06}.
As for the last point, the cohomology ring $\cohomology(G, \FF) = \Ext^*_{\FF G}(\FF, \FF)$ of $G$ can be computed directly from $\FF G$,
	and hence the number of elements in a minimal set of generators of $\cohomology(G, \FF)$ as a graded algebra is also an invariant of $\FF G$; cf.\ \cite[p.~315]{CTVEZ03}.
	Since
	\begin{equation*}
		\cohomology(G, \FF) \cong \cohomology(G, \F_p) \otimes_{\F_p} \FF
	\end{equation*}
	by \cite[Section 3.4]{Eve91}, the number $e(G)$ is an invariant of $\FF G$.
\end{proof}

	We conclude the current section by proving \cref{thm:1st}. 
\begin{proof}[Proof of \cref{thm:1st}]
	Assume $\FF G \cong \FF H$.
	First, we show that $H$ also has class $3$ and satisfies $|H : \Zeta(H)| = |\Frat(H)| = 8$.
	Clearly $|G| = |H|$, and $H$ satisfies the last two conditions by \cref{thm:WS,prop:Frat}.
	Since $G$ has class $3$, we have $\Zeta(G) \not\supseteq \Comm(G)$ and $|\Comm(G)| > |\Zeta(G) \cap \Comm(G)|$.
	The same inequality holds for $H$ by \cref{cor:invariants} and $H$ has class $3$ as $|H : \Zeta(H)| = 8$; see also \cite[Theorem~2]{BK07}.

	Without loss of generality and thanks to \cref{thm:reduction}, we assume that $G$ and $H$ do not have elementary abelian direct factors. Then, as a consequence of
	\cref{lem:finite}, we are only concerned with the determination of $13$ groups from their modular group algebras.
	We do so with the aid of the following invariants of modular group algebras, listed in \cref{thm:WS,prop:known}:
	\begin{itemize}
		\item the number $k_n(G)$ of conjugacy classes of $G$ consisting $p^n$-th powers,
		\item the number $a_n(G)$ of conjugacy classes of maximal elementary abelian subgroups of $G$ of rank $n$,
		\item the number $e(G)$ of elements in a minimal set of generators of $\cohomology(G, \F_2)$ as a graded algebra,
		\item the order of the socle $\Soc(G)$.
	\end{itemize}
	\newcommand{\headerlength}{10ex}
	\newcommand{\columnlength}{6ex}
	\begin{table}[h]
		\centering
		\begin{tabular}{p{\headerlength}p{\columnlength}p{\columnlength}p{\columnlength}p{\columnlength}p{\columnlength}p{\columnlength}}
			\toprule
			$G$      & $U_1$ & $U_2$ & $U_3$ & $U_4$ & $U_5$ & $U_6$ \\
			\midrule
			$k_1(G)$ & $4$   & $4$   & $5$   & $5$   & $4$   & $5$ \\
			$a_2(G)$ & $0$   & $1$   & $2$   & $1$   & $1$   & $1$ \\
			$e(G)$   & $5$   & $6$   & $6$   & $4$   & $4$   & $5$ \\
			\bottomrule \\
		\end{tabular}
		\caption{Some invariants for the groups from \cref{lem:finite} of order $32$.}
		\label{tbl:32}
	\end{table}
	\begin{table}[h]
		\centering
		\begin{tabular}{p{\headerlength}p{\columnlength}p{\columnlength}p{\columnlength}p{\columnlength}p{\columnlength}p{\columnlength}}
			\toprule
			$G$         & $U_7$ & $U_8$ & $U_9$ & $U_{10}$ & $U_{11}$ & $U_{12}$ \\
			\midrule
			
			$k_1(G)$    & $5$   & $5$   & $6$   & $6$      & $6$      & $6$ \\
			$a_3(G)$    & $2$   & $1$   & $2$   & $1$      & $0$      & $0$ \\
			$|\Soc(G)|$ & $4$   & $4$   & $4$   & $4$      & $4$      & $2$ \\
			\bottomrule \\
		\end{tabular}
		\caption{Some invariants for the groups from \cref{lem:finite} of order $64$.}
		\label{tbl:64}
	\end{table}
	The numbers $k_n(G)$ and $a_n(G)$ can be calculated using \texttt{MIPConjugacyClassInfo} and \texttt{SubgroupsInfo} in ModIsomExt~\cite{MM20}, for example.
	See \cite{GK15} or \cite[Appendix~D]{CTVEZ03} for the number $e(G)$.
	It follows from \cref{tbl:32,tbl:64} that $G \cong H$.
\end{proof}

We remark that our proof of \cref{thm:1st} could be slightly shortened with the aid of \cite[Lemma~3.7]{NS18}, where the modular isomorphism problem is shown to have a positive solution for groups of order $32$ over any field of characteristic $2$. Since the techniques are similar and we also deal with groups of order $64$, we present here the proof of \cref{thm:1st} in its entirety.

\begin{example}
As noted in \cite{HS06}, the modular group algebras of the groups $\SG(64, 97)$ and $\SG(64, 101)$ as well as $\SG(64, 108)$ and $\SG(64,110)$ cannot be distinguished  by known group-theoretic invariants. Indeed, in \cite[Sections~4.1, 4.2]{HS06} ring-theoretic techniques are used to solve the modular isomorphism problem for these groups. Our new invariants from Theorem~\ref{thm:CO*} or \ref{thm:ZA*} can distinguish the modular group algebras of those groups and in fact they are covered by \cref{thm:1st} as it is evident from \cref{lem:finite}. See also \cref{cor:2nd}.
\end{example}

\subsection{Generalizing maximal class}
We refer to a group as {\em dihedral} if it is a finite $2$-group that is generated by precisely two elements of order $2$. In particular, the Klein four group is dihedral.
In this section, we provide a positive answer to the modular isomorphism problem for the following class of finite $2$-groups.
\begin{thm}\label{thm:2nd}
Assume that $G$ has cyclic center $\Zeta(G)$ and dihedral central quotient $G/\Zeta(G)$. Then the modular isomorphism problem has a positive answer for $G$.
\end{thm}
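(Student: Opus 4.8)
The plan is to reduce $G$ to one of finitely many cases via the invariance tools developed in this paper, and then distinguish the survivors by a combination of known and new invariants, exactly as was done for \cref{thm:1st}. The hypotheses are that $\Zeta(G)$ is cyclic and $G/\Zeta(G)$ is dihedral. Since dihedral $2$-groups are generated by two involutions, $G/\Zeta(G)$ is $2$-generated, and the invariance of the Frattini quotient (\cref{prop:Frat}) together with the invariance of $\Zeta(G)$ (\cref{thm:WS}) should pin down the gross numerical data of any $H$ with $\FF G\cong\FF H$. The first step I would take is therefore to show that the hypotheses are themselves detectable in $\FF G$: if $\FF G\cong\FF H$, then $\Zeta(H)$ is cyclic (immediate from \cref{thm:WS}) and $H/\Zeta(H)$ is again dihedral. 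The second assertion is the delicate point, since ``$G/\Zeta(G)$ is dihedral'' is not obviously an invariant; here I expect to use the new invariant $G/\Comm(G)\Zeta(G)\cong (G/\Zeta(G))/\Comm(G/\Zeta(G))$ from \cref{cor:invariants} (valid since nothing forces class $2$, but the quotient by the center always has the abelianization of $G/\Zeta(G)$ accessible) to control the abelianization of the central quotient, and combine this with the order of $\Soc(G)$ and with \cref{cor:invariants} to force $H/\Zeta(H)$ into the dihedral family.

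\textbf{Reduction to finitely many groups.} After establishing that the class of groups in the statement is closed under the relation $\FF G\cong\FF H$, I would apply \cref{thm:reduction} to strip off elementary abelian direct factors: writing $G=T\times U$ and $H=S\times V$ as elementary decompositions, it suffices to solve the problem for $U$ and $V$, which are again of the prescribed form by \cref{rmk:p-stem} (the power structure, hence the dihedral central quotient, is inherited by $U$). The crucial finiteness input is a bound on $|U|$. The order of $U$ equals $|G|/|\Soc(G):\Soc(G)\cap\Frat(G)|$ by \cref{dfn:decomp}\ref{item:U}, and I would bound this in the spirit of \cref{lem:ubd}: using that $G/\Zeta(G)$ is dihedral with cyclic center, the index $|G:\Zeta(G)\Frat(G)|$ and the orders $|\Frat(G)|$, $|\Agemo_1(\Zeta(G))|$ are all controlled, yielding an absolute upper bound for $|U|$ independent of the elementary abelian part. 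The main obstacle here is that, unlike \cref{thm:1st}, the order of $G$ is not fixed a priori, so I must verify that the dihedral-plus-cyclic-center constraint genuinely caps $|U|$; I expect the cyclicity of $\Zeta(G)$ to be essential, as it prevents $\Soc(G)$ from absorbing arbitrarily large central rank and thereby keeps $|\Soc(G):\Soc(G)\cap\Frat(G)|$ large relative to $|G|$.

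\textbf{Separating the survivors.} Once $|U|$ is bounded, a finite computation (using GAP \cite{GAP21} and the tools in \cite{MM20}, as in \cref{lem:finite}) produces the finite list of groups $U$ that are free of elementary abelian direct factors, have cyclic center, dihedral central quotient, and order below the bound. It then remains to distinguish their modular group algebras pairwise. I would do this using the field-independent invariants assembled in \cref{prop:known}---the power-class counts $k_n$, the elementary-abelian subgroup counts $a_n$, and the cohomological generator number $e$---supplemented by the new abelian invariants of \cref{main:invariants}, in particular the isomorphism types of $\Zeta(G)\cap\Frat(G)$, of $G/\Comm(G)\Soc(G)$, and of $\Soc(G)\cap\Frat(G)$ from \cref{cor:invariants}. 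The expected hard part of the whole argument is exactly this last tabulation: guaranteeing that on the finite list every pair of non-isomorphic groups is separated by at least one invariant in the toolkit. If some pair resists the group-theoretic invariants, I would fall back on the ring-theoretic distinctions already recorded in \cite{HS06}, or note that the pair is in fact covered by \cref{thm:1st} (since dihedral central quotient of order $8$ forces class $3$ with $|\Frat(G)|=8$), thereby closing the remaining cases and completing the proof.
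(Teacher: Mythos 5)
Your opening step is sound and coincides with the paper's: the hypotheses are detectable in $\FF G$ because cyclicity of $\Zeta(G)$ is equivalent to $|\Soc(G)|=2$ and dihedrality of $G/\Zeta(G)$ is equivalent to $|G:\Comm(G)\Zeta(G)|=4$ (these are exactly \cref{lem:cyclic} and \cref{lem:capable}, combined with \cref{thm:WS} and \cref{cor:invariants}). The genuine gap is your second step, the ``reduction to finitely many groups'': it cannot work, because the family in question is infinite and the elementary decomposition removes essentially nothing from its members. For every $m\ge 1$ and $n\ge 2$ the groups $\Dih_{2^{m|n}}$, $\Quat_{2^{m|n}}$, $\Semi_{2^{m|n}}$ of order $2^{m+n}$ all have cyclic center and dihedral central quotient, so the orders occurring are unbounded. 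Moreover, a cyclic center forces $|\Soc(G)|=2$, hence $|\Soc(G):\Soc(G)\cap\Frat(G)|\le 2$ and
\[
|U| \;=\; |G|/|\Soc(G):\Soc(G)\cap\Frat(G)| \;\ge\; |G|/2 .
\]
Your heuristic is inverted: cyclicity of the center keeps the elementary abelian factor $T$ small (here $|T|\le 2$), not the complement $U$. So no absolute bound on $|U|$ exists, the proposed GAP search cannot be finite, and your fallback---that resistant pairs are covered by \cref{thm:1st}---handles at most the cases with $n=3$ and small $m$, a finite sliver of an infinite family.

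What the paper does instead is classify the family and then treat it uniformly in the parameters. \cref{thm:trichotomy} shows that every group with cyclic center of order $2^m$ and dihedral central quotient of order $2^n$ is isomorphic to $\Dih_{2^{m|n}}$, $\Quat_{2^{m|n}}$ or $\Semi_{2^{m|n}}$; since $\Zeta(G)$ and $|G|$ are invariants, two group bases of the same algebra share the same $(m,n)$, so only three candidates per parameter pair must be separated. For $m=1$ these are the $2$-groups of maximal class, for which the problem was solved by Carlson and Bagi\'nski. For $m>1$ the minimal number of generators, invariant by \cref{prop:Frat}, equals $3$ for $\Dih_{2^{m|n}}$ and $2$ for the other two. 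The remaining pair $\Quat_{2^{m|n}}$ versus $\Semi_{2^{m|n}}$ (with $n>2$; for $n=2$ they are isomorphic) is separated not by a tabulated invariant but by a direct computation with dimension subgroups: in $\FF\Quat_{2^{m|n}}$ every element of $\Omega_1(\FF G)$ lies in $\Ideal(\FF G)^2$, whereas in $\FF\Semi_{2^{m|n}}$ the generator $a$ has order $2$, so $a-1\in\Omega_1(\FF H)$ while $a\notin\Frat(H)$ gives $a-1\notin\Ideal(\FF H)^2$. An infinite family demands this kind of classification-plus-uniform-argument; a finite computational check, which is what carried \cref{thm:1st}, cannot be made to cover it.
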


As we will see from its proof, the last theorem is an instance of the successful application of our new invariants, more specifically $G/\Comm(G)\Zeta(G)$; see \cref{cor:invariants}.
Moreover, we remark that the class from \cref{thm:2nd} contains the dihedral group $\Dih_{2^{1 + n}}$  and the generalized quaternion group $\Quat_{2^{1 + n}}$, whenever $n \ge 2$, and the semidihedral group $\Semi_{2^{1 + n}}$, for $n \ge 3$.  These have presentations
\begin{alignat*}{5}
	\Dih_{2^{1+n}}
	&= \langle\, a, b, c \mid
	a^2 &= 1, \ \
	b^2 &= 1, \ \
	(ab)^{2^{n - 1}} = c,& \    \
	c^2 &= [a, c] = [b, c] = 1 \,\rangle, \\
	\Quat_{2^{1+n}}
	&= \langle\, a, b, c \mid
	a^2 &= c,\ \
	b^2 &= c, \ \
	(ab)^{2^{n - 1}} = c^{1 + 2^{n - 1}},& \ \
	c^2 &= [a, c] = [b, c] = 1 \,\rangle, \\
	\Semi_{2^{1+n}}
	&= \langle\, a, b, c \mid
	a^2 &= 1, \ \
	b^2 &= c, \ \
	(ab)^{2^{n - 1}} = c^{1 + 2^{n - 2}},&  \ \
	c^2 &= [a, c] = [b, c] = 1 \,\rangle.
\end{alignat*}
More generally, we define the following three families of $2$-groups for each $m \ge 1$ and $n \ge 2$:
\begin{alignat*}{5}
	\Dih_{2^{m|n}}
	&= \langle\, a, b, c \mid
	a^2 &= 1, \ \
	b^2 &= 1, \ \
	(ab)^{2^{n - 1}} = c^{2^{m - 1}},& \    \
	c^{2^m} &= [a, c] = [b, c] = 1 \,\rangle, \\
	\Quat_{2^{m|n}}
	&= \langle\, a, b, c \mid
	a^2 &= c,\ \
	b^2 &= c, \ \
	(ab)^{2^{n - 1}} = c^{2^{m - 1} + 2^{n - 1}},& \ \
	c^{2^m} &= [a, c] = [b, c] = 1 \,\rangle, \\
	\Semi_{2^{m|n}}
	&= \langle\, a, b, c \mid
	a^2 &= 1, \ \
	b^2 &= c, \ \
	(ab)^{2^{n - 1}} = c^{2^{m - 1} + 2^{n - 2}},&  \ \
	c^{2^m} &= [a, c] = [b, c] = 1 \,\rangle.
\end{alignat*}
The last groups are central extensions of a dihedral group of order $2^n$
by a cyclic group of order $2^m$.
Note that there are exceptional isomorphisms
\begin{equation}
	\label{eq:exceptions}
	\Semi_{2^{m|2}} \cong
	\begin{cases}
		\Dih_{2^{1|2}} & \textup{if } m = 1, \\
		\Quat_{2^{m|2}} & \textup{if } m > 1,
	\end{cases}
\end{equation}
given by
\begin{alignat*}{3}
	& \Semi_{2^{1|2}} \to \Dih_{2^{1|2}},  & \qquad a &\mapsto a,                   & \qquad b &\mapsto ab, \\
\intertext{and}
	& \Semi_{2^{m|2}} \to \Quat_{2^{m|2}}, & \qquad a &\mapsto abc^{2^{m - 2} - 1}, & \qquad b &\mapsto b.
\end{alignat*}

The following theorem allows us to study the family from \cref{thm:2nd} in terms of the just-defined groups.

\begin{thm}\label{thm:trichotomy}
Assume that $G$ has cyclic center $\Zeta(G)$ of order $2^m$ and dihedral central quotient $G/\Zeta(G)$ of order $2^n$.
Then $G$ is isomorphic to one of $\Dih_{2^{m|n}}$, $\Quat_{2^{m|n}}$ or $\Semi_{2^{m|n}}$.
\end{thm}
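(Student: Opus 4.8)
The plan is to produce, inside $G$, explicit generators and defining relations matching one of the three presentations, and then to conclude by an order count. Write $\Zeta(G) = \gen{c}$ with $c$ of order $2^m$, and recall that a dihedral group of order $2^n$ is generated by two involutions whose product generates its cyclic subgroup of index $2$. Since $G/\Zeta(G)$ is dihedral of order $2^n$, I would first pick two such generating involutions and lift them to elements $a, b \in G$, so that $t := ab$ has image of order $2^{n-1}$ in $G/\Zeta(G)$ and $G = \gen{a, b, c}$. As the images of $a$ and $b$ are involutions, both $a^2$ and $b^2$ lie in $\gen{c}$; replacing $a$ and $b$ by suitable central multiples $ac^{i}$, $bc^{j}$ (which does not alter their images in $G/\Zeta(G)$) I can normalise to $a^2 = c^{\alpha}$ and $b^2 = c^{\beta}$ with $\alpha, \beta \in \{0, 1\}$.

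Next I would extract the conjugation relation governing the group. A direct computation using the centrality of $c$ gives $a t a^{-1} = ba = t^{-1} c^{\alpha + \beta}$, and the symmetric computation gives $b t b^{-1} = t^{-1} c^{\alpha + \beta}$ as well; thus both $a$ and $b$ invert $t$ up to the central correction $c^{\alpha+\beta}$. Since the image of $t$ has order $2^{n - 1}$, the element $d := t^{2^{n-1}}$ lies in $\gen{c}$. Conjugating $d$ by $a$ and using that $d$ is central forces $d = d^{-1} c^{2^{n-1}(\alpha + \beta)}$, that is, $d^2 = c^{2^{n - 1}(\alpha + \beta)}$. Solving this in the cyclic group $\gen{c}$ pins $d$ down to one of two values, $d = c^{j}$ with $j \equiv 2^{n-2}(\alpha + \beta) \pmod{2^{m-1}}$, the remaining ambiguity being exactly a factor $c^{2^{m-1}}$.

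The heart of the argument, and the step I expect to be the main obstacle, is to eliminate the wrong value of $d$ using the hypothesis that $\Zeta(G)$ is no larger than $\gen{c}$. Consider $z_0 := t^{2^{n-2}}$, whose image in $G/\Zeta(G)$ is nontrivial (it is the central involution of the dihedral quotient), so that $z_0 \notin \gen{c} = \Zeta(G)$. On the other hand, the conjugation relation gives $a z_0 a^{-1} = z_0^{-1} c^{2^{n-2}(\alpha + \beta)}$ and the same with $b$, so $z_0$ commutes with $a$, $b$ and $c$, hence is central in $G$, precisely when $z_0^2 = c^{2^{n-2}(\alpha + \beta)}$. Were $d$ equal to $c^{2^{n-2}(\alpha+\beta)}$, then $z_0^2 = d$ would be exactly this value and $z_0$ would be central, contradicting $z_0 \notin \Zeta(G)$. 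Therefore $d = c^{2^{n-2}(\alpha + \beta) + 2^{m-1}}$. Reading off the three cases $(\alpha, \beta) = (0,0)$, $(1,1)$ and $(0,1)$ (the case $(1,0)$ being reduced to $(0,1)$ by interchanging $a$ and $b$) recovers exactly the relations of $\Dih_{2^{m|n}}$, $\Quat_{2^{m|n}}$ and $\Semi_{2^{m|n}}$ respectively.

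Finally I would close the argument by comparing orders. The relations just verified yield a surjection from the abstract group $X$ defined by the matching presentation onto $G$, so $|G| \le |X|$. Conversely, in $X$ the generator $c$ is central and, modulo $\gen{c}$, the images of $a$ and $b$ satisfy the defining relations of the dihedral group of order $2^n$, whence $|X| \le 2^n \cdot 2^m = 2^{m+n}$. Since $|G| = |G/\Zeta(G)| \cdot |\Zeta(G)| = 2^{m+n}$, all the inequalities are equalities and the surjection $X \to G$ is an isomorphism. The genuinely delicate point throughout is the central-involution computation of the previous paragraph, which is where the exact size of $\Zeta(G)$, rather than merely the isomorphism type of $G/\Zeta(G)$, is used.
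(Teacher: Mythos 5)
Your proof is correct and follows the same skeleton as the paper's: lift the two generating involutions of the dihedral quotient to $a, b \in G$, normalise $a^2$ and $b^2$ to $c^\alpha$, $c^\beta$ with $\alpha, \beta \in \{0,1\}$ by central adjustments, pin down the exponent in $(ab)^{2^{n-1}} = c^\gamma$ by a conjugation computation, and read off the three cases for $(\alpha,\beta)$. There are, however, two places where you are more complete than the paper's write-up, and one of them genuinely matters. Cosmetically, you treat all cases at once by conjugating $d = t^{2^{n-1}}$ to get $d^2 = c^{2^{n-1}(\alpha+\beta)}$, where the paper redoes a by-hand computation in each case. Substantively: this congruence (the paper's version is $2\gamma - 2^{n-1} \equiv 0 \bmod 2^m$) determines the exponent only modulo $2^{m-1}$, leaving two candidates $2^{n-2}(\alpha+\beta)$ and $2^{n-2}(\alpha+\beta) + 2^{m-1}$ modulo $2^m$; the paper passes directly to the second candidate, whereas your central-involution argument --- the first candidate would force $z_0 = t^{2^{n-2}}$ to commute with $a$, $b$ and $c$, hence be central, although its image in $G/\Zeta(G)$ is a nontrivial element of the dihedral quotient --- is exactly the justification that jump requires, and it is indeed the only point where one uses that $\Zeta(G)$ is exactly $\gen{c}$. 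The step cannot be dispensed with: for $m = n = 2$ and $(\alpha,\beta) = (0,1)$, the rejected exponent $\gamma = 1$ yields $abab = b^2$, hence $ab = ba$, and the presented group is the abelian group $C_2 \times C_8$, which satisfies all the derived relations but has centre of order $16$ and is isomorphic to none of the three target groups; moreover, when $m = 1$ the congruence carries no information at all, so everything rests on this elimination. Finally, your closing order count --- the surjection from the presented group $X$ onto $G$ is an isomorphism because the presentation forces $|X| \le 2^{m+n}$ while $|G| = 2^{m+n}$ --- is left implicit in the paper but is needed to identify $G$ with $\Dih_{2^{m|n}}$, $\Quat_{2^{m|n}}$ or $\Semi_{2^{m|n}}$. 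In short: correct, same strategy, and your version supplies the steps the paper elides.
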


\begin{proof}
	Observe that $m\geq 1$, because finite non-trivial $p$-groups have non-trivial centers, and $n\geq 2$, because $G/\Zeta(G)$ cannot be a non-trivial cyclic group.
	It follows from the assumptions that there are elements $x$, $y$, $z$ generating $G$ and integers $0 \le \xi, \eta, \zeta < 2^m$ with
	\begin{equation*}
		x^2 = z^\xi,\ y^2 = z^\eta,\ (xy)^{2^{n - 1}} = z^\zeta,\ z^{2^m} = [x, z] = [y, z] = 1.
	\end{equation*}
	First, we shall replace the generators $x,y,z$ with new generators $a,b,c$ that are relatable via simpler parameters.
	Let $q_\xi$ and $r_\xi$ be the quotient and remainder of $\xi$ divided by $2$ so that
	\[ \xi = 2q_\xi + r_\xi \textup{ and } 0\leq r_\xi<2.\]
	Similarly, define $q_\eta$ and $r_\eta$.
	Replace the generators $x$, $y$, $z$ with
	\[ a = xz^{-q_\xi} ,\quad b = yz^{-q_\eta} ,\quad c = z \]
	and note that $a,b,c$ satisfy relations of the same form
	\begin{equation*}
		a^2 = c^\alpha,\quad b^2 = c^\beta,\quad (ab)^{2^{n - 1}} = c^\gamma,\quad c^{2^m} = [a, c] = [b, c] = 1
	\end{equation*}
	where the parameters' values range as $0 \le \alpha, \beta < 2$ and $0 \le \gamma < 2^m$;
	we assume, without loss of generality, that $\alpha \le \beta$.
	In particular, there are only three possibilities for $(\alpha, \beta)$.
	Suppose $(\alpha, \beta) = (0, 1)$, namely, $a^2 = 1$ and $b^2 = c$.
	Then
	\begin{align*}
		c^\gamma
		&= (ab)^{2^{n - 1}}
		= (ab)^{2^{n - 1}}a^2
		= a(ba)^{2^{n - 1}}a
		= a(abc^{-1})^{-2^{n - 1}}a \\
		&= a(ab)^{-2^{n - 1}}ac^{2^{n - 1}}
		= ac^{-\gamma}ac^{2^{n - 1}}
		= a^2c^{-\gamma + 2^{n - 1}}
		= c^{-\gamma + 2^{n - 1}},
	\end{align*}
	which yields $c^{2\gamma - 2^{n - 1}} = 1$.
	The element $c$ belonging to the center, we obtain that
	$2\gamma - 2^{n - 1} \equiv 0 \bmod 2^m$ and
	hence $\gamma \equiv 2^{m - 1} + 2^{n - 2} \bmod 2^m$.
	A similar argument can be applied, for $(\alpha, \beta) = (1, 1)$, to prove that
	$\gamma \equiv 2^{m - 1} + 2^{n - 1} \bmod 2^m$  and, for $(\alpha, \beta) = (0, 0)$, to show that
	$\gamma \equiv 2^{m - 1} \bmod 2^m$.
	This completes the proof.
\end{proof}

We will prove \cref{thm:2nd} following the next steps. 
We will first show that if $\FF G \cong \FF H$ and $G$ belongs to the family $\mathcal{X}$ of $2$-groups from \cref{thm:2nd}, then so does $H$.
Then we will use a number of invariants, to show that any two group bases associated to the same algebra possess the same invariants $(m,n)$; cf.\ \cref{thm:trichotomy}. To conclude, we will analyze different (incompatible) properties of the modular group algebras of  $\Dih_{2^{m|n}}$, $\Quat_{2^{m|n}}$, and $\Semi_{2^{m|n}}$ and leverage on \cref{thm:trichotomy} to positively solve the modular isomorphism problem for $\mathcal{X}$.

\begin{lem}
	\label{lem:cyclic}
	The center $\Zeta(G)$ is cyclic if and only if $| \Soc(G) | = p$.
\end{lem}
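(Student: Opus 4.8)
The plan is to recognize $\Soc(G)$ as the $p$-torsion subgroup of the abelian group $\Zeta(G)$ and then to read off both its order and the cyclicity of $\Zeta(G)$ directly from the classification of finite abelian $p$-groups. First I would recall that, by the conventions of \cref{sec:prel}, one has $\Soc(G) = \Omega^*_1(G) = \Omega_1(\Zeta(G))$. Since $\Zeta(G)$ is abelian, this is precisely the elementary abelian subgroup $\set{z \in \Zeta(G) \given z^p = 1}$ consisting of the elements of order dividing $p$.

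Next, by the classification of finite abelian $p$-groups, I would write $\Zeta(G) \cong \prod_{i = 1}^r \mathbb{Z}/p^{e_i}\mathbb{Z}$ for some integers $r \geq 1$ and $e_1 \geq \dots \geq e_r \geq 1$, where $r$ is the rank of $\Zeta(G)$. A direct computation in such a product then gives $\Omega_1(\Zeta(G)) \cong (\mathbb{Z}/p\mathbb{Z})^r$, whence $|\Soc(G)| = p^r$.

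Finally, $\Zeta(G)$ is cyclic if and only if $r = 1$, which by the previous paragraph is equivalent to $|\Soc(G)| = p$; this settles both implications of the stated equivalence at once. I expect no genuine obstacle here: the entire content of the lemma is the structure theorem for finite abelian $p$-groups (the same ingredient underlying \cref{prop:type}) applied to the abelian center of $G$, together with the defining identity $\Soc(G) = \Omega_1(\Zeta(G))$.
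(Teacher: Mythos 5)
Your proof is correct and follows essentially the same route as the paper: both identify $\Soc(G) = \Omega_1(\Zeta(G))$ and reduce the claim to elementary structure theory of finite abelian $p$-groups. The paper simply cites the equivalent classical fact that an abelian $p$-group with a unique subgroup of order $p$ is cyclic, whereas you make the rank computation via the classification explicit; the mathematical content is identical.
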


\begin{proof}
This follows from the fact that an abelian $p$-group that has a unique subgroup of order $p$ is cyclic.
\end{proof}

\begin{lem}
	\label{lem:capable}
	The quotient $G/\Zeta(G)$ is dihedral if and only if $| G : \Comm(G)\Zeta(G) | = 4$.
\end{lem}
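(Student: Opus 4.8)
The plan is to translate the index into the structure of the central quotient $Q := G/\Zeta(G)$ and then exploit the fact that $Q$ is capable. The key observation is the natural isomorphism $G/\Comm(G)\Zeta(G) \cong Q/\Comm(Q)$ already used after \cref{cor:invariants}: since $\Comm(Q) = \Comm(G)\Zeta(G)/\Zeta(G)$, the abelianization of $Q$ is exactly $G/\Comm(G)\Zeta(G)$, so the hypothesis $|G : \Comm(G)\Zeta(G)| = 4$ is precisely the statement that $Q/\Comm(Q)$ has order $4$. The forward implication is then immediate: if $Q$ is dihedral, that is, either the Klein four group or $\Dih_{2^k}$ for some $k \ge 3$, its abelianization is $C_2 \times C_2$ of order $4$, and the asserted index follows.

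For the converse, assume $|Q/\Comm(Q)| = 4$. Two general facts about central quotients do the preliminary work: $Q$ is capable, as it is realised as $G/\Zeta(G)$, and $Q$ is non-cyclic, because a group with cyclic central quotient is abelian whereas here $Q \neq 1$. A non-cyclic $2$-group whose abelianization has order $4$ cannot have $Q/\Comm(Q) \cong C_4$ --- that would force $Q/\Frat(Q) \cong C_2$ and hence $Q$ cyclic --- so $Q/\Comm(Q) \cong C_2 \times C_2$ and $Q$ is generated by two elements. The problem thus reduces to the purely group-theoretic claim that a capable, $2$-generated $2$-group $Q$ with $Q/\Comm(Q) \cong C_2 \times C_2$ is dihedral.

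This last reduction is the step I expect to be the real obstacle. The $2$-generated $2$-groups with abelianization $C_2 \times C_2$ form a strictly larger family than the dihedral ones --- it already contains the quaternion groups $\Quat_{2^k}$ and the semidihedral groups $\Semi_{2^k}$, as well as groups that need not be of maximal class --- so the numerical hypothesis alone does \emph{not} suffice, and capability must carry the weight. To conclude I would invoke the classification of capable $2$-groups, or equivalently compute epicenters: each non-dihedral candidate has non-trivial epicenter and is therefore not capable, leaving the dihedral groups as the only capable possibilities for $Q$. By contrast, the forward direction and the passage to the central quotient are entirely routine.
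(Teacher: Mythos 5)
Your reduction to the central quotient $Q = G/\Zeta(G)$ via $G/\Comm(G)\Zeta(G) \cong Q/\Comm(Q)$, the forward direction, and the elimination of the case $Q/\Comm(Q) \cong C_4$ all match the paper's proof, and you are right that capability is the decisive property in the converse. But the step you yourself flag as ``the real obstacle'' is a genuine gap, and your plan for closing it does not work as stated. Your description of the candidate family is incorrect: by Taussky's trichotomy theorem~\cite[Kapitel~III, Satz~11.9]{Hup67}, a non-abelian $2$-group whose abelianization has order $4$ is necessarily of maximal class, hence isomorphic to $\Dih_{2^{1+n}}$, $\Quat_{2^{1+n}}$ or $\Semi_{2^{1+n}}$; there are no groups in this family ``that need not be of maximal class''. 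This theorem is precisely the missing ingredient in your argument, since it pins down the converse to finitely many named families. Moreover, the tool you propose to invoke in its place --- ``the classification of capable $2$-groups'' --- does not exist in that generality: capable $p$-groups are not classified, and computing epicenters of ``each non-dihedral candidate'' is not a proof when the set of candidates has never been identified.

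The paper's proof runs exactly along your skeleton but fills this hole: after disposing of the abelian case (where $Q$ must be the Klein four group, which the paper's convention counts as dihedral), it applies Taussky's theorem to conclude that $Q$ is dihedral, generalized quaternion or semidihedral, and then quotes \cite[Lemma~3.1]{HS64}, which states that $\Quat_{2^{1+n}}$ and $\Semi_{2^{1+n}}$ are incapable of being central quotients. So your final appeal to incapability is the right one, but without Taussky's theorem the groups to which that criterion must be applied are never determined, and the proof is incomplete.
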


\begin{proof}
We will make use of the following basic fact:
\begin{equation*}
	G/\Comm(G)\Zeta(G) \cong (G/\Zeta(G))/\Comm(G/\Zeta(G)).
\end{equation*}
If $G/\Zeta(G)$ is dihedral, then $| G : \Comm(G)\Zeta(G) | = 4$ follows immediately.
Suppose that $| G : \Comm(G)\Zeta(G) | = 4$.
If the central quotient $G/\Zeta(G)$ is abelian, then it cannot be cyclic and is thus isomorphic to the Klein four group $\Dih_4$.
We assume now that $G/\Zeta(G)$ is non-abelian.
By Taussky's trichotomy theorem~\cite[Kapitel~III, Satz~11.9]{Hup67}, the group $G/\Zeta(G)$ is isomorphic to the dihedral group $\Dih_{2^{1 + n}}$ or the generalized quaternion group $\Quat_{2^{1 + n}}$, for $n \ge 2$, or the semidihedral group $\Semi_{2^{1 + n}}$, for $n \ge 3$.
Moreover, \cite[Lemma~3.1]{HS64} ensures that $\Quat_{2^{1 + n}}$ and $\Semi_{2^{1 + n}}$ are incapable of being central quotients, so $G/\Zeta(G)$ must be dihedral.
\end{proof}

For the proof of \cref{thm:2nd} we will use dimension subgroups, but will avoid presenting here an extensive description of their properties. We refer to \cite[Section~1]{HS06} or \cite[Section~2.3]{MS21} for more details and references. For a positive integer $n$, the $n$-th \emph{dimension subgroup} $D_n(G)$ of $G$ is defined as
\[D_n(G) = G \cap (1 + \Ideal(\FF G)^n). \]
Jennings showed that the dimension subgroups form a $p$-restricted $N$-series, i.e. that for any pair of indices $n$ and $m$ one has
\begin{equation}\label{eq:Jen}
[D_n(G), D_m(G)] \subseteq D_{n+m}(G) \textup{ and } \Agemo_1(D_n(G)) \subseteq D_{np}(G).
\end{equation}
Moreover, the \emph{dimension subgroups series}, also called the \emph{Jennings series}, is the fastest descending series of subgroups in $G$ satisfying \eqref{eq:Jen}.
If $p = 2$, for instance,  the first few terms are
\begin{align*}
	D_1(G) = G, \quad D_2(G) = \Frat(G), \quad D_3(G) = [\Frat(G), G]\Agemo_1(\Frat(G)).
\end{align*}
Moreover, for every choice of $n$, the quotient $D_n(G)/D_{n+1}(G)$ is elementary abelian and can be viewed as a vector space over $\F_p$.
It was also proven by Jennings that, given $g\in G$, one has $g-1 \in \Ideal(\FF G)^n$ if and only if $g \in D_n(G)$ and that, if $g_1,\dotsc,g_d$ form a basis of $D_n(G)/D_{n+1}(G)$, then $g_1-1,\dotsc,g_d-1$ are linearly independent in $\Ideal(\FF G)^n/\Ideal(\FF G)^{n+1}$. Moreover, after subtracting $1$, any basis of $D_1(G)/D_2(G) = G/\Frat(G)$ forms a basis of $\Ideal(\FF G)/\Ideal(\FF G)^2$.

In what follows we will apply the following identities, holding for $g,h\in G$:
\begin{align*}
(gh-1)\ =\  &(g-1) + (h-1) + (g-1)(h-1), \\
(h-1)(g-1)\  =\ & (g-1)(h-1) + \\
&(1 + (g-1) + (h-1) + (g-1)(h-1))([h,g]-1).
\end{align*}
We remark that, the element $[h,g]-1$ lying in $\Ideal(\FF G)^2$, we have
\begin{equation}\label{eq:CommCongruence}
(h-1)(g-1) \equiv (g-1)(h-1) + ([h,g] -1) \bmod\Ideal(\FF G)^3.
\end{equation}
In the upcoming proof of \cref{thm:2nd} we will sometimes omit to explicitly reference to these identities.

\begin{proof}[Proof of \cref{thm:2nd}]
Assume $\FF G \cong \FF H$.
It follows from the assumptions that $G$ belongs to the family
\[\mathcal{X}=\set{X \given X \textup{ finite $2$-group, } \Zeta(X) \textup{ cyclic, } X/\Zeta(X) \textup{ dihedral }}.\]
\cref{lem:cyclic,lem:capable,cor:invariants} guarantee that $H$ also belongs to $\mathcal{X}$.
From \cref{thm:trichotomy} we know that $G$ is isomorphic to one of $\Dih_{2^{m|n}}$, $\Quat_{2^{m|n}}$ or $\Semi_{2^{m|n}}$, for uniquely determined values of $m\geq 1$ and $n\geq 2$.
As $|G|=|H|$, we deduce from \cref{thm:WS,thm:trichotomy} that $H$ is isomorphic to one of $\Dih_{2^{m|n}}$, $\Quat_{2^{m|n}}$ or $\Semi_{2^{m|n}}$, too.

Assume first that $m = 1$. Then $G$ and $H$ are $2$-groups of maximal class and the modular isomorphism problem for these groups has been positively solved in \cite[p. 434]{Car77} and \cite[Theorem 1]{Bag92}; see also \cite{RV13} for the prime field case.

We now assume that $m > 1$. We write $\dg(G) = \log_p |G:\Frat(G)|$, which is an invariant by \cref{prop:Frat}. Since $m > 1$, we have $\Frat(\Dih_{2^{m|n}}) = \langle c^2, [b, a] \rangle$, $\Frat(\Quat_{2^{m|n}}) = \langle c, [b, a] \rangle$ and $\Frat(\Semi_{2^{m|n}}) = \langle c, [b, a] \rangle$, from which we derive $$\dg(\Dih_{2^{m|n}}) = 3 \textup{ while }\dg(\Quat_{2^{m |n}}) = \dg(\Semi_{2^{m|n}}) = 2.$$
 It follows that either $G$ and $H$ are both isomorphic to $\Dih_{2^{m|n}}$ or they are both $2$-generated. We assume that neither $G$ nor $H$ is isomorphic to $\Dih_{2^{m|n}}$. Moreover, as $\Quat_{2^{m|2}} \cong \Semi_{2^{m|2}}$, we assume
  that $n > 2$ and, for a contradiction, that
  $$G = \Quat_{2^{m|n}} \textup{ and } H = \Semi_{2^{m|n}}.$$
To produce the desired contradiction, we will show that $\Omega_1(\FF G) \subseteq \Ideal(\FF G)^2$, while this is not the case if we replace $\FF G$ by $\FF H$.

As $a$ and $b$ span $G/\Frat(G)$,
we write a generic element $x$ in $\Omega_1(\FF G)$ as
\[x \equiv \alpha (a-1) + \beta (b-1) \bmod \Ideal(\FF G)^2 \]
where $\alpha, \beta \in \FF$. It follows from \cref{lem:dream} and  \eqref{eq:CommCongruence} that
\begin{equation}\begin{split}\label{eq:square}
0 = x^2 &\equiv (\alpha^2 +\beta^2) (c-1) + \alpha\beta((a-1)(b-1) + (b-1)(a-1)), \\
 &\equiv (\alpha^2 +\beta^2) (c-1) + \alpha\beta ([b,a]-1) \bmod \Ideal(\FF G)^3.
\end{split}
\end{equation}
Set $P = G/D_3(G)$, which is the largest quotient $Q$ of $G$ satisfying $D_3(Q) = 1$.
Note that $[a, b]^{2^{n - 2}} = c^{2^{m - 1}}$ holds in $G$.
Since $m > 1$ and $n > 2$, we obtain
\begin{align*}
	P \cong \gen{ a, b, c \given a^2 = c,\ b^2 = c,\ [b, a]^2 = c^2 = [a, c] = [b, c] = 1 },
\end{align*}
which is a group of order $16$.
Then we have
\begin{align*}
	D_1(G)/D_2(G) & \cong D_1(P)/D_2(P) \cong P/\Frat(P),\\
	D_2(G)/D_3(G) & \cong D_2(P)/D_3(P) \cong \Frat(P)
\end{align*}
and both $P/\Frat(P)$ and $\Frat(P)$ have rank $2$; cf.\ \cref{fig:filt} for a choice of basis.

\begin{figure}[htbp]
	\centering
	\begin{tabular}{ccc}
		$D_1(G)/D_2(G)$ &  & $a, \ b\ \bmod D_2(G)$ \\
		$D_2(G)/D_3(G)$ & & $c,\ [b,a]\ \bmod D_3(G)$ \\
	\end{tabular}
	\caption{Quotients of the dimension subgroups and their basis.}	
	\label{fig:filt}
\end{figure}

\noindent
It follows that $c-1$ and $[b,a]-1$ are linearly independent modulo $\Ideal(\FF G)^3$.
Hence \eqref{eq:square} yields that $\alpha \beta = \alpha^2 + \beta^2 = 0$, which itself implies $\alpha = \beta = 0$. We have proven that  $x \in \Ideal(\FF G)^2$.
To conclude, note that, in $\FF H$, the element $a-1$ lies in $\Omega_1(\FF H)$ but not in $\Ideal(\FF H)^2$, because $a \notin D_2(H) = \Frat(H)$. This shows that $\FF G \not\cong \FF H$, a contradiction.
\end{proof}

\begin{cor}\label{cor:2nd}
Assume that $G$ satisfies
\begin{equation*}
	| G : \Comm(G)\Zeta(G)| = 4 \text{ and } |\Soc(G) \cap \Frat(G)| = 2.
\end{equation*}
Then the modular isomorphism problem has a positive solution for $G$.
\end{cor}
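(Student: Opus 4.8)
The plan is to deduce \cref{cor:2nd} from \cref{thm:2nd} by passing to an elementary decomposition of $G$ and recognizing the two hypotheses as conditions on the non-elementary-abelian factor. First I would fix an elementary decomposition $G = T \times U$ with $T$ elementary abelian, which exists by \cref{lem:exists}. Since $T$ is a \emph{maximal} elementary abelian direct factor, the factor $U$ admits no nontrivial elementary abelian direct factor; by the argument of \cref{lem:exists} applied to $U$ itself this forces $\Soc(U) \subseteq \Frat(U)$. Using the equalities $\Soc(G) = T \times \Soc(U)$ and $\Frat(G) = \Frat(U)$ from \cref{lem:exists}, together with $T \cap U = 1$, I would then compute
\[
	\Soc(G) \cap \Frat(G) = \Soc(U) \cap \Frat(U) = \Soc(U).
\]
Hence the hypothesis $|\Soc(G) \cap \Frat(G)| = 2$ becomes $|\Soc(U)| = 2$, and \cref{lem:cyclic} yields that $\Zeta(U)$ is cyclic.

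Next I would translate the second hypothesis to $U$. Because $T$ is central and abelian, we have $\Comm(G) = \Comm(U)$ and $\Zeta(G) = T \times \Zeta(U)$, so that $\Comm(G)\Zeta(G) = T \times \Comm(U)\Zeta(U)$ and therefore $G/\Comm(G)\Zeta(G) \cong U/\Comm(U)\Zeta(U)$. Thus the hypothesis $|G : \Comm(G)\Zeta(G)| = 4$ reads $|U : \Comm(U)\Zeta(U)| = 4$, and \cref{lem:capable} shows that $U/\Zeta(U)$ is dihedral. At this point $U$ has cyclic center and dihedral central quotient, so \cref{thm:2nd} gives that the modular isomorphism problem has a positive answer for $U$.

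Finally I would invoke the reduction theorem. Assuming $\FF G \cong \FF H$, I fix an elementary decomposition $H = S \times V$ with $S$ elementary abelian, so that \cref{thm:reduction} provides the isomorphisms $\FF T \cong \FF S$ and $\FF U \cong \FF V$. As $T$ and $S$ are abelian, the positive solution of the modular isomorphism problem for abelian groups gives $T \cong S$; as the problem holds for $U$ by the previous paragraph, applying it to $\FF U \cong \FF V$ forces $U \cong V$. Combining the two isomorphisms yields $G = T \times U \cong S \times V = H$, which proves the corollary. Note that this argument never needs to verify the hypotheses for $H$ directly: all the information about $H$ is extracted through the reduction theorem and the already-established positive answer for $U$.

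The only genuinely delicate point is the first translation step, namely the identity $\Soc(G) \cap \Frat(G) = \Soc(U)$, which rests on the maximality of $T$ (so that $U$ has no elementary abelian direct factor and hence $\Soc(U) \subseteq \Frat(U)$); once this is in place, both hypotheses collapse to the hypotheses of \cref{thm:2nd} on $U$ and the reduction theorem does the remaining work, so I do not anticipate any serious obstacle.
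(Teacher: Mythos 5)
Your proposal is correct and follows essentially the same route as the paper's own proof: fix an elementary decomposition $G = T \times U$, use \cref{lem:exists} to get $|\Soc(U)| = 2$ and $|U : \Comm(U)\Zeta(U)| = 4$, apply \cref{lem:cyclic,lem:capable} to place $U$ in the scope of \cref{thm:2nd}, and finish with the reduction theorem. The only differences are presentational: you spell out details the paper leaves implicit, notably the identity $\Soc(G) \cap \Frat(G) = \Soc(U)$ via $\Soc(U) \subseteq \Frat(U)$, and you invoke \cref{thm:reduction} (with the abelian case for $T \cong S$ and the established case for $U \cong V$) where the paper cites \cref{main:reduction} more tersely.
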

\begin{proof}
	Let $G = T \times U$ be an elementary decomposition of $G$ with $T$ elementary abelian.
	Then $| U : \Comm(U)\Zeta(U)| = 4$ and \cref{lem:exists} yields $|\Soc(U)| = 2$.
	\Cref{lem:cyclic,lem:capable} show that $U$ has cyclic center and dihedral central quotient.
	By \cref{main:reduction,thm:2nd}, the modular isomorphism problem has a positive answer.
\end{proof}

We conclude the present section and the paper with a note on the similarities between our (positive) examples and the counterexample found in \cite{GLMdR22}.

\begin{rmk}\label{rmk:counterex}
The counterexample to the modular isomorphism problem \cite{GLMdR22} shares the following properties with our applications:
\begin{itemize}
\item $|G:\Zeta(G)| = 8$ and class $3$, in \cref{thm:1st},
\item $G/\Zeta(G)$ dihedral, in \cref{thm:2nd}.
\end{itemize}
\end{rmk}

\end{document}